\definecolor{dog}{rgb}{0.30,0.40,0.15}
\numberwithin{equation}{section}
\definecolor{col1}{rgb}{0.6, 0.7, 0.8}
\definecolor{col2}{rgb}{0.7, 0.8, 0.65}
\definecolor{col3}{rgb}{0.8, 0.9, 0.5}
\definecolor{col4}{rgb}{0.91,0.94, 0.53}
\definecolor{col5}{rgb}{0.98,0.99,0.6}
\definecolor{c4}{rgb}{0.58, 0.69, 0.62}
\definecolor{c3}{rgb}{0.64, 0.76, 0.68}
\definecolor{c2}{rgb}{0.74, 0.86, 0.78}
\definecolor{c1}{rgb}{0.84, 0.96, 0.88}
\definecolor{bondiblue}{rgb}{0.0, 0.58, 0.71}
\definecolor{d1}{rgb}{0.6, 0.6, 0.6}
\definecolor{d2}{rgb}{0.35, 0.68, 0.91}
\definecolor{d3}{rgb}{0.43, 0.62, 0.89}
\definecolor{d4}{rgb}{0.43, 0.62, 0.89}
\definecolor{d5}{rgb}{0.49, 0.67, 0.84}
\definecolor{d6}{rgb}{0.43, 0.62, 0.89}
\definecolor{d7}{rgb}{0.35, 0.68, 0.91}
\definecolor{d8}{rgb}{0.43, 0.62, 0.89}
\definecolor{d2}{rgb}{0.3, 0.68, 0.8}
\definecolor{pd}{rgb}{0.8,0.1,0.1}
\definecolor{textcol}{rgb}{0.37,0,0.57}
\newtheorem{thm}{Theorem}[section]
\newtheorem{cor}[thm]{Corollary}
\newtheorem{lemma}[thm]{Lemma}
\newtheorem{proposition}[thm]{Proposition}
\title{Free probability via entropic optimal transport}
\author{Octavio Arizmendi and Samuel G. G. Johnston}
\subjclass{Primary: 46L54, 60F10. Secondary: 22C05, 28C10, 49Q22.}
\keywords{Quadrature formulas, Finite Free Probability, characteristic polynomial, Free Probability, large deviation principle, permuton, symmetric group, unitary group, Calculus of Variations, Optimal transport, relative entropy.}
\begin{document}

\maketitle

\begin{abstract}
Let $\mu$ and $\nu$ be probability measures on $\mathbb{R}$ with compact support, and let $\mu \boxplus \nu$ denote their additive free convolution.
We show that for $z \in \mathbb{R}$ greater than the sum of essential suprema of $\mu$ and $\nu$, we have
\begin{equation*}
\int_{-\infty}^\infty \log(z - x) \mu \boxplus \nu (\mathrm{d}x) = \sup_{\Pi} \left\{ \mathbf{E}_\Pi[\log(z - (X+Y)] - H(\Pi|\mu \otimes \nu) \right\},
\end{equation*}
where the supremum is taken over all couplings $\Pi$ of the probability measures $\mu$ and $\nu$, and $H(\Pi|\mu \otimes \nu)$ denotes the relative entropy of a coupling $\Pi$ against product measure. 
We prove similar formulas for the multiplicative free convolution $\mu \boxtimes \nu$ and the free compression $[\mu]_\tau$ of probability measures, as well as for multivariate free operations.
Thus the integrals of a log-potential against the fundamental measure operations of free probability may be formulated in terms of entropic optimal transport problems.
The optimal couplings in these variational descriptions of the free probability operations can be computed explicitly, and from these we can then deduce the standard $R$- and $S$-transform descriptions of additive and multiplicative free convolution. 
We use our optimal transport formulations to derive new inequalities relating free and classical operations on probability measures, such as the inequality
\begin{equation*}
\int_{-\infty}^\infty \log(z - x) \mu \boxplus \nu (\mathrm{d}x) \geq \int_{-\infty}^{\infty} \log(z-x) \mu \ast \nu( \mathrm{d}x)
\end{equation*}
relating free and classical convolution.
Our approach is based on applying a large deviation principle on the symmetric group to the quadrature formulas of Marcus, Spielman and Srivastava.
\end{abstract}


\section{Introduction and overview}

\subsection{The basic operations of free probability}
In the late 1980s and 1990s, Voiculescu pioneered the field of free probability \cite{Vcon1, Vcon2, Vcon3, Vent1, Vent2, Vent3}. From the concrete perspective of random matrix theory, free probability describes how the spectra of large independent random matrices interact under the basic matrix operations: matrix addition, matrix multiplication, and taking matrix minors. 

With a view to outlining some of the central results of free probability here, we say an $N \times N$ random matrix $A_N$ is unitarily invariant if $A_N$ has the same law as $U_NA_NU_N^{-1}$, where $U_N$ is a Haar distributed unitary random matrix. 
In the following, consider sequences $(A_N)_{N \geq 1}$ and $(B_N)_{N \geq 1}$ of independent unitarily invariant random Hermitian matrices. 
Suppose further that the empirical spectra of these sequences converge in the weak topology to probability measures $\mu$ and $\nu$ on the real line in the sense that 
\begin{equation} \label{eq:conv00}
\frac{1}{N} \sum_{i=1}^N \delta_{\lambda_i(A_N)} \to \mu  \qquad \text{and} \qquad \frac{1}{N} \sum_{i=1}^N \delta_{\lambda_i(B_N)} \to \nu,
\end{equation}
where for $i=1,\ldots,N$, $\lambda_i(C_N)$ denote the eigenvalues of an $N \times N$ matrix $C_N$.
Free probability grants descriptions of the asymptotic empirical spectra of sums, products, and minors of sequences of random matrices $(A_N)_{N \geq 1}$ and $(B_N)_{N \geq 1}$ satisfying \eqref{eq:conv00}.

The first free probability operation we discuss, \textbf{additive free convolution}, takes two probability measures $\mu$ and $\nu$ and outputs a new probability measure $\mu \boxplus \nu$ on the real line. This new probability measure has the property that, under \eqref{eq:conv00}, the (random) empirical spectrum of $A_N + B_N$ converges almost-surely to $\mu \boxplus \nu$ (in the weak topology), i.e.\ 
\begin{align} \label{eq:aconv}
\frac{1}{N} \sum_{ i = 1}^N \delta_{\lambda_i( A_N + B_N ) } \to  \mu \boxplus \nu \qquad \text{$A_N,B_N$ indep., unitarily inv., and satisfy \eqref{eq:conv00}.}
\end{align} 
The second operation, \textbf{multiplicative free convolution}, describes the asymptotic empirical spectrum of the product of random matrices. 
Here we find that in parallel to \eqref{eq:aconv}, provided $\nu$ is supported on $[0,\infty)$ we have the almost-sure convergence
\begin{align} \label{eq:mconv}
\frac{1}{N} \sum_{ i = 1}^N \delta_{\lambda_i(A_NB_N)} \to  \mu \boxtimes \nu \qquad \text{$A_N,B_N$ indep., unitarily inv., and satisfy \eqref{eq:conv00}.}
\end{align} 
The final operation, \textbf{free compression} $[\mu]_\tau$, describes the asymptotic spectra of minors of random matrices. Namely, given an $N \times N$ matrix $A_N$, write $[A_N]_k$ for the principal $k \times k$ minor (i.e.\ top-left corner) of $A_N$. Then for $\tau \in (0,1]$, there is a probability measure $[\mu]_\tau$ such that we have the almost-sure convergence
\begin{align} \label{eq:cconv}
\frac{1}{k} \sum_{ i = 1}^k \delta_{\lambda_i([A_N]_k)} \to  [\mu]_\tau \qquad \text{$k = k_N = \lfloor \tau N\rfloor$, $A_N$ unitarily inv., and satisfies \eqref{eq:conv00}},
\end{align} 
where $\lfloor \tau N\rfloor$ refers to the greatest integer less than $\tau N$. Our notation $[\mu]_\tau$ here is unconventional. Usually a different notation is used based on a formulation of free compression in terms of additive free convolution; see Section \ref{sec:compr}.

\subsection{Characterisations of the free probability operations}
The three fundamental operations of free probability, $\mu \boxplus \nu$, $\mu \boxtimes \nu$, and $[\mu]_\tau$, are somewhat difficult to describe. Take for instance the additive free convolution $\mu \boxplus \nu$ of two probability measures. Perhaps the most direct way to describe additive free convolution is as follows. Given a probability measure $\pi$ on $\mathbb{R}$ with compact support, define the Cauchy transform $G_\pi: \mathbb{C} - \mathrm{supp}(\pi) \to \mathbb{C}$ of $\pi$ by 
\begin{align} \label{eq:steel}
G_\pi(z) := \int_{-\infty}^\infty \frac{1}{z-x} \pi(\mathrm{d}x).
\end{align}
Now define the $R$-transform of the measure $\pi$ by setting
\begin{align*}
R_\pi(s) := - \frac{1}{s} + G_\pi^{-1}(s),
\end{align*}
for sufficiently small $s$. Here $G_\pi^{-1}$ is the (locally defined) inverse function of $G_\pi$. The additive free convolution $\mu \boxplus \nu$ is then the probability measure on the real line satisfying
\begin{align} \label{eq:Rtran}
R_{\mu \boxplus \nu}(s) = R_\mu(s) + R_\nu(s).
\end{align}

There is an analogous (albeit slightly more complicated) description of the multiplicative free convolution $\mu \boxtimes \nu$, and for free compression $[\mu]_\tau$, there is a description in terms of the additive free convolution.

Alternatively, the basic operations of free probability can also be described in combinatorial terms \cite{NS}, with the notion of free cumulants playing an analogous role to that of classical cumulants in classical probability. 
Nonetheless, these descriptions of the basic operations of free probability are somewhat indirect. 

\subsection{Entropic optimal transport}
The main result of the present article offers a new characterisation of $\mu \boxplus \nu$, $\mu \boxtimes \nu$ and $[\mu]_\tau$ in terms of \textbf{entropic optimal transport} \cite{LMS, NW, BGN, nutz}.

We now run through the very basics of optimal transport \cite{villani}. Recall that we say a probability measure $\Pi$ on $\mathbb{R}^2$ is a \textbf{coupling} of two probability measures $\mu$ and $\nu$ on $\mathbb{R}$ if its pushforward measures under projections onto the coordinate axes are given by $\mu$ and $\nu$ respectively. Namely,
\begin{align*}
\Pi ( E \times \mathbb{R} ) = \mu(E) \qquad \text{and} \qquad \Pi( \mathbb{R} \times F ) = \nu(F)
\end{align*}
for all Borel subsets $E$ and $F$ of $\mathbb{R}$. The simplest example of a coupling is product measure $\mu \otimes \nu$, i.e.\ $(\mu \otimes \nu)( E \times F ) = \mu(E) \nu(F)$. Given a coupling $\Pi$, we write $(X,Y)$ for the $\mathbb{R}^2$-valued random variable associated with this coupling.

Given a measurable function $c:\mathbb{R}^2 \to \mathbb{R}$, the classical Monge-Kantorovich problem is to solve the minimisation and maximisation problems
\begin{align*}
\inf_{ \Pi } \mathbf{E}_\Pi[ c(X,Y) ] \qquad \text{and} \qquad \sup_{ \Pi} \mathbf{E}_\Pi[c(X,Y)]
\end{align*}
where the infimum and supremum run over all couplings of a given pair of probability measures $\mu$ and $\nu$. 
Perhaps the most natural example is taking $c(x,y) = |y-x|$ in the inf setting; in this case the problem represents the distance-minimising way of transporting the mass of a measure $\mu$ to that of $\nu$. Another example in the infimum setting is $c(x,y) = \mathrm{1}_{y\neq x}$; with this example the infimum coincides
with the total variation distance between the measures $\mu$ and $\nu$.  

In \emph{entropic} optimal transport, one introduces an entropy cost to the Monge-Kantorovich problem. 
To this end, given two probability measures $\Pi$ and $\Pi'$ on a measurable space $E$, if $\Pi'$ is absolutely continuous with respect to $\Pi$, define the \textbf{relative entropy} (also known as the Kullback-Leibler divergence) by
\begin{align} \label{eq:kl}
H(\Pi'|\Pi) := \int_E  \log \frac{ \mathrm{d}\Pi'}{\mathrm{d}\Pi} ~\mathrm{d}\Pi'.
\end{align}
If $\Pi'$ is not absolutely continuous with respect to $\Pi$, set $H(\Pi'|\Pi)=+\infty$. We have $H(\Pi'|\Pi) \geq 0$, with equality if and only if $\Pi' = \Pi$. 

Given a measurable function $c:\mathbb{R}^2 \to \mathbb{R}$ and a parameter $\kappa > 0$, the entropic optimal transport problem is then to solve the maximisation problem
\begin{align*}
\sup_\Pi \left\{ \mathbf{E}_\Pi [c(X,Y)] - \kappa H(\Pi|\mu \otimes \nu) \right\},
\end{align*}
where again the supremum runs over all couplings $\Pi$ of probability measures $\mu$ and $\nu$.

\subsection{Main result}
Given a measure $\mu$ on $\mathbb{R}$, let $[E_\mu^-,E_\mu^+]$, with $-\infty \leq E_\mu^- \leq E_\mu^+ \leq +\infty$, denote the smallest closed interval containing the support of $\mu$. Thus $\mu$ has compact support if and only if $-\infty < E_\mu^- \leq E_\mu^+ < \infty$. 

Our main result states that the integral of a log-potential against each of the measures $\mu \boxplus \nu$, $\mu \boxtimes \nu$, and $[\mu]_\tau$ can be expressed in terms of an entropic optimal transport problem: 

\begin{thm}\label{thm:main}
Let $\mu$ and $\nu$ be probability measures on the real line with compact support. Then for $z > E_\mu^+ + E_\nu^+$ we have 
\begin{align} \label{eq:max1}
\int_{-\infty}^\infty \log(z-x) ( \mu \boxplus \nu )(\mathrm{d}x) =\sup_{\Pi} \left\{ \mathbf{E}_\Pi [ \log (z - (X+Y) ] - H(\Pi|\mu \otimes \nu) \right\}.
\end{align}
If we additionally suppose that $\nu$ is supported on $[0,\infty)$, then whenever $z > E_\mu^+E_\nu^+$ we have 
\begin{align} \label{eq:max2}
\int_{-\infty}^\infty \log(z-x)  (\mu \boxtimes \nu) (\mathrm{d}x)  =\sup_{\Pi} \left\{ \mathbf{E}_\Pi [ \log (z - XY) ] -H(\Pi|\mu \otimes \nu) \right\}.
\end{align}
In both \eqref{eq:max1} and \eqref{eq:max2}, the supremum is taken over all couplings $\Pi$ of $\mu$ and $\nu$.

Finally, for $z > E_\mu^+$ we have 
\begin{align} \label{eq:max3}
\tau \int_{-\infty}^\infty  \log(z-x)  [\mu]_\tau (\mathrm{d}x) =\sup_{\Pi} \left\{ \mathbf{E}_\Pi [ \log (z - X)\mathrm{1}_{Y > 1-\tau} ] - H(\Pi|\mu \otimes \nu_{\mathrm{uni}}) \right\},
\end{align}
where $\nu_{\mathrm{uni}}$ refers to the Lebesgue measure on $[0,1]$, and the supremum is taken over all couplings $\Pi$ of $\mu$ and $\nu_{\mathrm{uni}}$.
\end{thm}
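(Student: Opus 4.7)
The plan is to apply the Marcus--Spielman--Srivastava (MSS) quadrature formulas for the expected characteristic polynomials of sums, products, and minors of unitarily invariant Hermitian matrices, and then pass to the $N\to\infty$ limit using a large deviation principle for uniform random permutations on $S_N$. The three parts of Theorem \ref{thm:main} are parallel, so I will describe the plan for the compression case \eqref{eq:max3} in detail and indicate the modifications for \eqref{eq:max1}--\eqref{eq:max2} at the end.

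Let $A_N = U_N D_N U_N^*$ be unitarily invariant Hermitian with eigenvalues $\lambda_1,\ldots,\lambda_N$ whose empirical measure converges weakly to $\mu$, and let $k = \lfloor\tau N\rfloor$. A computation using $\mathbf{E}[\det A_{[j][j]}] = \binom{N}{j}^{-1}e_j(\lambda)$ (Weingarten) on the coefficients of $\chi_{[A_N]_k}$ gives the MSS quadrature
\begin{equation*}
\mathbf{E}[\chi_{[A_N]_k}(z)] \;=\; \frac{1}{N!}\sum_{\sigma\in S_N}\prod_{i=N-k+1}^{N}(z-\lambda_{\sigma(i)}).
\end{equation*}
For $z > E_\mu^+$ each factor is eventually positive, so after taking logs the identity takes the Gibbsian form
\begin{equation*}
\tfrac{1}{N}\log\mathbf{E}[\chi_{[A_N]_k}(z)] \;=\; \tfrac{1}{N}\log\mathbf{E}_\sigma\exp\!\Bigl(\textstyle\sum_{i=N-k+1}^{N}\log(z-\lambda_{\sigma(i)})\Bigr),
\end{equation*}
with $\sigma$ uniform on $S_N$. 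Concentration of the empirical spectrum of $[A_N]_k$ around $[\mu]_\tau$ (standard for unitarily invariant ensembles) together with a Jensen/domination argument exploiting $z>E_\mu^+$ gives that this left-hand side converges to $\tau\int\log(z-x)[\mu]_\tau(\mathrm{d}x)$.

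For the right-hand side I would invoke the large deviation principle of Trashorras (see also Starr; Kenyon--Kr\'al'--Radin--Winkler) for the permuton $\pi_\sigma := \frac{1}{N}\sum_i \delta_{(i/N,\sigma(i)/N)}$ of a uniform $\sigma\in S_N$: at speed $N$, it has good rate function $H(\pi\,|\,\mathrm{Leb}_{[0,1]^2})$ on the set of permutons (couplings of Lebesgue with itself on $[0,1]^2$). Setting $g(x,y) := \log(z - F_\mu^{-1}(y))\mathbf{1}_{x>1-\tau}$, with $F_\mu^{-1}$ the quantile of $\mu$, Varadhan's lemma then yields
\begin{equation*}
\lim_N \tfrac{1}{N}\log\mathbf{E}_\sigma e^{N\int g\,\mathrm{d}\pi_\sigma} \;=\; \sup_{\pi}\Bigl\{\textstyle\int g\,\mathrm{d}\pi \;-\; H(\pi\,|\,\mathrm{Leb}_{[0,1]^2})\Bigr\}.
\end{equation*}
Finally, the change of variables $T(x,y)=(F_\mu^{-1}(y),x)$ pushes $\mathrm{Leb}_{[0,1]^2}$ to $\mu\otimes\nu_{\mathrm{uni}}$, hence sends permutons to couplings $\Pi$ of $\mu$ and $\nu_{\mathrm{uni}}$ and preserves relative entropy: $H(\pi\,|\,\mathrm{Leb})=H(\Pi\,|\,\mu\otimes\nu_{\mathrm{uni}})$ and $\int g\,\mathrm{d}\pi=\mathbf{E}_\Pi[\log(z-X)\mathbf{1}_{Y>1-\tau}]$. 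Combining the previous paragraphs yields \eqref{eq:max3}.

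The additive and multiplicative parts \eqref{eq:max1}--\eqref{eq:max2} follow from the same template, starting from the MSS identities $\mathbf{E}[\chi_{A_N+B_N}(z)]=\frac{1}{N!}\sum_\sigma\prod_i(z-a_i-b_{\sigma(i)})$ and (for $\nu$ supported on $[0,\infty)$) $\mathbf{E}[\chi_{A_NB_N}(z)]=\frac{1}{N!}\sum_\sigma\prod_i(z-a_ib_{\sigma(i)})$, where $a_i,b_j$ are eigenvalues of $A_N,B_N$, with costs $g(x,y)=\log(z-F_\nu^{-1}(x)-F_\mu^{-1}(y))$ and $\log(z-F_\nu^{-1}(x)F_\mu^{-1}(y))$, and change of variables $T=(F_\nu^{-1},F_\mu^{-1})$ on both coordinates. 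The principal obstacle is Varadhan's lemma with an $N$-dependent and potentially unbounded cost $g_N$: I would address this via truncation $g\wedge M$ together with exponential tightness from the permuton LDP for the upper bound, approximation of empirical quantiles by $F_\mu^{-1}$ (resp.\ $F_\nu^{-1}$) using weak convergence of spectra, and the strict separations $z>E_\mu^+$, $z>E_\mu^++E_\nu^+$, $z>E_\mu^+E_\nu^+$ to secure uniform boundedness of $\log(z-\cdot)$ on the limiting supports. A secondary subtlety is closing the gap between $\tfrac{1}{N}\log\mathbf{E}[\chi]$ and $\int\log(z-x)$ against the limiting spectral measure, which requires exponential concentration of the log-determinant of the relevant random matrix away from the support.
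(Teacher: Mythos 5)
Your overall architecture matches the paper's (quadrature formulas $\to$ permuton large deviations $\to$ Varadhan), but the details diverge in one place and contain a gap in another.

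On the unitary side you argue that $\tfrac{1}{N}\log\mathbf{E}[\det(z-\cdot)]\to\int\log(z-x)\,d(\mu\boxplus\nu)$ follows from concentration of the empirical spectrum plus a Jensen/domination argument. Jensen gives only the lower bound; the matching upper bound genuinely requires a large deviation estimate at speed $N^2$, so that the $e^{O(N)}$ size of the determinant is exponentially killed on the bad event. The phrase ``standard for unitarily invariant ensembles'' elides this: the needed $N^2$-rate bounds for sums come from Belinschi--Guionnet--Huang and Narayanan--Sheffield (with a Gelfand--Tsetlin analogue for minors), which are recent and substantial inputs. This is precisely the \emph{alternative} route the paper sketches in Section~4.3. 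Their primary argument is different and cleaner: by finite free probability (Arizmendi--Perales, Arizmendi--Garza-Vargas--Perales, Marcus) the roots of $\mathbf{E}[\det(z-\cdot)]$ are real and their empirical measure converges \emph{deterministically} to the free convolution, so the limit on the unitary side is immediate from weak convergence tested against the bounded continuous function $\log(z-\cdot)$, with no spectral concentration needed at all.

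The more serious issue is your Varadhan step. Once you pass to quantiles, the cost $g(s_1,s_2)=\log(z-F_\mu^{-1}(s_1)\odot F_\nu^{-1}(s_2))$ is \emph{bounded} under the stated hypotheses on $z$, so truncation $g\wedge M$ and exponential tightness (the remedies you propose) are aimed at the wrong obstruction. The actual problem is \emph{continuity}: when $\mu$ or $\nu$ has disconnected support, $F_\mu^{-1}$ has jump discontinuities (and in the compression case $\mathrm{1}_{s_2>1-\tau}$ is itself discontinuous), so the functional $\gamma\mapsto\int g\,d\gamma$ is not $\mathsf{d}_\Box$-continuous on permutons and Varadhan's lemma does not apply directly. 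Replacing empirical quantiles by $F_\mu^{-1}$ via weak convergence of spectra does not help, since it leaves you with the same discontinuous cost. The paper resolves this by mollifying the quantile, $T_\mu^\varepsilon(t):=\tfrac{1}{\varepsilon}\int_0^\varepsilon T_\mu(t+u)\,du$, applying Varadhan for the resulting connected-support approximants, and then establishing uniform $O(\varepsilon)$ stability of both the finite-$N$ expectation and the variational supremum as $\varepsilon\to0$. Without an ingredient of this kind your Varadhan step is incomplete for general compactly supported $\mu$ and $\nu$.
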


We will see in the next section that in each of the contexts \eqref{eq:max1}, \eqref{eq:max2} and \eqref{eq:max3}, there is a unique coupling $\Pi_{*,z}$ achieving the supremum. 
We will refer to this coupling as the \textbf{optimal coupling}, and emphasise that it depends on the real number $z$. 

\subsection{Explicit solutions to the maximisation problems}
In this section we outline how one can solve the maximisation problems on the right-hand sides of \eqref{eq:max1}, \eqref{eq:max2} and \eqref{eq:max3} explicitly. For the sake of brevity, we will focus here on the former of these problems, \eqref{eq:max1}; the solutions to \eqref{eq:max2} and \eqref{eq:max3} are similar.

We begin with the following result from entropic optimal transport.

\begin{lemma}[Proposition 2.2 of Bernton, Ghosal and Nutz \cite{BGN}] \label{lem:bgn}
Let $c:\mathbb{R}^2 \to \mathbb{R}$ be measurable and bounded. Then there is a unique optimal coupling $\Pi_{*}$ of probability measures $\mu$ and $\nu$ maximising the functional
\begin{align*}
\mathcal{F}_c[\Pi] := \mathbf{E}_\Pi [ c(X,Y) ] - H(\Pi|\mu \otimes \nu).
\end{align*}
Moreover, the optimal coupling $\Pi_*$ is the unique coupling of $\mu$ and $\nu$ with Radon-Nikodym derivative against product measure taking the form
\begin{align} \label{eq:xca}
\frac{ \mathrm{d} \Pi_{*}}{\mathrm{d}(\mu \otimes \nu) } = A(X)B(Y)e^{c(X,Y)}
\end{align}
for some measurable functions $A,B:\mathbb{R} \to [0,\infty)$. 

\end{lemma}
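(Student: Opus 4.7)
The plan is to recognise the functional $\mathcal{F}_c$ as, up to an additive constant, the negative of a relative entropy against an exponentially tilted reference measure, and then invoke standard tools from $I$-projection theory. Since $c$ is bounded, the finite positive measure $\rho := e^{c}\,\mathrm{d}(\mu \otimes \nu)$ has total mass $Z := \int e^{c}\,\mathrm{d}(\mu\otimes\nu) \in (0,\infty)$, so $\widehat{\rho} := \rho/Z$ is a probability measure on $\mathbb{R}^2$. A direct chain-rule computation yields, for every coupling $\Pi \ll \mu\otimes\nu$,
\begin{equation*}
\mathcal{F}_c[\Pi] \;=\; \mathbf{E}_\Pi[c(X,Y)] - H(\Pi|\mu\otimes\nu) \;=\; \log Z - H(\Pi|\widehat{\rho}),
\end{equation*}
while $\mathcal{F}_c[\Pi] = -\infty$ otherwise (since $c$ bounded forces $\mathbf{E}_\Pi[c]$ finite whereas $H(\Pi|\mu\otimes\nu) = +\infty$). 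Thus maximising $\mathcal{F}_c$ over couplings is equivalent to finding the $I$-projection of $\widehat{\rho}$ onto the convex set of couplings of $\mu$ and $\nu$.

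Existence and uniqueness of the optimiser then follow from standard soft arguments. The set of couplings of $\mu$ and $\nu$ is convex and weakly compact (tightness being inherited from $\mu$ and $\nu$, and the marginal constraints being closed under weak limits); relative entropy is weakly lower semicontinuous and strictly convex in its first argument; and $\mu \otimes \nu$ itself achieves a finite value of $H(\cdot|\widehat{\rho})$, since $c$ is bounded. Consequently the infimum is attained at a unique coupling $\Pi_* \ll \mu \otimes \nu$, and writing $\varphi := \mathrm{d}\Pi_*/\mathrm{d}(\mu\otimes\nu)$, finiteness of the entropy gives integrability properties of $\log \varphi$ against $\Pi_*$ adequate for the variational argument below.

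To extract the Schr\"odinger-type form \eqref{eq:xca}, I would run a first-order perturbation argument. For any bounded measurable $\psi(x,y)$ with $\int \psi(x,\cdot)\,\mathrm{d}\nu = 0$ for $\mu$-a.e.\ $x$ and $\int \psi(\cdot,y)\,\mathrm{d}\mu = 0$ for $\nu$-a.e.\ $y$, the measure $\Pi_\epsilon := (1+\epsilon \psi)\Pi_*$ is still a coupling of $\mu$ and $\nu$ for $|\epsilon|$ small; differentiating $\epsilon \mapsto \mathcal{F}_c[\Pi_\epsilon]$ at $\epsilon=0$ and using the zero-marginal conditions leaves
\begin{equation*}
\int \psi(x,y)\,\bigl[\, c(x,y) - \log \varphi(x,y)\, \bigr]\,\mathrm{d}\Pi_*(x,y) \;=\; 0.
\end{equation*}
Since this vanishes for every such $\psi$, a standard duality argument forces the $\Pi_*$-a.e.\ decomposition $c(x,y) - \log \varphi(x,y) = -\log A(x) - \log B(y)$ for measurable $A,B: \mathbb{R} \to [0,\infty)$, which rearranges to the claimed form $\varphi(x,y) = A(x)B(y)e^{c(x,y)}$. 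The converse direction—that any coupling whose density takes this form is necessarily the maximiser—then follows from uniqueness of the $I$-projection. The main obstacle is the decomposition step: one must justify that the test class of zero-marginal $\psi$ is rich enough to annihilate exactly the functions of the product form $\alpha(x)+\beta(y)$, and that $\log \varphi$ lies in $L^1(\Pi_*)$ so that the linearisation is valid; both are standard in Schr\"odinger-bridge analyses but require some care.
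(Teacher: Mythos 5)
Your opening move is a clean and genuinely different reduction: rewriting $\mathcal{F}_c[\Pi]=\log Z-H(\Pi\,|\,\widehat\rho)$ with $\widehat\rho=e^c\,(\mu\otimes\nu)/Z$ turns the problem into an $I$-projection of $\widehat\rho$ onto the convex, weakly compact set of couplings, and the existence/uniqueness part of your argument (lower semicontinuity and strict convexity of $H(\cdot\,|\,\widehat\rho)$, plus the finite value at $\mu\otimes\nu$) is sound and in fact more careful than the paper's own sketch, which simply appeals to strict concavity of the copula energy $\mathcal{G}[w,\gamma]$ without addressing attainment.

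The perturbation step, however, has a concrete error. You declare $\Pi_\epsilon:=(1+\epsilon\psi)\Pi_*$ to be a coupling for $\psi$ satisfying $\int\psi(x,\cdot)\,\mathrm{d}\nu=0$ and $\int\psi(\cdot,y)\,\mathrm{d}\mu=0$, but these are the zero-marginal conditions against \emph{product measure}, while $\Pi_\epsilon$ is a multiplicative perturbation of $\Pi_*$. Checking the first marginal,
\begin{align*}
\Pi_\epsilon(A\times\mathbb{R})=\mu(A)+\epsilon\int_{A\times\mathbb{R}}\psi\,\mathrm{d}\Pi_*,
\end{align*}
and the correction term need not vanish unless $\mathbf{E}_{\Pi_*}[\psi\,|\,X]=0$ a.s.\ (and symmetrically in $Y$), which is a condition on the $\Pi_*$-conditional means, not the $\mu\otimes\nu$-marginals. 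Unless $\Pi_*$ happens to equal $\mu\otimes\nu$, your stated test class does not produce admissible perturbations, so the Euler--Lagrange identity $\int\psi\,(c-\log\varphi)\,\mathrm{d}\Pi_*=0$ is not justified as written.

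There are two standard ways to repair this, and it is worth noting which one the paper uses. One fix is to keep your constraints on $\psi$ but perturb \emph{additively in density}: set $\mathrm{d}\Pi_\epsilon/\mathrm{d}(\mu\otimes\nu)=\varphi+\epsilon\psi$, for which your zero-$\mu\otimes\nu$-marginal conditions are exactly right; differentiating then yields $\int\psi\,(c-\log\varphi)\,\mathrm{d}(\mu\otimes\nu)=0$ and the annihilator argument gives $c-\log\varphi=\alpha(x)+\beta(y)$ $(\mu\otimes\nu)$-a.e.\ where $\varphi>0$ (one still must handle non-negativity of $\varphi+\epsilon\psi$, e.g.\ by restricting $\psi$'s support). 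This is precisely what the paper's copula-level sketch does, with explicit four-term bump-function tests $\psi=\eta_{s_1,t_1}+\eta_{s_2,t_2}-\eta_{s_1,t_2}-\eta_{s_2,t_1}$. The other fix is to keep the multiplicative perturbation but demand $\mathbf{E}_{\Pi_*}[\psi\,|\,X]=\mathbf{E}_{\Pi_*}[\psi\,|\,Y]=0$; this yields the decomposition only $\Pi_*$-a.e., and you would then need an extra step to promote it to the $\mu\otimes\nu$-a.e.\ statement that \eqref{eq:xca} asserts. Either route is legitimate, but your current draft mixes the perturbation rule of the second with the constraint set of the first, which is why the chain of implications does not close.
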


In the appendix we give a sketch proof of Lemma \ref{lem:bgn}. 

A brief calculation using the definition \eqref{eq:kl} of the relative entropy $H(\Pi | \mu \otimes \nu)$ tells us that if $\Pi_*$ is the optimal coupling given in \eqref{eq:xca}, then 
\begin{align} \label{eq:ecom}
\sup_{\Pi} \mathcal{F}_c[\Pi] = \mathcal{F}_c[\Pi_{*}] = \mathbf{E}_{\Pi_*}[-\log A(X)-\log B(Y)] = - \mathbf{E}_{\mu}[ \log A(X) ] - \mathbf{E}_{\nu} [\log B(Y)],
\end{align}
where in the last equality above we have used the fact that $\Pi_*$ is a coupling of $\mu$ and $\nu$.


We now apply Lemma \ref{lem:bgn} to \eqref{eq:max1}.
Setting $c(x,y) = \log(z-(x+y))$ in Lemma \ref{lem:bgn}, the coupling maximising the right-hand side of \eqref{eq:max1} is the unique coupling of $\mu$ and $\nu$ whose Radon-Nikodym derivative against product measure takes the form $ \mathrm{d} \Pi_{*,z}/\mathrm{d}(\mu \otimes \nu) = A_z(X)B_z(Y)( z - (X+Y))$ for nonnegative measurable $A_z$ and $B_z$ depending on $z$. (Note the assumption above \eqref{eq:max1} that $z > E_\mu^+ + E_\nu^+$ guarantees $z - (X+Y)$ is positive.) After a series of manipulations (see Section \ref{sec:free} for details), one can check that this forces $\Pi_{*,z}$ to take the form 
\begin{align} \label{eq:xca2}
\frac{ \mathrm{d} \Pi_{*,z}}{\mathrm{d}(\mu \otimes \nu) } = \omega(z) \frac{ z - (X+Y)}{ (\omega_\mu(z) - X)(\omega_\nu(z) - Y) }
\end{align}
where $\omega(z),\omega_\mu(z),\omega_\nu(z)$ are the unique solutions to the subordination equations
\begin{align}
\omega(z) &= \omega_\mu(z) + \omega_\nu(z) \label{eq:add10} \\
 \frac{1}{\omega(z) - z } &= G_\mu(\omega_\mu(z)) = G_\nu(\omega_\nu(z)), \label{eq:add20}
\end{align}
and, as in \eqref{eq:steel}, $G_\pi$ is the Cauchy transform of a measure $\pi$. Using \eqref{eq:ecom} in conjunction with the fact that in this case we have $A_z(X)B_z(Y) :=  \omega(z)(\omega_\mu(z) - X)^{-1}(\omega_\nu(z) - Y)^{-1}$, we obtain that for all $z > E_\mu^+ + E_\nu^+$ we have 
\begin{align} \label{eq:apollo20}
\int_{-\infty}^\infty \log(z-x) \mu \boxplus \nu (\mathrm{d}x) = - \log\omega(z) +  \mathbf{E}_\mu[ \log(\omega_\mu(z) - X) ]  + \mathbf{E}_\nu[ \log(\omega_\nu(z) - Y) ]  ,
\end{align}
where again, $\omega(z),\omega_\mu(z),\omega_\nu(z)$ are the unique solutions to \eqref{eq:add10}-\eqref{eq:add20}. The equation \eqref{eq:apollo20} is a variation on well known subordination results in the literature, see e.g.\ \cite{BB, BZ, Bia, Vent1, Voi02}. 

In Section \ref{sec:free} we show that one can differentiate \eqref{eq:apollo20} with respect to $z$ to 
characterise the Cauchy transform $G_{\mu \boxplus \nu}(z)$ of the free additive convolution, and thereby derive the $R$-transform relation \eqref{eq:Rtran}.

We can also solve explicitly the corresponding optimal transport problems \eqref{eq:max2} and \eqref{eq:max3} for multiplicative free convolution and free compression, and thereby obtain the corresponding subordination descriptions of these measures. In summary, we have the following result.

\begin{thm} \label{thm:freecons}
Each of the optimal transport problems on the right-hand sides of \eqref{eq:max1}, \eqref{eq:max2} and \eqref{eq:max3} may be solved explicitly. In the case of \eqref{eq:max1}, the solution leads to the $R$-transform description of additive free convolution. In the case of \eqref{eq:max2}, the solution leads to the $S$-transform description of multiplicative free convolution. In the case of \eqref{eq:max3}, the solution leads to the description of free compression in terms of rescaled additive free convolution. 
\end{thm}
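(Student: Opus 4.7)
The plan is to treat the three optimal transport problems in Theorem \ref{thm:main} via a common three-step recipe: (i) apply Lemma \ref{lem:bgn} to pin down the product-type form $A_z(x)B_z(y)e^{c(x,y)}$ of the optimal Radon–Nikodym derivative; (ii) enforce the two marginal constraints on $\Pi_{*,z}$ to obtain a pair of subordination equations that determine $A_z$, $B_z$ and one or two scalar auxiliaries; and (iii) substitute the result back into the identity \eqref{eq:ecom}, then differentiate in $z$ to read off the Cauchy transform of the target measure, from which the classical $R$-, $S$-, or compression identity can be extracted.

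For \eqref{eq:max1}, steps (i)--(ii) are essentially executed in the discussion preceding the theorem: the optimal coupling takes the form \eqref{eq:xca2}, the scalars $\omega_\mu(z),\omega_\nu(z),\omega(z)$ satisfy \eqref{eq:add10}--\eqref{eq:add20}, and the closed form \eqref{eq:apollo20} is obtained. To recover \eqref{eq:Rtran} I would differentiate \eqref{eq:apollo20} in $z$; the chain-rule contributions coming from $\omega_\mu'(z)$ and $\omega_\nu'(z)$ cancel against the derivative of $-\log\omega(z)$ by virtue of \eqref{eq:add10}--\eqref{eq:add20}, leaving $G_{\mu\boxplus\nu}(z) = 1/(\omega(z)-z)$. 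Setting $s := G_{\mu\boxplus\nu}(z)$, relation \eqref{eq:add20} identifies $\omega_\mu(z) = G_\mu^{-1}(s)$ and $\omega_\nu(z) = G_\nu^{-1}(s)$, so $R_\mu(s)+R_\nu(s) = (\omega_\mu(z)-1/s) + (\omega_\nu(z)-1/s) = \omega(z) - 2/s = z-1/s = R_{\mu\boxplus\nu}(s)$.

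For \eqref{eq:max2} I set $c(x,y) = \log(z-xy)$ in Lemma \ref{lem:bgn}, whence the optimal coupling has density proportional to $A_z(x)B_z(y)(z-xy)$ against $\mu\otimes\nu$. Imposing the marginal conditions forces $A_z(x) = \alpha(z)/(\omega_\mu(z)-x)$ and $B_z(y) = \beta(z)/(\omega_\nu(z)-y)$ for scalar functions $\alpha,\beta,\omega_\mu,\omega_\nu$ and a further auxiliary $\omega$ obeying a multiplicative analogue of \eqref{eq:add10}--\eqref{eq:add20} expressed through the moment-generating transform $\psi_\pi(z) := \int \tfrac{xz}{1-xz}\,\pi(\mathrm{d}x)$ rather than the Cauchy transform. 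Substitution into \eqref{eq:ecom} yields a multiplicative analogue of \eqref{eq:apollo20}; differentiating in $z$ and reparametrising through $\psi_\pi^{-1}$ should collapse the subordination into the product identity $S_{\mu\boxtimes\nu}(s) = S_\mu(s)S_\nu(s)$. For \eqref{eq:max3} the cost $c(x,y) = \log(z-x)\mathrm{1}_{y>1-\tau}$ is piecewise constant in $y$, so $e^{c(x,y)}$ equals $z-x$ on the slab $\{y\in(1-\tau,1]\}$ and equals $1$ on its complement. Applying Lemma \ref{lem:bgn} and solving the marginals reduces to a single scalar subordination equation in one auxiliary variable; inserting the resulting coupling into \eqref{eq:ecom} produces a closed form for $\tau\int\log(z-x)\,[\mu]_\tau(\mathrm{d}x)$. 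Differentiating in $z$ and comparing the expression to \eqref{eq:apollo20} applied to $\mu$ and an appropriate rescaled two-point atomic measure then identifies $[\mu]_\tau$ with the rescaled additive free convolution advertised in Section \ref{sec:compr}.

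The main obstacle is the multiplicative case: passing from the Cauchy-transform bookkeeping natural to \eqref{eq:max2} over to the $S$-transform description requires a nontrivial change of variables into the $\psi$- (or equivalently $\eta$-) transform framework, and one must verify that the multiplicative structure of the marginal constraints is exactly the one that matches $S_{\mu\boxtimes\nu} = S_\mu S_\nu$. Once this algebraic matching is in place, the compression case reduces to direct comparison with the additive derivation, and the theorem follows.
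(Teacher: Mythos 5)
Your overall strategy matches the paper's: apply Lemma \ref{lem:bgn} to force the product form of the optimal density, impose the two marginal constraints to extract subordination equations, feed the result back through \eqref{eq:ecom}, and differentiate in $z$ to read off the Cauchy transform before translating into the $R$-, $S$-, or compression identities. For \eqref{eq:max1} your calculation agrees with the paper's Theorems \ref{thm:explicit} and \ref{thm:addmult} and the subsequent $R$-transform derivation (modulo a notational wrinkle inherited from \eqref{eq:add10}: for the chain-rule cancellation to work the system must be normalised as in \eqref{eq:add1}--\eqref{eq:add2}, with $\omega=\omega_\mu+\omega_\nu-z$, so that \eqref{eq:apollo20} carries a factor $-\log(\omega_\mu+\omega_\nu-z)$). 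For \eqref{eq:max2}, you propose carrying out the bookkeeping directly in the $\psi$-transform, whereas the paper first obtains Cauchy-transform subordination \eqref{eq:mult2}--\eqref{eq:mult1}, derives $G_{\mu\boxtimes\nu}(z)=(\omega+1)/(z\omega)$, and only then changes variables to $J_\mu(z)=zG_\mu(z)-1=\psi_\mu(1/z)$. Both routes work; the paper's is somewhat more uniform across the three cases since the $\psi$-translation is deferred to the very end.

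The one genuinely shaky point is your closing step for \eqref{eq:max3}. Comparing the compression formula "to \eqref{eq:apollo20} applied to $\mu$ and an appropriate rescaled two-point atomic measure" conflates two different descriptions: a two-point (Bernoulli) measure enters free compression via \emph{multiplicative} free convolution, which is exactly how Theorem \ref{thm:sym2} proves the symmetric-group asymptotics, but the "rescaled additive free convolution" advertised in Section \ref{sec:compr} is the $k$-fold \emph{self}-convolution identity $[\mu]_{1/k}=(1/k)_*\mu^{\boxplus k}$, which is not a free convolution of $\mu$ with a two-point measure. The paper's route is to first deduce $R_{[\mu]_\tau}(s)=R_\mu(\tau s)$ directly from the single subordination equation \eqref{eq:comp1} and the Cauchy-transform identity $G_{[\mu]_\tau}(z)=\frac{1-\tau}{\tau}\frac{1}{\omega-z}$, and then invoke the dilation rule $R_{\lambda_*\mu}(s)=\lambda R_\mu(\lambda s)$ together with additivity of $R$-transforms to obtain $[\mu]_{1/k}=(1/k)_*\mu^{\boxplus k}$. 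You should replace the final sentence of your compression argument with this $R$-transform manipulation; as written, the comparison you describe would not close the loop.
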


See Section \ref{sec:variational} and Section \ref{sec:free} for more explicit statements of the various aspects of Theorem \ref{thm:freecons}.

\subsection{Proof overview} \label{sec:overview}
Our proof of Theorem \ref{thm:main} is based on an asymptotic analysis of the remarkable \textbf{quadrature formulas} of Marcus, Spielman and Srivastava \cite{MSS, MSS2, MSS3}. The quadrature formulas state that the expected characteristic polynomial of certain operations of sums, products and minors of random conjugations of matrices remain the same, regardless of whether the conjugation is done by a random unitary or random permutation matrix.

To give a brief flavour here we will outline the additive case, where an asymptotic analysis of the additive quadrature formula leads to \eqref{eq:max1}. (The approaches to proving \eqref{eq:max2} and \eqref{eq:max3} are similar.) The additive quadrature formula states that for real $N\times N$ diagonal matrices $(A_N)_{N \geq 1}$ and $(B_N)_{N \geq 1}$ we have the surprising relation
\begin{align} \label{eq:GM0}
\mathbf{E} [ \det (z - (A_N + U_NB_NU_N^{-1}) ) ]  = \mathbf{E} [ \det (z - (A_N + \Sigma_N B_N \Sigma_N^{-1} )) ],
\end{align}
where on the left-hand side $U_N$ is a Haar unitary random matrix, and on the right-hand side $\Sigma_N$ is a uniform random permutation matrix.
We study the large-$N$ asymptotics of \eqref{eq:GM0} as $A_N$ and $B_N$ are taken as sequences of diagonal matrices whose empirical spectra approximate probability measures $\mu$ and $\nu$ (i.e., we are in the setting of \eqref{eq:conv00}).

When one studies the asymptotics of the left-hand side of \eqref{eq:GM0} under \eqref{eq:conv00}, we find that this expectation is governed by the \emph{typical} behaviour of the empirical spectrum of $A_N + U_N B_N U_N^{-1}$, which, in light of \eqref{eq:aconv}, converges to $\mu \boxplus \nu$. Indeed, in Section \ref{sec:Uconv} we use recently developed tools from finite free probability to prove that under \eqref{eq:conv00} we have
\begin{align} \label{eq:UGM000}
\lim_{N \to \infty} \frac{1}{N } \log \mathbf{E} [ \det (z - (A_N + U_NB_NU_N^{-1}) ) ]   =  \int_{-\infty}^\infty \log(z-x) ( \mu \boxplus \nu )(\mathrm{d}x) . 
\end{align}

The asymptotics of the right-hand side of \eqref{eq:GM0} under \eqref{eq:conv00} are substantially more delicate. Here the expectation gets its overwhelming contribution from \emph{unusual} permutation matrices $\Sigma_N$ that pair up the eigenvalues of $A_N$ and $B_N$ in such a way as to make the determinant larger than its typical value. 
The asymptotic cost (in terms of probability) for a uniform permutation matrix to take on such an unusual value is captured by a large deviation principle on the symmetric group due to Kenyon et al.\ \cite{KKRW}, based on earlier work by Trashorras \cite{trashorras} and Wu \cite{wu}. 
Giving a very rough idea of how this works here, we find that given a coupling $\Pi$ of the laws $\mu$ and $\nu$, we have 
\begin{align} \label{eq:fern1}
\mathbf{P} \left( \text{Emp. spect. of $A_N + \Sigma_N B_N \Sigma_N^{-1}$ close to law of $X+Y$ under $\Pi$} \right) = e^{ - (1 + o(1)) N H(\Pi | \mu \otimes \nu) }.
\end{align}
We note in particular that since $H(\Pi|\mu \otimes \nu ) = 0$ only when $\Pi = \mu \otimes \nu$, \eqref{eq:fern1} entails that the empirical spectrum of $A_N+\Sigma_NB_N\Sigma_N^{-1}$ converges to $\mu \ast \nu$.

As $N \to \infty$, the right-hand side of \eqref{eq:GM0} is dominated by $\Sigma_N$ that attempt to maximise the determinant $\det(z - (A_N + \Sigma_N B_N \Sigma_N^{-1}))$ whilst trying to minimise the relative entropy $e^{ - N H(\Pi | \mu \otimes \nu) }$. 
Using Varadhan's lemma from large deviation theory, we ultimately prove under a strong version of \eqref{eq:conv00} that
\begin{align} \label{eq:SGM000}
\lim_{N \to \infty} \frac{1}{N } \log  \mathbf{E} [ \det (z - (A_N + \Sigma_N B_N \Sigma_N^{-1} )) ] = \sup_{\Pi} \left\{ \mathbf{E}_\Pi [ \log (z - (X+Y) ] - H(\Pi|\mu \otimes \nu) \right\}
\end{align}
where the supremum is taken over all couplings $\Pi$ of $\mu$ and $\nu$.

In any case, comparing \eqref{eq:UGM000} and \eqref{eq:SGM000} using \eqref{eq:GM0}, we obtain the first equation \eqref{eq:max1} of our main result. As mentioned above, the proofs of \eqref{eq:max2} and \eqref{eq:max3} are similar in flavour. 

\subsection{Related work}

Free probability was initiated by Voiculescu in the 1980s and 1990s \cite{Vcon1, Vcon2, Vcon3, Vent1, Vent2, Vent3}.  Additive and multiplicative free convolution first appeared in the respective works \cite{Vcon1} and \cite{Vcon2} of Voiculescu. Free compression first appeared explicitly in Nica and Speicher \cite{NS}. 

Since the inception of free probability, numerous authors have noted connections with optimal transport.
Perhaps the first work in this vein was by Guionnet and Shlyakhtenko \cite{GS}, who proved a free analogue of Brenier's theorem.
Bahr and Boschert \cite{BaBo} recently gave a free analogue of a result by Cordero-Erausquin and Klartag \cite{CEK} on moment measures (see also \cite{sant}). 
Gangbo et al.\ prove, among other things, a free analogue of Monge-Kantorovich duality \cite{GJNS}. 
Jekel, Li and Shlyakhtenko construct a free analogue of the Wasserstein manifold in \cite{JLS}.
See also further work by Jekel \cite{jekel, jekel2}.

Entropic optimal transport is a very active subfield \cite{LMS, NW, BGN}. One of the principal motivations behind entropic optimal transport is that in general (and indeed, in many natural settings, for instance when $c(x,y)$ is Euclidean distance), the couplings $\Pi$ minimising $\mathbf{E}_\Pi [c(X,Y)]$ in Monge-Kantorovich problems are non-unique. However, as soon as an entropy penalty is introduced, the optimal couplings are unique. Under fairly mild conditions, one can send the entropy penalty to zero and obtain a `canonical' choice of coupling minimising $\mathbf{E}_\Pi [c(X,Y)]$. We refer the reader to lecture notes by Nutz for further information \cite{nutz}.


The quadrature formulas were developed by Marcus, Spielman and Srivastava in their resolution of the Kadison-Singer problem, as well as in their work on Ramanujan graphs \cite{MSS, MSS2, MSS3}. See also recent work by Gorin and Marcus \cite{GM} on quadrature formulas in relation to crystallization of random matrix orbits.

The main result of the present article involves classical entropy appearing in the context of free probability. It bears mentioning that in a series of groundbreaking papers, starting with \cite{Vent1} and \cite{Vent2}, Voiculescu introduced a free analogue of classical entropy. Free entropy describes, roughly speaking, the volume (in terms of Lebesgue measure on Hermitian matrices) of tuples of matrices whose joint tracial moments approximate those of certain non-commutative random variables \cite[Chapter 7]{MS}. For a single probability measure $\mu$, its free entropy is characterised by the formula
\begin{align} \label{freeentropy}
\chi(\mu) := \int_{-\infty}^\infty \int_{-\infty}^\infty \log|y-x|\mu(\mathrm{d}x) \mu(\mathrm{d}y) + \frac{1}{2} \log(2 \pi) + \frac{3}{4}. 
\end{align}
Shlyakhtenko and Tao \cite{ST} showed that the free entropy of the rescaled free compression $\chi(\sqrt{k}_* [\mu]_{1/k})$ of a probability measure $\mu$ is monotone increasing in real $k\geq 1$; see also Shlyakthenko \cite{Shlya} for the case when $k$ is integer. Shlyakhtenko and Tao \cite{ST} provided a variational description of the flow of probability measures $(\sqrt{k}_* [\mu]_{1/k})_{k \geq 1}$, which reflects an understanding of the relationship between the statistical physics of Gelfand-Tsetlin patterns, the free energy of the \emph{bead model} \cite{boutillier}, and free compression. See \cite[Section 2]{johnston} for a discussion in the setting of the bead model, or \cite[Section 3-4]{JO} for a broader conjecture in the setting of the Toda lattice.

There are intimate relationships between the large deviations of random matrices and free probability that have been studied in work of Guionnet and coauthors \cite{AG, BGH, BCG, CG, CG2, GZ}; see in particular Guionnet's survey article \cite{guionnet}. Matrix concentration inequalities may be used to bound the error in the free probability approximations of spectral measures of sums and products of random matrices as the dimensions of the matrices are sent to infinity \cite{BBV}. Let us highlight in particular Narayanan and Sheffield \cite{NS2} (see also \cite{BGH, NST}), who recently showed that roughly speaking under \eqref{eq:conv00}, for probability measures $\pi$ on $\mathbb{R}$ we have
\begin{align} \label{eq:fern2}
\mathbf{P} \left( \text{Emp. spect. of $A_N + U_N B_N U_N^{-1}$ close to $\pi$} \right) \approx e^{ - (1 + o(1)) N^2 H_{\mu,\nu}[\pi] };
\end{align} c.f.\ \eqref{eq:fern1}. The functional $H_{\mu,\nu}[\pi]$ is nonnegative, and zero only when $\pi = \mu \boxplus \nu$ is additive free convolution. We note that the large deviation principle in \eqref{eq:fern2} has $e^{-O(N^2)}$ behaviour, whereas the determinant $\mathrm{det}(z-(A_N+U_NB_NU_N^{-1}))$ has $e^{O(N)}$ behaviour. It follows that unlike in the setting of \eqref{eq:SGM000}, where there is nontrivial interplay between the large deviation principle \eqref{eq:fern1} and the size of the determinant, in the setting of \eqref{eq:UGM000} the expectation is dominated by the event that the empirical spectrum of $A_N+U_NB_NU_N^{-1}$ is close to $\mu \boxplus \nu$. 

Not much is known explicitly about the functional $H_{\mu,\nu}[\pi]$ of a real probability measure $\pi$ occuring in \eqref{eq:fern2}, but it can be described in terms of a (non-explicit) free energy for hives, a combinatorial object introduced by Knutson and Tao \cite{KT,KT2} that describes the possible eigenvalues of the sum of two Hermitian matrices. Where the macroscopic behaviour of random Gelfand-Tsetlin patterns leads to a variational understanding of free compression, an outstanding open problem is to better understand the macroscopic behaviour of random hives \cite{taoblog}, and thereby obtain an explicit variational understanding of additive free convolution.


In Section \ref{sec:setup} we discuss \emph{permutons}, which are continuum limits of permutations introduced in \cite{HKMRS} and \cite{GGK}. In addition to the large deviation principle for uniform permutations that we use \cite{KKRW}, numerous authors have studied in recent years the macroscopics of random permutations from other probability measures on the symmetric group (such as Mallows permutations). See e.g.\ \cite{ADK, dauvergne, RVV}. 

\subsection{Overview of article}
The remainder of the article is structured as follows. 
\begin{itemize}
\item Section \ref{sec:further} is dedicated to corollaries, generalisations, and further discussion of our main results.
In particular, we derive inequalities relating free and classical operations, and give a multivariate version of our main result.
\item In Section \ref{sec:3} we introduce the quadrature formulas in full, and discuss couplings, copulas and relative entropy. Here we set up the proof of Theorem \ref{thm:main} and its multivariate version, which is then completed in the following two sections.
\item In Section \ref{sec:Uconv}, we study asymptotics of expectations of characteristic polynomials under unitary conjugation, and prove \eqref{eq:UGM000}, as well as its associated analogues for multiplication and minors of matrices.
\item In Section \ref{sec:Sconv}, we study asymptotics of expectations of characteristic polynomials under symmetric group conjugation, proving \eqref{eq:SGM000} and its analogues for multiplication and minors of matrices.
\item In Section \ref{sec:variational}, we discuss further the explicit solutions to the variational problems set out in \eqref{eq:max1}, \eqref{eq:max2} and \eqref{eq:max3}. 
\item In the final Section \ref{sec:free}, we expand on Theorem \ref{thm:freecons}, thereby connecting the solutions of the variational problems with standard results in the free probability literature. 
\end{itemize}

\subsection{A word on notation}
Throughout the article, we are able to treat most equations involving additive and multiplicative free convolution (and the corresponding addition and multiplication of random matrices) simultaneously, and do this by introducing a placeholder $\odot$ that represents either addition or multiplication of matrices or of real numbers. Namely,
\begin{align*}
\odot := + \qquad \text{or} \qquad \odot := \times
\end{align*}
We use the corresponding notation $\boxdot$ to denote the corresponding additive or multiplicative free convolution, i.e.\ $\boxdot = \boxplus$ or $\boxdot = \boxtimes$. The notation $\otimes$ always refers to product measure.

\subsection*{Acknowledgements}
The authors would like to thank Dimitri Shlyakhtenko, Jan Nagel, Peter M{\"o}rters, Sam Power, and Zakhar Kabluchko for their valuable comments. 

\section{Corollaries, generalisations, and further discussion} \label{sec:further}
\subsection{Free-classical inequalities}

It follows from the definition \eqref{eq:kl} of relative entropy that $H(\mu \otimes \nu | \mu \otimes \nu) = 0$. In particular, setting $\Pi = \mu \otimes \nu$ in the right-hand side of each of \eqref{eq:max1}, \eqref{eq:max2} and \eqref{eq:max3}
we obtain the following result.
\begin{cor} \label{cor:ineq}
Let $\mu$ and $\nu$ be probability measures on the real line with compact support.
In each of the following equations, under $\mathbf{E}_{\mu \otimes \nu}$ let $X$ and $Y$ be independent random variables with marginal laws $\mu$ and $\nu$. 

Then for $z > E_\mu^+ + E_\nu^+$ we have
\begin{align} \label{eq:maxa1}
\int_{-\infty}^\infty \log(z-x)(\mu \boxplus \nu)(\mathrm{d}x) \geq \mathbf{E}_{\mu \otimes \nu}[\log(z - (X+Y))].
\end{align}
If we additionally suppose that $\nu$ is supported on $[0,\infty)$, then whenever $z > E_\mu^+E_\nu^+$ we have 
\begin{align} \label{eq:maxa2}
\int_{-\infty}^\infty \log(z-x)(\mu \boxtimes \nu)(\mathrm{d}x) \geq \mathbf{E}_{\mu \otimes \nu}[\log(z - XY)].
\end{align}
Finally, for $z > E_\mu^+$ we have 
\begin{align} \label{eq:maxa3}
\int_{-\infty}^\infty \log(z-x)[\mu]_\tau \mathrm{d}x \geq \mathbf{E}_{\mu}[\log(z-X)].
\end{align}
\end{cor}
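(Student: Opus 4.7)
The plan is to obtain each of the three inequalities by plugging the product measure into the supremum appearing in Theorem \ref{thm:main}. The single observation underlying the whole argument is that for any probability measure $\mu \otimes \nu$, we have
\begin{equation*}
H(\mu \otimes \nu \,|\, \mu \otimes \nu) = \int \log \frac{\mathrm{d}(\mu \otimes \nu)}{\mathrm{d}(\mu \otimes \nu)} \, \mathrm{d}(\mu \otimes \nu) = \int \log 1 \, \mathrm{d}(\mu \otimes \nu) = 0,
\end{equation*}
as follows from the definition \eqref{eq:kl} of relative entropy. Thus the product coupling is always an admissible (and entropically free) competitor in each of the three variational problems.

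For \eqref{eq:maxa1}, I would apply \eqref{eq:max1} of Theorem \ref{thm:main} and lower bound the supremum by its value at the single coupling $\Pi = \mu \otimes \nu$. Since the relative entropy term vanishes, this yields
\begin{equation*}
\int \log(z-x) (\mu \boxplus \nu)(\mathrm{d}x) \geq \mathbf{E}_{\mu \otimes \nu} [\log(z - (X+Y))] - 0,
\end{equation*}
which is exactly \eqref{eq:maxa1}. The argument for \eqref{eq:maxa2} is identical, using \eqref{eq:max2} in place of \eqref{eq:max1} and noting that the hypothesis $z > E_\mu^+ E_\nu^+$ together with $\nu$ being supported on $[0,\infty)$ ensures the integrand $\log(z-XY)$ is well-defined $\mu \otimes \nu$-almost surely.

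For the compression inequality \eqref{eq:maxa3} the computation is slightly more substantial because of the indicator and the factor $\tau$. Applying \eqref{eq:max3} with the coupling $\Pi = \mu \otimes \nu_{\mathrm{uni}}$, the right-hand side becomes
\begin{equation*}
\mathbf{E}_{\mu \otimes \nu_{\mathrm{uni}}}[\log(z-X) \mathbf{1}_{Y > 1-\tau}] = \mathbf{E}_\mu[\log(z - X)] \cdot \nu_{\mathrm{uni}}((1-\tau, 1]) = \tau \, \mathbf{E}_\mu[\log(z-X)],
\end{equation*}
since $X$ and $Y$ are independent under the product coupling and $\nu_{\mathrm{uni}}$ is Lebesgue measure on $[0,1]$. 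Dividing through by $\tau$ gives \eqref{eq:maxa3}.

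There is essentially no obstacle here; the only subtlety worth flagging is the bookkeeping for the compression case, where one must track the factor of $\tau$ arising both from the theorem statement and from the Lebesgue measure of the interval $(1-\tau, 1]$, so that they cancel correctly. One could also remark in passing that these inequalities are strict whenever the optimal coupling $\Pi_{*,z}$ identified in the discussion following Theorem \ref{thm:main} differs from product measure, which is the generic situation.
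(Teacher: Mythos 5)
Your proposal is correct and is essentially the paper's own argument: the corollary is obtained by plugging the product coupling into the supremum in Theorem \ref{thm:main} and using $H(\mu\otimes\nu\,|\,\mu\otimes\nu)=0$. Your additional bookkeeping for the $\tau$ factor in the compression case is a nice clarification but does not change the route.
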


Since the function $x \mapsto \log(z-x)$ is concave, the equations \eqref{eq:maxa1}, \eqref{eq:maxa2} and \eqref{eq:maxa3} may be regarded as somehow encoding the fact that free operations produce measures with thinner tails than their classical analogues, subject to having the same variance. 

\subsection{The Cauchy transform of the optimal coupling at $z$}
Recall from Lemma \ref{lem:bgn} that there is a unique coupling $\Pi_{*,z}$ achieving the supremum in each of \eqref{eq:max1}, \eqref{eq:max2} and \eqref{eq:max3}. 

\begin{proposition} \label{prop:cauchy}
Let $\Pi_{*,z}$ be the unique coupling achieving the supremum in \eqref{eq:max1}. Then 
\begin{align} \label{eq:max1b}
\mathbf{E}_{\Pi_{*,z}} \left[ \frac{1}{z - (X+Y)} \right] = \int_{-\infty}^\infty \frac{1}{z-x} ( \mu \boxplus \nu)(\mathrm{d}x).
\end{align}
Likewise, if $\Pi_{*,z}$ is the unique coupling achieving the supremum in \eqref{eq:max2}, then 
\begin{align} \label{eq:max2b}
\mathbf{E}_{\Pi_{*,z}} \left[ \frac{1}{z - XY} \right] = \int_{-\infty}^\infty \frac{1}{z-x} ( \mu \boxtimes \nu)(\mathrm{d}x).
\end{align}
Finally, if $\Pi_{*,z}$ is the unique coupling achieving the supremum in \eqref{eq:max3}, then
\begin{align} \label{eq:max3b}
\mathbf{E}_{\Pi_{*,z}} \left[ \frac{1}{z-X} \mathrm{1}_{\{Y > 1-\tau\}} \right] = \tau  \int_{-\infty}^\infty \frac{1}{z-x} [ \mu]_\tau (\mathrm{d}x). 
\end{align}
\end{proposition}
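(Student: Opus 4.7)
The plan is to apply an envelope-type (Danskin) argument, differentiating the three variational identities of Theorem \ref{thm:main} in $z$. I outline the additive case \eqref{eq:max1b}; the proofs of \eqref{eq:max2b} and \eqref{eq:max3b} are analogous.

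Set $c_z(x,y) := \log(z - (x+y))$ and
\begin{equation*}
F(z) := \sup_{\Pi} \left\{ \mathbf{E}_\Pi[c_z(X,Y)] - H(\Pi|\mu \otimes \nu) \right\}.
\end{equation*}
By \eqref{eq:max1} of Theorem \ref{thm:main}, $F(z) = \int \log(z-x)(\mu \boxplus \nu)(\mathrm{d}x)$ for $z > E_\mu^+ + E_\nu^+$. Since $\mu \boxplus \nu$ has compact support and $z$ lies strictly to the right of this support, a standard differentiation under the integral sign yields $F'(z) = \int \frac{1}{z-x}(\mu \boxplus \nu)(\mathrm{d}x)$, which is precisely the right-hand side of \eqref{eq:max1b}.

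Now fix $z_0 > E_\mu^+ + E_\nu^+$ and write $\Pi_0 := \Pi_{*,z_0}$ for the unique optimal coupling at $z_0$ (whose existence and uniqueness are given by Lemma \ref{lem:bgn}). By the definition of $F$ as a supremum, $F(z) \geq \mathbf{E}_{\Pi_0}[c_z(X,Y)] - H(\Pi_0|\mu \otimes \nu)$ for every admissible $z$, with equality at $z = z_0$. Thus $z_0$ is a minimum of the nonnegative function
\begin{equation*}
g(z) := F(z) - \mathbf{E}_{\Pi_0}[c_z(X,Y)] + H(\Pi_0|\mu \otimes \nu),
\end{equation*}
so $g'(z_0) = 0$. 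Because $z - (X+Y) \geq z - E_\mu^+ - E_\nu^+ > 0$ uniformly for $z$ near $z_0$ and for $(X,Y)$ in the support of $\Pi_0$, I can differentiate under the expectation to obtain $\partial_z \mathbf{E}_{\Pi_0}[c_z(X,Y)] = \mathbf{E}_{\Pi_0}[1/(z-(X+Y))]$. Evaluating $g'(z_0) = 0$ now gives $F'(z_0) = \mathbf{E}_{\Pi_{*,z_0}}[1/(z_0 - (X+Y))]$, which combined with Step 1 is exactly \eqref{eq:max1b}.

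For \eqref{eq:max2b} the identical argument applies with $c_z(x,y) = \log(z-xy)$, where on the support of $\Pi_0$ we have $z - XY \geq z - E_\mu^+ E_\nu^+ > 0$, so the derivative passes inside the expectation as before. For \eqref{eq:max3b} the cost $c_{z,\tau}(x,y) = \log(z-x)\mathbf{1}_{y>1-\tau}$ is discontinuous in $y$, but this plays no role in differentiating with respect to $z$: the pointwise derivative is the bounded function $\frac{1}{z-x}\mathbf{1}_{y>1-\tau}$, and the factor $\tau$ appearing on the right-hand side of \eqref{eq:max3b} is simply inherited from the $\tau$ on the left-hand side of \eqref{eq:max3}. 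The main (mild) obstacle I anticipate is the envelope step itself: one must resist the temptation to differentiate $\Pi_{*,z}$ in $z$. The clean route uses only pointwise optimality at a single point $z_0$ together with differentiability of $F$, which is automatic from the closed-form left-hand side of Theorem \ref{thm:main}; no regularity of $z \mapsto \Pi_{*,z}$ is required.
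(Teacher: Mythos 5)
Your proof is correct, and it takes a genuinely different route from the paper's. The paper's proof of Proposition \ref{prop:cauchy} (in Section \ref{sec:cauchy}) exploits the explicit form of the optimal coupling derived via the subordination equations: it writes $\mathrm{d}\Pi_{*,z}/\mathrm{d}(\mu\otimes\nu) = \omega(z-X\odot Y)/((\omega_\mu-X)(\omega_\nu-Y))$, computes $\mathbf{E}_{\Pi_{*,z}}[1/(z-X\odot Y)] = \omega G_\mu(\omega_\mu)G_\nu(\omega_\nu)$ by independence, and then invokes the subordination relations \eqref{eq:add2}, \eqref{eq:mult1}--\eqref{eq:mult2}, \eqref{eq:comp1} together with Theorems \ref{thm:addmult} and \ref{thm:compression} to reduce to $G_{\mu\boxdot\nu}(z)$. (The paper also gives a separate heuristic sketch that perturbs $z$ into the complex plane and compares arguments on both sides of the quadrature formula.) Your envelope/Danskin argument sidesteps all of this: it only uses the closed form of $F(z)$ from Theorem \ref{thm:main} to get $F'(z) = G_{\mu\boxplus\nu}(z)$, and then first-order optimality of $\Pi_{*,z_0}$ in the well-posed variational problem to identify $F'(z_0)$ with $\mathbf{E}_{\Pi_{*,z_0}}[1/(z_0-(X+Y))]$, correctly working around the nonsmoothness of $z\mapsto\Pi_{*,z}$ by comparing $F$ to the fixed-coupling functional. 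This is more elementary and more portable (it would work for any one-parameter family of bounded costs for which the value function has a known smooth form); what the paper's approach buys is the explicit coupling itself and the accompanying bridge to the $R$-, $S$-, and compression subordination theory, which the authors needed anyway for Theorems \ref{thm:explicit}--\ref{thm:explicit2}. One small thing worth making explicit in your write-up: the interchange of $\partial_z$ and $\int$ on the left-hand side uses that $\mu\boxplus\nu$ has support in $[E_\mu^-+E_\nu^-,\,E_\mu^++E_\nu^+]$ (and analogously for $\boxtimes$ and $[\,\cdot\,]_\tau$), so $z$ is uniformly away from the support; this is a standard fact but is what licenses the claim $F'(z)=\int\frac{1}{z-x}(\mu\boxplus\nu)(\mathrm{d}x)$.
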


We emphasise that the equations \eqref{eq:max1b}-\eqref{eq:max3b} only relate Cauchy transforms at a single value $z$, and that $\Pi_{*,z}$ depends on $z$. Thus, in the context of \eqref{eq:max1b} for instance, the law of $X+Y$ under $\Pi_{*,z}$ is not equal to $\mu \boxplus \nu$ in general. 

Let us give an example of Proposition \ref{prop:cauchy} in action. In the context of \eqref{eq:max1} and \eqref{eq:max1b}, consider setting $\mu = \nu = \mu_{\mathrm{bern}}$, where $\mu_{\mathrm{bern}} :=\frac{1}{2}\delta_{-1}+\frac{1}{2}\delta_{-1}$ is the symmetric Bernoulli law. Every coupling $\Pi$ of the pair $(\mu_{\mathrm{bern}},\mu_{\mathrm{bern}})$ takes the form 
\begin{align*}
\Pi_q=q\delta_{(1,1)}+q\delta_{(-1,-1)}+(1/2-q)\delta_{(-1,1)}+(1/2-q)\delta_{(1,-1)}
\end{align*}
for some $q \in [0,1/2]$. 

For $z > 2$, let $f_z(q) :=\mathbf{E}_{\Pi_q}[\log(z - (X+Y)] - H(\Pi_q|\mu \otimes \nu)$. Then 
\begin{equation*}f_z(q)=q \log(z-2)+q\log(z+2)+ (1-2q)\log(z)-\left[ 2q\log(4q)+ (1-2q)\log(4(1/2-q))\right].
\end{equation*}
Differentiating $f_z(q)$ with respect to $q$, one can verify that the coupling $\Pi_q$ maximisising $f_z(q)$ corresponds to setting $q = q^*(z) = \sqrt{z^2-4}/(2(z+\sqrt{z^2-4}))$. Then one can check that $f_z(q^*) = \log(z+\sqrt{z^2-4}) -\log(2)$, and that
\begin{equation} \label{eq:berq}
\mathbf{E}_{\Pi_{q^*}}\left[\frac{1}{z-(X+Y)}\right]=\frac{q^*}{z-2}+\frac{q^*}{z+2}+\frac{1-2q^*}{z}=\frac{1}{\sqrt{z^2-4}}.
\end{equation}

On the other hand, according to \cite[Example 12.8]{NS}, we have $\mu_{\mathrm{bern}} \boxplus \mu_{\mathrm{bern}} = \mu_{\mathrm{arc}}$ where  
\begin{align} \label{eq:arc}
\mu_{\mathrm{arc}} (\mathrm{d}x) =\frac{ \mathrm{1}_{x \in [-2,2]} \mathrm{d}x}{ \pi \sqrt{ 4-x^2} }
\end{align}
 denotes the symmetric arcsine distribution, whose Cauchy transform takes the form
\begin{equation} \label{eq:berq2}
G_{\mu_{\mathrm{arc}}}(z) = \frac{1}{\sqrt{z^2-4}}.
\end{equation}
The equations \eqref{eq:berq} and \eqref{eq:berq2}, together with the fact that $\mu_{\mathrm{bern}} \boxplus \mu_{\mathrm{bern}} = \mu_{\mathrm{arc}}$, amount to \eqref{eq:max1b} in the special case $\mu=\nu=\mu_{\mathrm{bern}}$.

We prove Proposition \ref{prop:cauchy} in Section \ref{sec:cauchy} based on the same subordination equations used to prove Theorem \ref{thm:freecons}. However, it is possible to give a more conceptual proof using the arguments of Section \ref{sec:overview}, which we use here to provide a sketch proof of \eqref{eq:max1b}. (Again, \eqref{eq:max2b} and \eqref{eq:max3b} may be sketched with similar ideas.) 
\begin{proof}[Sketch proof of \eqref{eq:max1b}]
Consider studying the asymptotic behaviour of the equation \eqref{eq:GM0} under \eqref{eq:conv00}, but with $z = a+\sqrt{-1}b/N$ being a small imaginary perturbation of a real number. Then for real $x < a$ we have 
\begin{align*}
z - x = a + \sqrt{-1}b/N - x = e^{ \frac{\sqrt{-1} b}{N} \frac{1}{a-x} } (a - x) + O(1/N^2).
\end{align*}
In particular, given real $c_1,\ldots,c_N$ with $\frac{1}{N} \sum_{i = 1}^N \delta_{c_i} \approx \pi$, we have
\begin{align*}
 \prod_{j=i}^N (z - c_i) \approx \exp\left\{ (1 + o(1)) N \int_{-\infty}^\infty \log(a-x) \pi (\mathrm{d}x) + \sqrt{-1} (1 + o(1)) b \int_{-\infty}^\infty \frac{1}{a-x} \pi(\mathrm{d}x) \right\}.
\end{align*}
Consider now, with $z = a+\sqrt{-1}b/N$, looking at the \emph{arguments} of both sides of \eqref{eq:GM0} as $N \to \infty$ under \eqref{eq:conv00}. 

On the one hand, the spectrum of $A_N + U_NB_NU_N^{-1}$ concentrates around $\mu \boxplus \nu$, so that 
\begin{align} \label{eq:brah1}
\lim_{N \to \infty} \mathrm{arg} ~\mathbf{E} [ \det (z- (A_N + U_NB_NU_N^{-1}) ) ] = b \int_{-\infty}^\infty \frac{1}{a-x} (\mu \boxplus \nu)(\mathrm{d}x).
\end{align}
On the other hand, in light of the arguments in Section \ref{sec:overview}, the expectation $\mathbf{E} [ \det (z- (A_N + \Sigma_NB_N\Sigma_N^{-1}) ) ] $ gets the bulk of its contribution from $\Sigma_N$ for which the spectrum of $A_N + \Sigma_NB_N\Sigma_N^{-1}$ concentrates around the law of $X+Y$ under $\Pi_{*,a}$. Thus 
\begin{align} \label{eq:brah2}
\lim_{N \to \infty} \mathrm{arg} ~\mathbf{E} [ \det (z- (A_N + \Sigma_NB_N\Sigma_N^{-1}) ) ]  = b \mathbf{E}_{\Pi_{*,a}} \left[ \frac{1}{a-(X+Y)}\right].
\end{align}
Relating \eqref{eq:brah1} and \eqref{eq:brah2} using \eqref{eq:GM0}, we obtain \eqref{eq:max1b} (replacing $a$ with $z$ at the end).
\end{proof}

\subsection{The multivariate case }
In Theorem \ref{thm:main} we discussed how one can formulate the free additive or multiplicative convolution of two different probability measures in terms of a variational problem. In fact, in the sequel we will instead prove the following multivariate generalisation of Theorem \ref{thm:main}. 

\begin{thm} \label{thm:mainmulti}
Let $\mu_1,\ldots,\mu_d$ be probability measures on the real line with compact support. Then for $z > \sum_{j=1}^d E_{\mu_j}^+$ we have
\begin{align} \label{eq:max1multi}
\int_{-\infty}^\infty \log(z-x) (\mu_1 \boxplus \ldots \boxplus \mu_d ) (\mathrm{d}x) = \sup_\Pi \left\{ \mathbf{E}_\Pi \left[ \log \left( z - \sum_{j=1}^d X_j \right) \right] - H(\Pi|\otimes_{j=1}^d \mu_j) \right\},
\end{align}
where the supremum is taken over all couplings $\Pi$ of the probability measures $\mu_1,\ldots,\mu_d$. 

Suppose that we additionally assume that $\mu_2,\ldots,\mu_d$ are supported on $[0,\infty)$. Then for $z > \prod_{j=1}^d E_{\mu_j}^+$ we have
\begin{align*}
\int_{-\infty}^\infty \log(z-x) (\mu_1 \boxtimes \ldots \boxtimes \mu_d ) (\mathrm{d}x) = \sup_\Pi \left\{ \mathbf{E}_\Pi \left[ \log \left( z - \prod_{j=1}^d X_j \right) \right] - H(\Pi|\otimes_{j=1}^d \mu_j) \right\},
\end{align*}
where the supremum is taken over all couplings $\Pi$ of the probability measures $\mu_1,\ldots,\mu_d$. 
\end{thm}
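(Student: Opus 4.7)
The plan is to extend the two-variable strategy sketched in Section~\ref{sec:overview} to $d$ factors, resting on three ingredients: a multivariate Marcus-Spielman-Srivastava quadrature formula, asymptotic analysis of the unitary side via finite free probability, and a multivariate large deviation principle on the symmetric group. I focus on the additive statement \eqref{eq:max1multi}; the multiplicative statement follows by the identical strategy with sums replaced by products throughout, with the positivity hypothesis on $\mu_2, \ldots, \mu_d$ preserving associativity of $\boxtimes$ in the unitary-side induction and the bound $z > \prod_{j=1}^d E_{\mu_j}^+$ keeping the cost function bounded so that Varadhan's lemma applies.

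First, in Section~\ref{sec:3} I would establish the multivariate quadrature identity
\begin{equation*}
\mathbf{E}\!\left[\det\!\left(z - \sum_{j=1}^d U_j A_j^{(N)} U_j^{-1}\right)\right] = \mathbf{E}\!\left[\det\!\left(z - \sum_{j=1}^d \Sigma_j A_j^{(N)} \Sigma_j^{-1}\right)\right]
\end{equation*}
for real diagonal matrices $A_j^{(N)}$, independent Haar unitaries $U_j$, and independent uniform permutation matrices $\Sigma_j$, by an extension of the Marcus-Spielman-Srivastava argument underlying \eqref{eq:GM0}. Taking $A_j^{(N)}$ whose empirical spectrum converges weakly to $\mu_j$, an induction on $d$---conditioning on $U_2, \ldots, U_d$ and noting that by unitary invariance the inner expectation over $U_1$ depends only on the spectrum of $\sum_{j \ge 2} U_j A_j^{(N)} U_j^{-1}$, which by the inductive hypothesis concentrates near $\mu_2 \boxplus \cdots \boxplus \mu_d$---combined with the bivariate asymptotic \eqref{eq:UGM000} and associativity of $\boxplus$, gives the unitary-side limit
\begin{equation*}
\lim_{N \to \infty} \frac{1}{N} \log \mathbf{E}\!\left[\det\!\left(z - \sum_{j=1}^d U_j A_j^{(N)} U_j^{-1}\right)\right] = \int_{-\infty}^\infty \log(z-x)\,(\mu_1 \boxplus \cdots \boxplus \mu_d)(\mathrm{d}x).
\end{equation*}

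The principal obstacle is the symmetric-group side, where I need a multivariate large deviation principle: for a $(d-1)$-tuple $(\Sigma_2, \ldots, \Sigma_d)$ of independent uniform permutations on $\{1, \ldots, N\}$, the empirical $d$-coupling
\begin{equation*}
\frac{1}{N} \sum_{i=1}^N \delta_{(\lambda_i(A_1^{(N)}),\, \lambda_{\Sigma_2(i)}(A_2^{(N)}),\, \ldots,\, \lambda_{\Sigma_d(i)}(A_d^{(N)}))}
\end{equation*}
should satisfy an LDP at speed $N$ over couplings $\Pi$ of $\mu_1, \ldots, \mu_d$, with rate function $\Pi \mapsto H(\Pi|\otimes_{j=1}^d \mu_j)$. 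I would establish this by adapting the permuton-based proof of \cite{KKRW} directly to the $d$-variate setting, using the chain rule for relative entropy---decomposing $H(\Pi|\otimes_{j=1}^d \mu_j)$ into a sum of conditional relative entropies of successive coordinates of $\Pi$---to motivate an inductive approach that peels off one permutation at a time. The subtle technical point is exponential tightness of the empirical $d$-couplings together with a matching LDP upper bound that handles all $d-1$ permutations simultaneously rather than merely iteratively.

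With the LDP in hand, Varadhan's lemma applied to the continuous bounded functional $\Pi \mapsto \mathbf{E}_\Pi[\log(z - \sum_j X_j)]$---bounded since $z > \sum_j E_{\mu_j}^+$ and the $\mu_j$ have compact support---yields the variational limit
\begin{equation*}
\lim_{N \to \infty} \frac{1}{N} \log \mathbf{E}\!\left[\det\!\left(z - \sum_{j=1}^d \Sigma_j A_j^{(N)} \Sigma_j^{-1}\right)\right] = \sup_\Pi \left\{ \mathbf{E}_\Pi\!\left[\log\!\left(z - \sum_{j=1}^d X_j\right)\right] - H(\Pi|\otimes_{j=1}^d \mu_j) \right\}.
\end{equation*}
Equating this with the unitary-side limit via the multivariate quadrature formula then delivers \eqref{eq:max1multi}.
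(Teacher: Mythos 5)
Your high-level strategy --- multivariate quadrature formula, unitary-side asymptotics, permutation-side large deviation principle, Varadhan --- is the same as the paper's, but the two steps where you deviate are precisely where the paper has to work hardest, and you pass over the genuine difficulties. On the unitary side, the paper does \emph{not} condition on $U_2,\ldots,U_d$ and invoke concentration of the intermediate spectrum; it instead uses the real-rootedness of the expected characteristic polynomial and the cited convergence of multivariate \emph{finite} free convolutions to their free counterparts (Theorem~\ref{thm:ff}, from \cite{AP,AGP,Mar21}). This avoids the limit-interchange subtlety your conditioning argument creates: after conditioning, the inner expectation is of order $e^{cN}$ while you integrate it against a random spectrum whose concentration around $\mu_2 \boxplus \cdots \boxplus \mu_d$ is itself a large-deviations statement at rate $N^2$; turning ``concentrates near'' into a valid asymptotic for the outer expectation would require quantitative control that you do not supply.

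The more serious gap is on the symmetric-group side. The LDP of \cite{KKRW} (and the paper's multivariate extension, Theorem~\ref{thm:KKRW}) is for the block-averaged \emph{copula} $\gamma_N$ on $[0,1]^d$ in the cut-metric topology, \emph{not} for the empirical coupling you write down, which is $T_\#\tilde\gamma_N$ with $\tilde\gamma_N$ the atomic measure \eqref{eq:tildegamma}. Three issues intervene before Varadhan can be applied, all of which the paper resolves explicitly and you do not mention: (i) the determinant is expressed through the atomic $\tilde\gamma_N$, which is \emph{not} in $\Gamma_d$ and does not satisfy the LDP directly --- the paper compares $\langle F,\tilde\gamma_N\rangle$ with $\langle F,\gamma_N\rangle$ in Lemma~\ref{lem:cra}; (ii) pushing the copula LDP forward to couplings on $\mathbb{R}^d$ via $T_\#$ requires control over the discontinuities of the quantile functions $T_{\mu_j}$, and the paper bypasses any appeal to an a.e.-continuity/contraction argument by first treating measures with connected support (Proposition~\ref{prop:special}) and then approximating general $\mu_j$ by the smoothed $\mu_j^\varepsilon$ (Lemmas~\ref{lem:rats} and~\ref{lem:crab}); (iii) identifying the contracted rate as $H(\Pi\,|\,\otimes_j\mu_j)$ requires Lemma~\ref{lem:flat}, since $T_\#$ is many-to-one when the $\mu_j$ carry atoms. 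Your blanket assertion that ``the continuous bounded functional'' satisfies the hypotheses of Varadhan's lemma hides exactly these points; the paper explicitly notes that $\gamma\mapsto\langle F^{\odot}_{\bm\mu},\gamma\rangle$ need not be continuous on $\Gamma_d$ when the $T_{\mu_j}$ jump, and invests a full subsection in the approximation scheme needed to circumvent this.
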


\subsection{Absolute values and $z$ contained in the support}

In the additive case, the reader may wonder whether we can extend the range of $z$ in the equalities \eqref{eq:max1}-\eqref{eq:max3} to encompass all real $z$, simply replacing $\log(z-x)$ with $\log|z-x|$. We now show by way of example that in general this is not the case. That is, in general
\begin{align} \label{eq:max14}
\int_{-\infty}^\infty \log|z-x| ( \mu \boxplus \nu )(\mathrm{d}x) \neq \sup_{\Pi} \left\{ \mathbf{E}_\Pi \left[ \log |z - (X+Y)| \right] - H(\Pi|\mu \otimes \nu) \right\}, \qquad z \in \mathbb{R}.
\end{align}
To see this, take $\mu = \nu=\mu_{\mathrm{bern}}:= \frac{1}{2} \delta_{-1} + \frac{1}{2} \delta_1$ as the standard Bernoulli law. Then, on the one hand, since the classical convolution of $\mu_{\mathrm{bern}}$ with itself is $\mu_{\mathrm{bern}} \ast \mu_{\mathrm{bern}} = \frac{1}{4} \delta_{-2}+ \frac{1}{2} \delta_0+\frac{1}{4}\delta_2$ we have 
\begin{align*}
\int_{-\infty}^\infty \log|z-x| (\mu_{\mathrm{bern}} \ast \mu_{\mathrm{bern}}) (\mathrm{d}x) = \frac{1}{4} \log |z-2| + \frac{1}{2} \log |z| + \frac{1}{4} \log|z+2|.
\end{align*}
In particular, setting $z = 1$ we see that $\int_{-\infty}^\infty \log|1-x| ( \mu_{\mathrm{bern}} \ast \mu_{\mathrm{bern}}) (\mathrm{d}x)  = \frac{1}{4}\log 3 > 0$. 

On the other hand, as mentioned just after Proposition \ref{prop:cauchy}, we have $\mu_{\mathrm{bern}} \boxplus \mu_{\mathrm{bern}} = \mu_{\mathrm{arc}}$, where $\mu_{\mathrm{arc}}$ is as in \eqref{eq:arc}. According to  \cite[Example 3.5, pp.\ 45]{STo}, we have 
\begin{align*}
\int_{-\infty}^\infty \log|z-x|  \mu_{\mathrm{arc}}(\mathrm{d}x)  = 
\begin{cases}
0, \qquad &\text{if $z \in [-2,2]$},\\
\log(\frac{1}{2}|z+\sqrt{z^2-4}|) \qquad &\text{otherwise}.
\end{cases}
\end{align*}
Hence, when $z=1$, we have  $\int_{-\infty}^\infty \log|z-x| (\mu_{\mathrm{bern}} \ast \mu_{\mathrm{bern}}) (\mathrm{d}x)  = \frac{1}{4}\log 3 > 0 = \int_{-\infty}^\infty \log|z-x| (\mu_{\mathrm{bern}} \boxplus \mu_{\mathrm{bern}}) (\mathrm{d}x)$, thereby establishing that setting $\Pi$ as the product measure $\Pi = \mu_{\mathrm{bern}} \otimes \mu_{\mathrm{bern}}$, we violate equality in the setting of \eqref{eq:max14}.


A similar calculation shows that by taking $\mu=\mu_{\mathrm{bern}}=\frac{1}{2}\delta_{-1}+\frac{1}{2}\delta_1$ and $\nu = \frac{1}{2}\delta_0+\frac{1}{2}\delta_1$, we violate the multiplicative analogue of \eqref{eq:max14}.

\subsection{Aymptotic versions of eigenvalue and determinant inequalities}
As discussed in Section \ref{sec:overview}, our main result Theorem \ref{thm:main} arises out of an asymptotic treatment of the quadrature formula \eqref{eq:GM0}, which provides implicit information about the eigenvalues of a sum of unitarily invariant matrices $A$ and $B$ in terms of the eigenvalues $A$ and $B$. In parallel to this idea, we touch on here how it should be possible to obtain asymptotic variants of 
various (deterministic) eigenvalue and determinant inequalities.

Take for instance, Fiedler's inequality \cite{fiedler}, which states that for real diagonal matrices $A$ and $B$ with diagonal entries $a_1,\ldots,a_N$ and $b_1,\ldots,b_N$ we have
\begin{align} \label{eq:fiedler}
\min_{ \sigma \in \mathcal{S}_N } \prod_{i=1}^N (z - (a_i + b_{\sigma(i)})) \leq \det(z - (A + UBU^{-1})) \leq \max_{ \sigma \in \mathcal{S}_N } \prod_{i=1}^N (z - (a_i + b_{\sigma(i)})).
\end{align}

Consider now in the setting of \eqref{eq:fiedler} letting $(A_N)_{N \geq 1}$ and $(B_N)_{N \geq 1}$ be sequences of matrices satisfying \eqref{eq:conv00}, and let $(U_N)_{N \geq 1}$ be a sequence of Haar unitary random matrices. By taking $\frac{1}{N} \log$-asymptotics of \eqref{eq:fiedler} in such a regime, one can obtain the macroscopic Fiedler inequality 
\begin{align} \label{eq:hydrofiedler}
\inf_\Pi \mathbf{E}_{\Pi}[ \log(z-(X+Y))]  &\leq \int_{-\infty}^\infty \log(z - x) \mu \boxplus \nu(\mathrm{d}x)\leq \sup_\Pi \mathbf{E}_{\Pi}[ \log(z-(X+Y))], 
\end{align}
where the infimum and supremum are taking over all couplings $\Pi$ of $\mu$ and $\nu$. The upper bound in \eqref{eq:hydrofiedler} is a weaker version of \eqref{eq:max1}. 

Similarly, one can also formulate asymptotic versions of the Horn inequalities \cite{horn, KT}, which describe the possible eigenvalues of a Hermitian matrix equation $A+B=C$. For example, one such asymptotic inequality reads $T_{\mu \boxplus \nu}(s+t-1) \leq T_\mu(s) + T_\nu(t)$ whenever $s+t \geq 1$. Here $T_\mu:[0,1] \to \mathbb{R}$ is the quantile function of a probability measure $\mu$. See also \cite{BL}.


\subsection{Large-$z$ asymptotics}
By studying the large-$z$ asymptotics of the formulas \eqref{eq:max1}, \eqref{eq:max2} and \eqref{eq:max3}, one can obtain information about the moments of free operations on measures.

Take for example \eqref{eq:max1}. Using the expansion $\log(z-x) = \log z - \sum_{k \geq 1} \frac{1}{k} (x/z)^k$ in \eqref{eq:max1}, as well as the fact that the expectation $\mathbf{E}_{\Pi}[ \log z - \frac{1}{z}(X+Y) ]$ of the first two terms in the expansion does not depend on the coupling $\Pi$ of $\mu$ and $\nu$, we obtain first of all that $\int_{-\infty}^\infty x (\mu \boxplus \nu)(\mathrm{d}x) = \int_{-\infty}^\infty x \mu(\mathrm{d}x) + \int_{-\infty}^\infty x \nu(\mathrm{d}x)$, as well as the equation
\begin{align} \label{eq:max1b0}
\sum_{k \geq 2} \frac{1}{kz^k} \int_{-\infty}^\infty x^k ( \mu \boxplus \nu )(\mathrm{d}x) =\inf_{\Pi} \left\{ \sum_{k \geq 2} \frac{1}{kz^k}\mathbf{E}_\Pi[(X+Y)^k] + H(\Pi|\mu \otimes \nu) \right\}
\end{align}
relating the higher moments. 

Note that when $z$ is large, the entropy penalty inside the infimum on the right-hand side of \eqref{eq:max1b0} is large compared to the sum. Thus there is a large incentive for the optimal coupling to be close to product measure. 
With this in mind, it is a brief exercise to show that the unique coupling $\Pi_{*,z}$ achieving the minimum in \eqref{eq:max1b0} takes the form $\mathrm{d}\Pi_{*,z}/\mathrm{d}(\mu \otimes \nu) = 1 + \frac{1}{z^2}W_z$ as $z \to \infty$, where $W_z$ is a random variable satisfying $\mathbf{E}_{\mu \otimes \nu}[W_z] = 0$ and converging to a limit random variable $W_\infty$ as $z \to \infty$. As a result, one can deduce that the first three moments of additive free convolution are the same as that of classical convolution, but the fourth moment is smaller. We leave the details to the interested reader.

\section{Quadrature formulas, couplings and copulas} \label{sec:3}

In this section we set up the proofs of our main results, Theorem \ref{thm:main} and Theorem \ref{thm:mainmulti}. Note that the first two statements of Theorem \ref{thm:main} are the case $d=2$ of Theorem \ref{thm:mainmulti}. 

\subsection{Couplings, copulas, and relative entropy}
In order to prepare our work for the remainder of the article, it is useful to discuss in full the connection between couplings of probability measures and their associated copulas, and how this relationship pertains to relative entropy. 

Let $\mu_1,\ldots,\mu_d$ be probability measures on $\mathbb{R}$. A \textbf{coupling} of $\mu_1,\ldots,\mu_d$ is a probability measure on $\mathbb{R}^d$ with the property that its pushforward under the $j^{\text{th}}$ coordinate projection is $\mu_j$. 
A \textbf{copula} on $[0,1]^d$ is simply a coupling of $\mu_1 = \ldots = \mu_d = \nu_{\mathrm{uni}}$, where $\nu_{\mathrm{uni}}$ is Lebesgue measure on $[0,1]$. A copula may or may not have a density with respect to Lebesgue measure on $[0,1]^d$. If a copula does have a density $\gamma:[0,1]^d \to \mathbb{R}$ with respect to Lebesgue measure, this density satisfies
\begin{align} \label{eq:jetski}
\int_{[0,1]^{d-1}}\gamma(s_1,\ldots,s_d) \mathrm{d}s_1 \ldots \mathrm{d}s_{j-1} \mathrm{d}s_{j+1}\ldots \mathrm{d}s_d = 1
\end{align}
for all $s_j \in [0,1]$. A \textbf{permuton} is simply a copula in the setting $d=2$, though for the sake of consistency even in the case $d=2$ we will refer to permutons as copulas.

Given a probability measure $\mu$ on the real line, we write $T_\mu:[0,1] \to \mathbb{R}$ for its \textbf{quantile function}. This is the left-continuous function satisfying
\begin{align} \label{eq:quantile}
t = \int_{-\infty}^{T_\mu(t)} \mu(\mathrm{d}x) \qquad \text{for $t \in [0,1]$}.
\end{align}
If $U$ is uniformly distributed on $[0,1]$, then $T_\mu(U)$ is distributed according to $\mu$. If $\mu$ has a gap in its support of length $g$, $T_\mu:[0,1] \to \mathbb{R}$ has a jump of size $g$. If $\mu$ has a Dirac mass of size $m$, then $T_\mu:[0,1] \to \mathbb{R}$ is constant on a subinterval of $[0,1]$ of length $m$. 

Recall that if $\mu$ is a measure on $E$, and $T:E \to F$ is a measurable map, we may define the pushforward measure $T_\# \mu$ on $F$ by setting $T_\# \mu (F' ) := \mu(T^{-1}(F'))$ for measurable subsets $F'$ of $F$. 

Given probability measures $\mu_1,\ldots,\mu_d$ on $\mathbb{R}$, every copula $\gamma$ on $[0,1]^d$ gives rise to a coupling $\Pi$ of the probability measures $\mu_1,\ldots,\mu_d$ by letting $\Pi$ denote the law of the vector $(T_{\mu_1}(U_1),\ldots,T_{\mu_d}(U_d))$ where $(U_1,\ldots,U_d)\sim \gamma$. In other words,
\begin{align*}
\Pi = T_\# \gamma,
\end{align*}
where $T:[0,1]^d \to \mathbb{R}^d$ is given by 
\begin{align} \label{eq:ton1}
T(s_1,\ldots,s_d) := (T_{\mu_1}(s_1),\ldots,T_{\mu_d}(s_d)).
\end{align}
In fact, every coupling $\Pi$ of $\mu_1,\ldots,\mu_d$ arises this way \cite{sklar}.

When each of $\mu_1,\ldots,\mu_d$ has a density with respect to Lebesgue measure, the map $T$ is injective, and the correspondence between $\Pi$ and $\gamma$ is bijective in that they uniquely determine one another. Otherwise it is possible that different copulas may give rise to the same coupling of $\mu_1,\ldots,\mu_d$. 

Take for example, in the setting $d=2$, two probability measures $\mu$ and $\nu$ both of which contain Dirac masses. Then $T_\mu$ (resp.\ $T_\nu$) is constant on a non-empty subinterval $(a,b)$ (resp.\ $(c,d)$) of $[0,1]$. Then any pair of copulas $\gamma$ and $\gamma'$ that agree on $[0,1]^2 - (a,b) \times (c,d)$ and give the same mass to $(a,b) \times (c,d)$ give rise to the same coupling of $\mu$ and $\nu$, despite possibly being different measures on the rectangle $(a,b) \times (c,d)$.

We say that a copula $\gamma$ is \textbf{flat} with respect to quantile functions $(T_{\mu_1},\ldots,T_{\mu_d})$ if, for all intervals $(a_j,b_j)$ such that $T_{\mu_j}$ is constant on $(a_j,b_j)$, $\gamma$ coincides with a multiple of the Lebesgue measure on the box $\prod_{j=1}^d (a_j,b_j)$. There is a bijective correspondence between couplings $\Pi$ of $\mu_1,\ldots,\mu_d$ and copulas that are flat with respect to $(T_{\mu_1},\ldots,T_{\mu_d})$.

Recall from the introduction that for probability measures $\Pi$ and $\Pi'$ on $E$ with $\Pi'$ absolutely continuous against $\Pi$, we write $H(\Pi'|\Pi) := \int_E  \log \frac{ \mathrm{d}\Pi'}{\mathrm{d}\Pi} ~\mathrm{d}\Pi'$ for the relative entropy of $\Pi'$ against $\Pi$. 

If a copula $\gamma$ on $[0,1]^d$ has a density with respect to Lebesgue measure, we will sometimes abuse notation and write $\gamma:[0,1]^d \to \mathbb{R}$ for the density itself. Given a probability measure $\gamma$ on $[0,1]^d$, write
\begin{align} \label{eq:Hdef}
H(\gamma) := 
\begin{cases}
\int_{[0,1]^d} \gamma(s) \log \gamma(s) \mathrm{d}s \qquad &\text{$\gamma$ is a copula \& has density w.r.t.\ Leb.}\\
+ \infty \qquad &\text{otherwise}. 
\end{cases}
\end{align}
We emphasise that if $\gamma$ is a copula on $[0,1]^d$ but not absolutely continuous with respect to Lebesgue measure, then $H(\gamma) =+\infty$. Note that if $\gamma$ is a copula, $H(\gamma) = H(\gamma|\nu_{\mathrm{uni}}^{\otimes d})$, where $\nu_{\mathrm{uni}}^{\otimes d}$ is Lebesgue measure on $[0,1]^d$.

\begin{lemma} \label{lem:flat}
Let $\gamma$ be a copula on $[0,1]^d$, and let $\Pi := T_\# \gamma$ be the associated coupling of $\mu_1,\ldots,\mu_d$, where $T$ is as in \eqref{eq:ton1}. Then 
\begin{align*}
H(\gamma) \geq H(\Pi|\otimes_{j=1}^d \mu_j),
\end{align*}
with equality if and only if $\gamma$ is flat.
\end{lemma}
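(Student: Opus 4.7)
My plan is to reduce the inequality to the monotonicity of relative entropy under measurable pushforwards, and then extract the equality case from the strict convexity of $x \log x$. If $\gamma$ is not absolutely continuous with respect to Lebesgue measure on $[0,1]^d$ then $H(\gamma) = +\infty$ by \eqref{eq:Hdef} and there is nothing to prove, so I may assume $\gamma$ has a density $f : [0,1]^d \to [0,\infty)$. The key observation is that the reference measures also push forward consistently under $T$: since $T_{\mu_j}(U_j) \sim \mu_j$ whenever $U_j \sim \nu_{\mathrm{uni}}$ are independent, we have $T_\# \nu_{\mathrm{uni}}^{\otimes d} = \otimes_{j=1}^d \mu_j$. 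Thus the pair $(\gamma, \nu_{\mathrm{uni}}^{\otimes d})$ on $[0,1]^d$ pushes forward under $T$ to the pair $(\Pi, \otimes_{j=1}^d \mu_j)$ on $\mathbb{R}^d$, and in particular $\Pi \ll \otimes_{j=1}^d \mu_j$ since pushforward preserves absolute continuity.

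Next I would introduce $h := \mathbf{E}_{\nu_{\mathrm{uni}}^{\otimes d}}[f \mid \sigma(T)]$, which, being $\sigma(T)$-measurable, has the form $h = g \circ T$ for some measurable $g : \mathbb{R}^d \to [0, \infty)$. A short change-of-variables argument using the defining property of conditional expectation identifies $g$ as the Radon--Nikodym derivative $d\Pi / d(\otimes_{j=1}^d \mu_j)$. Applying Jensen's inequality pointwise to the convex function $x \mapsto x \log x$ gives $\mathbf{E}_{\nu_{\mathrm{uni}}^{\otimes d}}[f \log f \mid \sigma(T)] \geq h \log h$; integrating against $\nu_{\mathrm{uni}}^{\otimes d}$ and using $T_\# \nu_{\mathrm{uni}}^{\otimes d} = \otimes_{j=1}^d \mu_j$ then yields
\begin{equation*}
H(\gamma) = \int f \log f \, d\nu_{\mathrm{uni}}^{\otimes d} \geq \int h \log h \, d\nu_{\mathrm{uni}}^{\otimes d} = \int g \log g \, d(\otimes_{j=1}^d \mu_j) = H(\Pi \mid \otimes_{j=1}^d \mu_j).
\end{equation*}

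For the equality case, strict convexity of $x \mapsto x \log x$ forces equality in the Jensen step if and only if $f$ is itself $\sigma(T)$-measurable, i.e.\ constant $\nu_{\mathrm{uni}}^{\otimes d}$-almost everywhere on each fiber of $T$. The fiber of $T$ over a point $y \in \mathbb{R}^d$ is the product $\prod_j T_{\mu_j}^{-1}(\{y_j\})$, each factor of which is a maximal (possibly degenerate) interval on which $T_{\mu_j}$ is constant. So $f$ being constant on every fiber is precisely the flatness condition of the statement, completing the characterisation of equality.

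The main technical point is the bookkeeping of fibers of $T$ when some of the $\mu_j$ carry atoms: then $T_{\mu_j}$ has genuine flat portions, $T$ is many-to-one on the corresponding boxes in $[0,1]^d$, and one must carefully verify that $\sigma(T)$-measurability of $f$ indeed translates to $\gamma$ being a multiple of Lebesgue measure on every such box. Once this bookkeeping is in place, the rest reduces to standard monotonicity of relative entropy and strict convexity of $x \log x$.
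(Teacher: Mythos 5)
Your argument is essentially the paper's, with the cited black box opened up. The paper notes that $\Pi = T_\# \gamma$ and $\otimes_{j=1}^d \mu_j = T_\# \nu_{\mathrm{uni}}^{\otimes d}$ and then invokes the known monotonicity of relative entropy under joint pushforwards together with its equality condition (constancy of the density on fibers), citing the literature; you instead re-derive that monotonicity and its equality case directly via the conditional expectation $\mathbf{E}[f \mid \sigma(T)]$ and conditional Jensen for $x \mapsto x \log x$. Both proofs hinge on the same two facts, and your identification of $g = d\Pi/d(\otimes_j \mu_j)$, as well as the translation of $\sigma(T)$-measurability into flatness on the product boxes where each $T_{\mu_j}$ is constant, matches the paper's reasoning.
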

\begin{proof}
Relative entropy decreases under joint pushforwards, namely, whenever $T:E\to F$ is a measurable map, we have
\begin{align} \label{eq:relpush}
H(T_\# \pi'|T_\# \pi) \leq H(\pi'|\pi),
\end{align}
with equality if and only if $\mathrm{d}\pi'/\mathrm{d}\pi$ is constant on the fibers $T^{-1}(y)$ for $y \in F$. 
See \cite[Section 2]{lehec} or \cite[Section 10]{varadhan} for details. 

Note that $\Pi = T_\# \gamma$, and $\otimes_{j=1}^d \mu_j = T_\# \nu_{\mathrm{uni}}^{\otimes d}$. Then using \eqref{eq:relpush} to obtain the inequality below, we have 
\begin{align*}
H(\Pi|\otimes_{j=1}^d \mu_j)=H(T_\# \gamma | T_\# \nu_{\mathrm{uni}}^{\otimes d}) \leq H(\gamma | \nu_{\mathrm{uni}}^{\otimes d} ) = H(\gamma)
\end{align*}
with equality occuring only when $\gamma = \mathrm{d}\gamma/\mathrm{d}\nu_{\mathrm{uni}}^{\otimes d}$ is constant on the fibers $T^{-1}(x_1,\ldots,x_d)$ of $T$, i.e.\ when $\gamma$ is flat.
\end{proof}

We are now equipped to express Theorem \ref{thm:main} and its multivariate version Theorem \ref{thm:mainmulti} in the language of copulas and quantile functions.

\begin{thm}[Restatement of Theorem \ref{thm:main} and Theorem \ref{thm:mainmulti}]\label{thm:main2}
Let $\mu_1,\ldots,\mu_d$ be probability measures on the real line with compact support. 

Then, for $z > \sum_{j=1}^d E_{\mu_j}^+$ we have 
\begin{align} \label{eq:max1s}
 \int_{-\infty}^\infty \log(z-x) ( \mu_1 \boxplus \ldots \boxplus \mu_d )(\mathrm{d}x)  =\sup_{\gamma} \left\{ \int_{[0,1]^d}  \log \left( z - \sum_{j=1}^d T_{\mu_j}(s_j) \right) \gamma(s)\mathrm{d}s - H(\gamma) \right\}.
\end{align}
Suppose we additionally assume that $\mu_2,\ldots,\mu_d$ are supported on $[0,\infty)$. Then for $z > \prod_{j=1}^d E_{\mu_j}^+$ we have 
\begin{align} \label{eq:max2s}
 \int_{-\infty}^\infty \log(z-x) ( \mu_1 \boxtimes \ldots \boxtimes \mu_d )(\mathrm{d}x)  =\sup_{\gamma} \left\{ \int_{[0,1]^d}  \log \left( z - \prod_{j=1}^d T_{\mu_j}(s_j) \right) \gamma(s)\mathrm{d}s - H(\gamma) \right\}.
\end{align}
Finally, for $z > E^+_{\mu_1}$ we have 
\begin{align} \label{eq:max3s}
\tau \int_{-\infty}^\infty  \log(z-x)  [\mu_1]_\tau (\mathrm{d}x) =\sup_{\gamma} \left\{ \int_{[0,1]^2}  \log(z-T_{\mu_1}(s_1))\mathrm{1}_{\{s_2 > 1-\tau \}} \gamma(s)\mathrm{d}s - H(\gamma) \right\}.
\end{align}
In \eqref{eq:max1s} and \eqref{eq:max2s}, the suprema are taken over all copulas on $[0,1]^d$. In \eqref{eq:max3s}, the supremum is taken over all copulas on $[0,1]^2$.
\end{thm}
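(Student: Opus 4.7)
The plan is to derive Theorem \ref{thm:main2} as an immediate consequence of Theorem \ref{thm:mainmulti} (for \eqref{eq:max1s} and \eqref{eq:max2s}) and the compression part of Theorem \ref{thm:main} (for \eqref{eq:max3s}), by reparameterising the supremum over couplings $\Pi$ as a supremum over copulas $\gamma$ on $[0,1]^d$ via the quantile map $T(s)=(T_{\mu_1}(s_1),\ldots,T_{\mu_d}(s_d))$ of \eqref{eq:ton1}. The left-hand sides of the three identities already coincide with those of the previously-stated theorems, so the content is to match the right-hand sides.

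For the inequality $(\leq)$, I would take any copula $\gamma$ on $[0,1]^d$ and let $\Pi := T_\#\gamma$, which is automatically a coupling of $\mu_1,\ldots,\mu_d$. The pushforward change-of-variables identity yields
\[
\int_{[0,1]^d} \log\!\left(z - \sum_{j=1}^d T_{\mu_j}(s_j)\right)\gamma(s)\,\mathrm{d}s = \mathbf{E}_\Pi\!\left[\log\!\left(z - \sum_{j=1}^d X_j\right)\right],
\]
while Lemma \ref{lem:flat} gives $H(\gamma) \geq H(\Pi|\otimes_{j=1}^d \mu_j)$. Subtracting, the copula functional at $\gamma$ is bounded above by the coupling functional at $\Pi$, so taking suprema and invoking Theorem \ref{thm:mainmulti} proves $(\leq)$.

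For the reverse inequality $(\geq)$, given any coupling $\Pi$, the discussion preceding Lemma \ref{lem:flat} exhibits a flat copula $\gamma$ with $T_\#\gamma = \Pi$, and the equality case of that lemma yields $H(\gamma) = H(\Pi|\otimes_{j=1}^d \mu_j)$. Combined with the same pushforward identity, the copula functional at $\gamma$ equals the coupling functional at $\Pi$, so $(\geq)$ follows upon taking suprema. This establishes \eqref{eq:max1s}; the identical argument handles \eqref{eq:max2s} with $\sum_{j=1}^d T_{\mu_j}(s_j)$ replaced by $\prod_{j=1}^d T_{\mu_j}(s_j)$, and \eqref{eq:max3s} with $d=2$ and $\mu_2 = \nu_{\mathrm{uni}}$ (so that $T_{\nu_{\mathrm{uni}}}(s_2)=s_2$), cost $\log(z - T_{\mu_1}(s_1))\mathrm{1}_{\{s_2 > 1-\tau\}}$. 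I do not foresee a substantive obstacle beyond bookkeeping; the only point requiring mild care is that couplings with $H(\Pi|\otimes_{j=1}^d \mu_j)=+\infty$ correspond, via Lemma \ref{lem:flat} together with the convention \eqref{eq:Hdef}, to copulas with $H(\gamma) = +\infty$, so such terms are inactive in both suprema.
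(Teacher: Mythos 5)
Your proposal is correct and coincides with the paper's own ``Proof of equivalence between this statement and Theorems~\ref{thm:main} and \ref{thm:mainmulti}'', which appears immediately after the theorem statement: you use the same two ingredients (the pushforward change of variables through $T$, and Lemma~\ref{lem:flat} giving $H(\gamma)\ge H(T_\#\gamma\,|\,\otimes_j\mu_j)$ with equality for flat copulas). The only difference is one of exposition: the paper collapses the two-sided argument into the single sentence ``thus the suprema on the right-hand sides \ldots are the same'', whereas you spell out the $\le$ and $\ge$ directions and note how to handle $H=+\infty$; this is harmless and arguably clearer.

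One small remark on framing: in the paper's logical development Theorem~\ref{thm:main2} is \emph{proved} directly (from Theorems~\ref{thm:UN} and~\ref{thm:SN} plus the quadrature formulas), and Theorems~\ref{thm:main} and~\ref{thm:mainmulti} are then deduced from it; the equivalence you establish is the bridge. Presenting your argument as a derivation of Theorem~\ref{thm:main2} \emph{from} Theorems~\ref{thm:main} and~\ref{thm:mainmulti} is fine as a proof of the equivalence, but it should not be mistaken for a proof that any of the three holds.
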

\begin{proof}[Proof of equivalence between this statement and Theorems \ref{thm:main} and Theorem \ref{thm:mainmulti}]
We prove the equivalence between \eqref{eq:max1s} and \eqref{eq:max1multi}; the proofs the other equivalences are similar. First note that if $\gamma$ is a copula associated with a coupling $\Pi$ of $\mu_1,\ldots,\mu_d$, i.e.\ $\Pi = T_\# \gamma$, then by definition
\begin{align*}
\mathbf{E}_\Pi\left[ \log\left(z-\sum_{j=1}^d X_j\right) \right] = \int_{[0,1]^d} \log \left(z-\sum_{j=1}^d T_{\mu_j}(s_j) \right) \gamma(\mathrm{d}s) .
\end{align*}
Now note that
\begin{align*}
\int_{[0,1]^d}  \gamma(s) \log \gamma(s) \mathrm{d}s= H(\gamma) \geq H(\Pi|\otimes_{j=1}^d \mu_j),
\end{align*}
with equality whenever $\gamma$ is flat. Thus the suprema on the right-hand sides of \eqref{eq:max1s} and \eqref{eq:max1multi} are the same.
\end{proof}

\label{sec:setup}
\subsection{Quadrature}
In this section we introduce the quadrature formulas in full. These were discovered by Marcus, Spielman and Srivastava \cite{MSS, MSS2, MSS3}, though we follow the formulation given in Gorin and Marcus \cite{GM}.

Let $A$ and $B$ with diagonal matrices with real entries $a_1,\ldots,a_N$ and $b_1,\ldots,b_N$, 
and in each of the following formulas, let $U$ be a Haar unitary random matrix and let $\Sigma$ be a uniform permutation matrix. 

The first quadrature formula, concerned with the addition of random matrices, states that 
\begin{align} \label{eq:GM}
\mathbf{E} [ \det (z - (A + UBU^{-1}) ) ]  = \mathbf{E} [ \det (z - (A + \Sigma B \Sigma^{-1})) ] .
\end{align}
The second, concerned with multiplication, states that 
\begin{align} \label{eq:GM2}
\mathbf{E} [ \det (z - A \cdot UBU^{-1} ) ]  = \mathbf{E} [ \det (z - A \cdot \Sigma B \Sigma^{-1} ) ] .
\end{align}
The final quadrature formula is concerned with the minors of random matrices. Letting $[A]_k$ denote the $k \times k$ principal minor (i.e.\ top-left corner) of an $N \times N$ matrix, we have 
\begin{align} \label{eq:GM3}
\mathbf{E} [ \det (z -  [UAU^{-1}]_k ) ]  = \mathbf{E}[ \det (z -  [\Sigma A \Sigma^{-1}]_k ) ] ,
\end{align}
where the determinants inside the expectations in \eqref{eq:GM3} are of $k \times k$ matrices. 

(We have used a different formulation in \eqref{eq:GM3} to equation (4) of \cite{GM}, which states that $\mathbf{E}[ \det( z - [UAU^{-1}]_k) ] =k!/N! ( \partial/\partial z )^{N-k} \prod_{i=1}^N (z - a_i)$. It is a simple combinatorial exercise to establish the equivalence of the two formulations.) 

For our purposes we will require multivariate versions of the quadrature formulas, suitable for arbitrary finite sums and products of matrices. We now state and prove the following generalisation of \eqref{eq:GM} and \eqref{eq:GM2}:

\begin{proposition} [Multivariate versions of the quadrature formulas]
Let $\Sigma_1,\ldots,\Sigma_d$ and $U_1,\ldots,U_d$ be independent $N \times N$ random matrices, where each $\Sigma_i$ is a uniform permutation matrix, and each $U_i$ is a Haar unitary random matrix, all independent. Given diagonal matrices $A_1,\ldots,A_d$, write
\begin{align*}
A^u_j := U_j A_j U^{-1}_j \qquad \text{and} \qquad A^s_j := \Sigma_j A_j \Sigma_j^{-1}.
\end{align*}
Let $z \in \mathbb{C}$. Then for addition we have
\begin{align} \label{eq:GM0m}
\mathbf{E} [  \det (z - (A_1^u + \ldots + A_d^u) ] = \mathbf{E}[  \det (z - (A_1^s + \ldots + A_d^s) ],
\end{align}
for multiplication we have
\begin{align} \label{eq:GM1m} 
\mathbf{E} \left[  \det \left(z - A_1^u \ldots A_d^u \right) \right] = \mathbf{E} \left[  \det \left(z - A_1^s \ldots A_d^s \right) \right], 
\end{align}
and for taking minors we have
\begin{align} \label{eq:GM2m}
\mathbf{E} [  \det (z - [A_1^u]_k )] = \mathbf{E} [  \det (z - [A_1^s]_k )].
\end{align}
\end{proposition}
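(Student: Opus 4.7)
The plan is to prove all three identities by induction on $d$, reducing each to the corresponding bivariate quadrature formula \eqref{eq:GM}, \eqref{eq:GM2}, or \eqref{eq:GM3}. The minor identity \eqref{eq:GM2m} involves only $A_1$, so it is literally \eqref{eq:GM3}. For the base case $d=2$ of \eqref{eq:GM0m} and \eqref{eq:GM1m}, I would absorb the ``extra'' conjugation via Haar invariance: the matrix $U_1 A_1 U_1^{-1} + U_2 A_2 U_2^{-1}$ has the same spectrum as $A_1 + V A_2 V^{-1}$ with $V := U_1^{-1} U_2$ still Haar distributed, and the permutation-conjugated analogue holds since $\Sigma_1^{-1}\Sigma_2$ is uniform on $\mathcal{S}_N$. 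The bivariate quadrature formulas then apply directly.

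First I would establish a conditional strengthening of the bivariate quadrature: for any Hermitian matrix $X$ with eigenvalue diagonal $D_X$ and any diagonal $B$,
\begin{align*}
\mathbf{E}_U[\det(z - X - UBU^{-1})] = \mathbf{E}_\Sigma[\det(z - D_X - \Sigma B \Sigma^{-1})],
\end{align*}
along with the multiplicative analogue. This follows by writing $X = V D_X V^{-1}$ for some unitary $V$, pulling it through the determinant, substituting $U' := V^{-1} U$ (still Haar), and applying \eqref{eq:GM} (resp.\ \eqref{eq:GM2}). The critical structural observation is that the right-hand side, expanded as $\frac{1}{N!}\sum_\sigma \prod_i (z - y_i - b_{\sigma(i)})$ with $y_i = (D_X)_{ii}$, is a \emph{linear} functional of the elementary symmetric polynomials $e_k(y_1, \ldots, y_N)$, and hence linear in the coefficients of $\det(z - X)$. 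A brief calculation grouping the product expansion by the subset $S \subseteq \{1, \ldots, N\}$ on which one picks the $-y_i$ factor yields
\begin{align*}
\frac{1}{N!}\sum_{\sigma \in \mathcal{S}_N} \prod_{i=1}^N (z - y_i - b_{\sigma(i)}) = \sum_{k=0}^N \frac{(-1)^k}{\binom{N}{k}} e_k(y_1,\ldots,y_N) \, e_{N-k}(z - b_1, \ldots, z - b_N),
\end{align*}
with an analogous multiplicative expansion. In short, both variants define a linear map $\Phi = \Phi_{z, B}$ acting on monic degree-$N$ polynomials.

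The inductive step now closes. Assume \eqref{eq:GM0m} for $d-1$, and set $Y := A_1^u + \ldots + A_{d-1}^u$ and $Y^s := A_1^s + \ldots + A_{d-1}^s$. Conditioning on $U_1, \ldots, U_{d-1}$ and applying the conditional identity to the $U_d$-integral gives $\mathbf{E}_{U_d}[\det(z - Y - U_d A_d U_d^{-1})] = \Phi(\det(z - Y))$; then linearity of $\Phi$ together with integration yields $\mathbf{E}[\det(z - (A_1^u + \ldots + A_d^u))] = \Phi(\mathbf{E}[\det(z - Y)])$. The analogous calculation on the permutation side, noting that $Y^s$ is itself diagonal so $D_{Y^s} = Y^s$, gives $\mathbf{E}[\det(z - (A_1^s + \ldots + A_d^s))] = \Phi(\mathbf{E}[\det(z - Y^s)])$. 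The inductive hypothesis $\mathbf{E}[\det(z - Y)] = \mathbf{E}[\det(z - Y^s)]$ then closes the induction, and the multiplicative case \eqref{eq:GM1m} is identical in structure using \eqref{eq:GM2}. The main obstacle is the linearity step, but this is essentially the well-known fact from finite free probability that additive and multiplicative finite free convolution are bilinear on the coefficients of degree-$N$ monic polynomials; once that is in hand, the induction threads through the expectation without issue.
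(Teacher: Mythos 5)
Your proof is correct and takes a genuinely different route from the paper's. The paper establishes the additive identity \eqref{eq:GM0m} by a word-flipping argument --- showing $F(w)=F(w')$ whenever $w,w'\in\{u,s\}^d$ differ in one coordinate, and chaining these flips from $(u,\ldots,u)$ to $(s,\ldots,s)$ --- while the multiplicative identity \eqref{eq:GM1m} is handled by a separate left-to-right induction $G_j\to G_{j+1}$ together with an ad hoc piece of Haar algebra to show the residual factor is unitarily invariant; \eqref{eq:GM2m} is read off from \eqref{eq:GM3}, as you note. Your approach instead runs a single clean induction on $d$ for both \eqref{eq:GM0m} and \eqref{eq:GM1m}: peel off the $d$-th conjugation, condition on the rest, reduce to a conditional bivariate quadrature via unitary diagonalization and Haar invariance, and then use the key structural observation that the resulting operator $\Phi_{z,B}$ is \emph{linear} in the coefficients of the characteristic polynomial --- which is exactly what lets $\Phi_{z,B}$ commute with the outer expectation and closes the induction. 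Your explicit expansion $\frac{1}{N!}\sum_\sigma\prod_i(z-y_i-b_{\sigma(i)})=\sum_k\binom{N}{k}^{-1}(-1)^k e_k(y)\,e_{N-k}(z-b_1,\ldots,z-b_N)$ is correct, as is the multiplicative analogue $\sum_k(-1)^k z^{N-k}\binom{N}{k}^{-1}e_k(y)e_k(b)$; these are just the bilinearity of $\boxplus_N$ and $\boxtimes_N$. What your route buys is uniformity across the two cases and a more transparent role for the linearity, at the cost of invoking that structural fact. One small point you should flesh out when claiming the multiplicative case is ``identical in structure'': the intermediate product $Y=A_1^u\cdots A_{d-1}^u$ is in general non-Hermitian and non-normal, so the phrase ``eigenvalue diagonal $D_Y$'' should be avoided; the correct statement is that $\mathbf{E}_U[\det(z-X\cdot UBU^{-1})]$ is a holomorphic polynomial in the entries of $X$, invariant under unitary (hence $GL_N$) conjugation, so it depends only on $\det(z-X)$ and is linear in its coefficients --- no unitary diagonalizability of $X$ is needed, and the conditional identity should be phrased accordingly.
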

\begin{proof}
First we prove \eqref{eq:GM0m}. 
Given a word $w = (w_1,\ldots,w_d) \in \{u,s\}^k$, (i.e. $w_j \in \{u,s\}$ for each $j$), write $F(w) := \mathbf{E}[  \det (z - \sum_{j=1}^d A^{w_j}_j ) ]$. 
Suppose that for some $1 \leq j \leq d$, $w_j = u$. Then using the shorthand $B := \sum_{k \neq j, 1 \leq k \leq d } A_k^{w_k}$, by the additive quadrature formula \eqref{eq:GM} we have
\begin{align*}
F(w) := \mathbf{E}\left[ \det \left( z - B - U_jA_j U_j^{-1} \right) \right] = \mathbf{E}\left[ \det \left( z - B - \Sigma_jA_j \Sigma_j^{-1} \right) \right] = F(w'),
\end{align*}
where $w' = (w'_1,\ldots,w'_d)$ has $w'_i = w_i$ for $i \neq j$, and $w'_j = s$. It follows that $F(w)$ does not depend on the word $w$. In particular, $F(u,\ldots,u) = F(s,\ldots,s)$, proving \eqref{eq:GM0m}.

As for \eqref{eq:GM1m}, for $1 \leq j \leq d$ let
\begin{align*}
G_j := \mathbf{E}[ \det (z - A_1^s \ldots A_j^s A_{j+1}^u \ldots A_d^u ) ].
\end{align*}
If $j < d$, let $A := A_1^s \ldots A_j^s$ and write $A_{j+1}^u \ldots A_d^u = U_{j+1}B U_{j+1}^{-1}$, so that $B = A_{j+1}U_{j+1}^{-1} A_{j+2}^u \ldots A_d^u  U_{j+1}$. We may write $B = A_{j+1} \tilde{A}_{j+2}^u \ldots \tilde{A}_d^u$, where for $j+2 \leq p \leq d$ we have $\tilde{A}_p^u = V_p A_p V_p^{-1}$ with $V_p :=  U_{j+1}^{-1}U_p$. By standard properties of Haar measure, each $V_p$ is Haar unitary, and the $V_{j+2},\ldots,V_d$ are independent of one another and of $U_{j+1}$. In particular, $B$ is a unitarily invariant Hermitian random matrix independent of $U_{j+1}$. By \eqref{eq:GM2} we have 
\begin{align} \label{eq:laplace}
G_j := \mathbf{E}\left[ \det (z - A_1^s \ldots A_j^s A_{j+1}^u \ldots A_d^u ) \right] = \mathbf{E}[\det(z-A U_{j+1}B U_{j+1}^{-1}) ] = \mathbf{E}[\det(z-A\Sigma_{j+1}B\Sigma_{j+1}^{-1})].
\end{align}
Write $\Sigma_{j+1}B\Sigma_{j+1}^{-1} = A_{j+1}^sC$, where $C = \Sigma_{j+1}\tilde{A}_{j+2}^u \ldots \tilde{A}_d^u \Sigma_{j+1}^{-1}$. Letting $\hat{A}_p^u := \Sigma_{j+1} \tilde{A}_p^u \Sigma_{j+1}^{-1}$, we see that $\hat{A}_{j+2}^u,\ldots,\hat{A}_d^u$ are independent of one another and of $A_1^s,\ldots,A_{j+1}^s$, so that the right-hand side of \eqref{eq:laplace} is equal to $G_{j+1}$. In particular, $G_j = G_{j+1}$, so that by induction, $G_0 = G_d$, proving \eqref{eq:GM1m}.

The final equation, \eqref{eq:GM2m}, is simply \eqref{eq:GM3} in the new notation. 
\end{proof}

\subsection{Proof set up of Theorem \ref{thm:main}}
Recall the quantile function $T_\mu:[0,1] \to \mathbb{R}$ defined in \eqref{eq:quantile}.
For each $N$, $1 \leq j \leq d$ consider the diagonal matrix
\begin{align}\label{eq:diag}
A_{N,j} := \mathrm{diag}(T_{\mu_j}(i/N) : i=1,\ldots,N).
\end{align}
As $N \to \infty$, the empirical spectrum of $A_{N,j}$ converges weakly to $\mu_j$.

As mentioned in the introduction, our strategy for proving Theorem \ref{thm:mainmulti} is based on an asymptotic analysis of the multivariate quadrature formulas \eqref{eq:GM0m}, \eqref{eq:GM1m} and \eqref{eq:GM2m} under the asymptotic regime \eqref{eq:diag}. Our next result is concerned with the asymptotic behaviour of the expectations over the unitary group occuring in these equations.

\begin{thm} \label{thm:UN}
Let $\mu_1,\ldots,\mu_d$ be probability measures on the real line with compact support, and let $A_{N,j}$ be as in \eqref{eq:diag}. Let $U_{N,j}$ be independent Haar unitary random matrices, and write $A^u_{N,j} := U_{N,j}A_{N,j}U_{N,j}^{-1}$. Then whenever $z > \sum_{j=1}^d E_{\mu_j}^+$ we have
\begin{align} \label{eq:UGM}
\lim_{N \to \infty} \frac{1}{N } \log \mathbf{E} [  \det (z - (A_{N,1}^u + \ldots + A_{N,d}^u) ]   =  \int_{-\infty}^\infty \log(z-x) ( \mu_1 \boxplus \ldots \boxplus \mu_d )(\mathrm{d}x) . 
\end{align}
Suppose that we additionally assume that $\mu_2,\ldots,\mu_d$ are supported on $[0,\infty)$. Then whenever $z > \prod_{j=1}^d E_{\mu_j}^+$ we have
\begin{align} \label{eq:UGM2}
\lim_{N \to \infty} \frac{1}{N } \log  \mathbf{E} [  \det (z - (A_{N,1}^u \cdot \ldots \cdot A_{N,d}^u) ]= \int_{-\infty}^\infty \log(z-x)   ( \mu_1 \boxtimes \ldots \boxtimes \mu_d )(\mathrm{d}x) . 
\end{align}
Finally, for $z > E^+_{\mu_1}$ we have 
\begin{align} \label{eq:UGM3}
 \lim_{N \to \infty} \frac{1}{\tau N } \log \mathbf{E} [ \det (z -  [A_{N,1}^u]_{[\tau N]} ) ]  = \int_{-\infty}^\infty  \log(z-x)  [\mu_1]_\tau (\mathrm{d}x) . 
\end{align}
\end{thm}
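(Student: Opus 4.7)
My plan is to treat all three limits through a common template: identify $\mathbf{E}[\det(z-M_N)]$ as a deterministic real-rooted polynomial coming from \emph{finite free probability}, invoke known weak convergence of its empirical root distribution to the appropriate free operation, and then promote this to convergence of the log-potential at $z$ above the top of the support via uniform root-location bounds.

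Setting $p_{N,j}(z) := \prod_{i=1}^N(z - T_{\mu_j}(i/N))$, the foundational Marcus--Spielman--Srivastava identities underpinning \eqref{eq:GM}--\eqref{eq:GM3} imply that each expected characteristic polynomial on the left-hand sides of \eqref{eq:UGM}--\eqref{eq:UGM3} equals, respectively, the iterated finite free additive convolution of $p_{N,1},\ldots,p_{N,d}$, the iterated finite free multiplicative convolution of the same, or the finite free compression at rank $\lfloor \tau N\rfloor$ of $p_{N,1}$. Each of these finite free operations preserves real-rootedness, so we may write the resulting polynomial as $q_N(z) = \prod_{i=1}^{\deg q_N}(z - \rho_i^{(N)})$ with all $\rho_i^{(N)} \in \mathbb{R}$, and rewrite each target limit as
\begin{equation*}
\frac{1}{\deg q_N}\log q_N(z) = \int_{-\infty}^\infty \log(z-x)\,\sigma_N(\mathrm{d}x), \qquad \sigma_N := \frac{1}{\deg q_N}\sum_i \delta_{\rho_i^{(N)}}.
\end{equation*}
The core finite free probability input, available through the work of Arizmendi, Perales and successors on asymptotics of finite free convolutions, asserts that $\sigma_N$ converges weakly to $\sigma_\infty$, equal to $\mu_1\boxplus\cdots\boxplus\mu_d$, $\mu_1\boxtimes\cdots\boxtimes\mu_d$, or $[\mu_1]_\tau$ in the three cases (here using that each $p_{N,j}$ has empirical root distribution converging to $\mu_j$ by construction of $A_{N,j}$).

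The main obstacle is upgrading weak convergence to convergence of $\int\log(z-x)\,\sigma_N(\mathrm{d}x)$, which requires the roots of $q_N$ to be uniformly contained in a compact interval $K\subset(-\infty,z)$ on which $x\mapsto\log(z-x)$ is bounded and continuous. I would establish this by a sign-based argument: for every realisation of the unitary matrices, Weyl's inequality (additive case), the operator-norm bound $\|A_1\cdots A_d\|_{\mathrm{op}}\leq \prod_j\|A_j\|_{\mathrm{op}}$ combined with reality of the spectrum of $A_1\cdots A_d$ when $A_2,\ldots,A_d\geq 0$ (multiplicative case), or Cauchy interlacing (minor case) confines the real eigenvalues of $M_N$ to a deterministic interval $K$ whose right endpoint is at most $\sum_j E^+_{\mu_j}$, $\prod_j E^+_{\mu_j}$, or $E^+_{\mu_1}$ respectively. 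On $(\sup K,\infty)$ the polynomial $\det(z-M_N)$ is pointwise positive; taking expectations preserves this, so the real roots of $q_N$ also lie in $K$. The hypotheses on $z$ then ensure $z>\sup K$, placing $\mathrm{supp}(\sigma_N)\cup\mathrm{supp}(\sigma_\infty)$ in a fixed compact subset of $(-\infty,z)$ on which $\log(z-x)$ is continuous and bounded, so weak convergence $\sigma_N\to\sigma_\infty$ yields convergence of the integrals, completing the proof of all three parts.
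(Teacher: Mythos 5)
Your proposal is correct and follows essentially the same route as the paper: identify the expected characteristic polynomial as the finite free convolution/compression polynomial of Marcus--Spielman--Srivastava, invoke the Arizmendi--Perales / Arizmendi--Garza-Vargas--Perales / Marcus results on weak convergence of finite free operations to their free-probabilistic limits (the paper's Theorem 4.3), and pass to the limit of the log-potential using uniform boundedness of the supports. The one place where you add content beyond the paper's terse wording is the sign-preservation argument (Weyl / operator-norm / Cauchy interlacing plus positivity of $\det(z-M_N)$ for $z>\sup K$, preserved under expectation) pinning the roots of the expected characteristic polynomial in a fixed compact interval; the paper relegates this to the blanket hypothesis ``supported on a uniformly bounded subset'' in its Theorem 4.3 and a one-line appeal to boundedness of $x\mapsto\log(z-x)$, so your explicit argument is a useful filling-in of that step rather than a different proof.
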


Theorem \ref{thm:UN} is proved in Section \ref{sec:Uconv} using results in the finite free probability literature.

Our next result is parallel to Theorem \ref{thm:UN}, but with the expectations instead taken over the symmetric group.

\begin{thm}\label{thm:SN}
Let $\mu_1,\ldots,\mu_d$ be probability measures on the real line with compact support, and let $A_{N,j}$ be as in \eqref{eq:diag}. Let $\Sigma_{N,j}$ be independent uniform permutation matrices, and write $A^s_{N,j} := \Sigma_{N,j}A_{N,j}\Sigma_{N,j}^{-1}$. Recall from \eqref{eq:Hdef} that $H(\gamma)$ is the entropy of a copula. Then whenever $z > \sup_{ s \in [0,1]^d} \sum_{j=1}^d T_{\mu_j}(s_j)$ we have
\begin{align} \label{eq:SGM}
\lim_{N \to \infty} \frac{1}{N } \log \mathbf{E} [  \det (z - (A_{N,1}^s +  \ldots + A_{N,d}^s) ]  &=\sup_{\gamma} \left\{ \int_{[0,1]^d}  \log \left(z - \sum_{j=1}^d T_{\mu_j}(s_j) \right) \gamma(s)\mathrm{d}s  - H(\gamma) \right\}.
\end{align}
If $z > \sup_{ s \in [0,1]^d} \prod_{j=1}^d T_{\mu_j}(s_j)$ then
\begin{align} \label{eq:SGM2}
\lim_{N \to \infty} \frac{1}{N } \log \mathbf{E} [  \det (z - (A_{N,1}^s \cdot \ldots \cdot A_{N,d}^s) ]  &=\sup_{\gamma} \left\{ \int_{[0,1]^d}  \log \left(z - \prod_{j=1}^d T_{\mu_j}(s_j) \right) \gamma(s)\mathrm{d}s  - H(\gamma) \right\}.
\end{align}
Whenever $z > \sup_{s \in [0,1]}T_{\mu_1}(s)$ we have 
\begin{align} \label{eq:SGM3}
\lim_{N \to \infty} \frac{1}{\tau N } \log \mathbf{E}[ \det (z -  [A_{N,1}^s]_{[\tau N]} ) ]=\sup_{\gamma} \left\{ \int_{[0,1]^2} \log(z-T_{\mu_1}(s_1))\mathrm{1}_{\{ s_2 > 1-\tau \}} \gamma(s)\mathrm{d}s - H(\gamma) \right\}.
\end{align}

\end{thm}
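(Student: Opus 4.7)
The strategy is to rewrite each expectation on the left-hand sides of \eqref{eq:SGM}--\eqref{eq:SGM3} as $\mathbf{E}[\exp(N \int f \, \mathrm{d}\mu_N)]$, where $\mu_N$ is a random empirical copula on a hypercube, and then apply Varadhan's lemma together with a large deviation principle for $\mu_N$.

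First I would use the key observation that conjugating a diagonal matrix by a permutation matrix only permutes its diagonal entries: if $\sigma_{N,j} \in S_N$ is the permutation associated with $\Sigma_{N,j}$, then $A_{N,j}^s$ is diagonal with $i$-th entry $T_{\mu_j}(\sigma_{N,j}^{-1}(i)/N)$. Consequently, in the additive case,
\[
\det\Bigl(z - \textstyle\sum_{j=1}^d A^s_{N,j}\Bigr) = \prod_{i=1}^N \Bigl(z - \sum_{j=1}^d T_{\mu_j}(\sigma^{-1}_{N,j}(i)/N)\Bigr) = \exp\!\Bigl(N\!\int_{[0,1]^d}\! f_+\,\mathrm{d}\mu_N\Bigr),
\]
where $f_+(s) := \log(z - \sum_j T_{\mu_j}(s_j))$ and $\mu_N := \tfrac{1}{N}\sum_{i=1}^N \delta_{(\sigma^{-1}_{N,1}(i)/N,\ldots,\sigma^{-1}_{N,d}(i)/N)}$ is a random empirical measure on $[0,1]^d$.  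The multiplicative case is identical with $f_\times(s) := \log(z - \prod_j T_{\mu_j}(s_j))$ in place of $f_+$.  In the minor case (with $d=2$) the determinant factorises as $\prod_{i=1}^{[\tau N]}(z - T_{\mu_1}(\sigma^{-1}_{N,1}(i)/N))$, which becomes $\exp(N \int f_\tau \, \mathrm{d}\mu_N)$ with $f_\tau(s) := \log(z - T_{\mu_1}(s_1))\,\mathrm{1}_{\{s_2 \leq [\tau N]/N\}}$; the involution $s_2 \mapsto 1-s_2$ on copulas (which preserves $H$) then recasts the limit in the form of \eqref{eq:SGM3}.

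Second, I would invoke a large deviation principle for $\mu_N$ on the space of copulas on $[0,1]^d$, equipped with the weak topology, at speed $N$ and with good rate function $H(\gamma)$ from \eqref{eq:Hdef}.  For $d = 2$ this is the Kenyon--Kral--Radin--Winkler permuton LDP \cite{KKRW}, applied after re-indexing $i \mapsto \sigma_{N,1}(i)$ to reduce our two-permutation model to the empirical permuton of the single uniform permutation $\sigma_{N,2}\circ\sigma_{N,1}^{-1}$.  For $d \geq 3$ one iterates this reduction, conditioning successively on $\sigma_{N,1},\ldots,\sigma_{N,d-1}$; the correct rate function $H(\gamma)$ then arises from the tensorisation of relative entropy across the conditional steps.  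Boundedness of the test functions $f_+, f_\times, f_\tau$, needed to apply Varadhan's lemma, comes exactly from the stated hypotheses: for instance $z > \sup_s \sum_j T_{\mu_j}(s_j)$ ensures that $z - \sum_j T_{\mu_j}(s_j)$ is bounded and bounded away from zero uniformly on $[0,1]^d$.  Varadhan's lemma then yields $\frac{1}{N}\log\mathbf{E}[\exp(N\int f\,\mathrm{d}\mu_N)] \to \sup_\gamma\{\int f\,\mathrm{d}\gamma - H(\gamma)\}$ for each of the three choices of $f$, which after accounting for the normalisation (in particular the factor $\tau$ in the minor case) gives \eqref{eq:SGM}--\eqref{eq:SGM3}.

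The principal technical obstacles are twofold.  First, the quantile functions $T_{\mu_j}$ are only left-continuous (with jumps at the atoms of $\mu_j$), and $f_\tau$ additionally carries a discontinuous indicator; hence $f_+, f_\times, f_\tau$ are in general not continuous on $[0,1]^d$.  One must therefore apply the semi-continuous variant of Varadhan's lemma, or sandwich each test function between continuous approximations from above and below and exploit the convexity and lower-semicontinuity of the rate function $H$.  Second, and more substantively, establishing the $d$-variate LDP at speed $N$ with rate $H(\gamma)$ for $d \geq 3$ is delicate: the sequential conditioning argument sketched above requires uniformity of the conditional LDPs, and one may alternatively need a direct Sanov-type combinatorial argument counting $d$-tuples of permutations with prescribed joint empirical.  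These are the main places where care is needed; once they are settled, the remainder of the argument proceeds routinely.
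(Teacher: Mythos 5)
Your overall strategy is exactly the paper's: express $\det(z - A_{N,1}^s \odot\cdots\odot A_{N,d}^s)$ as $\exp(N\langle F^\odot_{\bm\mu},\tilde\gamma_N\rangle)$ for the empirical measure of the permutation tuple, invoke the Kenyon--Kr\'al'--Radin--Winkler LDP on copulas, and apply Varadhan's lemma; the minor case is then reduced to the $d=2$ multiplicative case with a two-point second measure (your $\mathrm{1}_{\{s_2 \leq \tau\}}$ formulation is, after the reflection, the same as the paper's choice $\mu_2 = (1-\tau)\delta_0 + \tau\delta_1$). You also correctly flag the two real obstacles: discontinuity of the quantile functions, and the $d\geq 3$ LDP.

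The genuine gap is in your discontinuity fix. Writing ``sandwich each test function between continuous approximations and exploit lower-semicontinuity of $H$'' is not enough: after applying Varadhan with a continuous approximation $F^\varepsilon$, one obtains $\sup_\gamma\{\langle F^\varepsilon,\gamma\rangle - H(\gamma)\}$, and to recover $\sup_\gamma\{\langle F,\gamma\rangle - H(\gamma)\}$ one needs $\sup_\gamma|\langle F^\varepsilon - F,\gamma\rangle| \to 0$ \emph{uniformly over all copulas $\gamma$}, not just pointwise or in $L^1$ of Lebesgue. A bounded measurable function with a discontinuity set of positive capacity in the copula-uniform sense would defeat a generic sandwich. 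The paper's resolution relies on structure you do not use: it replaces $T_{\mu_j}$ by the averaged quantile $T_{\mu_j}^\varepsilon(t) := \tfrac1\varepsilon\int_0^\varepsilon T_{\mu_j}(t+u)\,\mathrm du$, bounds $|F^\odot_{\bm\mu^\varepsilon} - F^\odot_{\bm\mu}|$ by a \emph{sum of univariate} functions $\sum_j r^\varepsilon_{\mu_j}(s_j)$, and then observes that for any copula $\gamma$ the integral $\int r^\varepsilon_{\mu_j}(s_j)\,\gamma(\mathrm ds) = \int_0^1 r^\varepsilon_{\mu_j}$ is determined by the marginal alone and is $O(\varepsilon)$ (Lemmas 5.5--5.6). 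Without this univariate-domination-plus-marginal observation, your sandwich does not close. Relatedly, one also needs to pass from the atomic $\tilde\gamma_N$ (which carries $\langle F,\cdot\rangle$) to the piecewise-constant density copula $\gamma_N$ (which satisfies the LDP), a step the paper makes precise in Lemma 5.4 and which you omit. On the $d\geq 3$ LDP, your chain-rule/conditioning sketch is plausible but unproved; the paper instead gives a direct Sanov-type count of $d$-tuples of permutations (Appendix B), which you correctly name as the alternative.
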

Theorem \ref{thm:SN} is proved in Section \ref{sec:Sconv} using tools from large deviation theory.

We remark that \eqref{eq:SGM2} has weaker conditions than its unitary group analogue, \eqref{eq:UGM2}: the former requires only that $\mu_1,\ldots,\mu_d$ have compact support, the latter that $\mu_2,\ldots,\mu_d$ are also supported on $[0,\infty)$.

In any case, we now note that Theorem \ref{thm:main2} (which itself implies Theorem \ref{thm:main} and Theorem \ref{thm:mainmulti}) now follows as a corollary of Theorem \ref{thm:UN}, Theorem \ref{thm:SN}, and the quadrature formulas:

\begin{proof}[Proof of Theorem \ref{thm:main2} assuming Theorem \ref{thm:UN} and Theorem \ref{thm:SN}]
By \eqref{eq:GM0m}, \eqref{eq:GM1m} and \eqref{eq:GM2m}, the respective left-hand sides of \eqref{eq:UGM}, \eqref{eq:UGM2} and \eqref{eq:UGM3} are the same as the respective left-hand sides of \eqref{eq:SGM}, \eqref{eq:SGM2} and \eqref{eq:SGM3}. Comparing the right-hand sides now yields Theorem \ref{thm:main2}.
\end{proof}

Thus in order to complete the proof of our main results, our task in the next two sections is to prove Theorem \ref{thm:UN} and Theorem \ref{thm:SN}.

\section{Convergence of the unitary group side} \label{sec:Uconv}
In this section we prove Theorem \ref{thm:UN}.

\subsection{A preliminary inequality} \label{eq:Uineq}

Before giving a full proof of Theorem \ref{thm:UN}, let us note first that by using Jensen's inequality and the compactness of the supports of $\mu_1,\ldots,\mu_d$, we can use the convergence in \eqref{eq:aconv}, \eqref{eq:mconv} and \eqref{eq:cconv} to prove an inequality holds in the settings of each of \eqref{eq:UGM}, \eqref{eq:UGM2} and \eqref{eq:UGM3}.

For example, in the setting of \eqref{eq:UGM}, writing $\Omega_N$ for the empirical spectrum of $A_{N,1}^u + \ldots + A_{N,d}^u$, by Jensen's inequality we have 
\begin{align} \label{eq:UGM1pre}
\frac{1}{N } \log \mathbf{E}[   \det (z - (A_{N,1}^u + \ldots + A_{N,d}^u))  ] &= \frac{1}{N} \log \mathbf{E}\left[ \exp \left\{ N \int_{-\infty}^\infty \log(z-x) \Omega_N(\mathrm{d}x) \right\} \right] \nonumber \\
& \geq  \mathbf{E} \left[ \int_{-\infty}^\infty \log(z-x) \Omega_N(\mathrm{d}x) \right]. 
\end{align} 
Now since $\Omega_N$ converges almost-surely to $ \mu_1 \boxplus \ldots \boxplus \mu_d$ in the weak topology, and the functional $x \mapsto \log(z-x)$ is continuous and uniformly bounded on the possible support of $\Omega_N$, it follows that 
\begin{align} \label{eq:UGMlower}
\lim \inf_{N \to \infty} \frac{1}{N } \log \mathbf{E}[   \det (z - (A_{N,1}^u + \ldots + A_{N,d}^u)  ] \geq \int_{-\infty}^\infty \log(z-x) ( \mu_1 \boxplus \ldots \boxplus \mu_d )(\mathrm{d}x),
\end{align}
establishing a lower bound in the setting of \eqref{eq:UGM}.

Similar arguments establish lower bounds in the settings of \eqref{eq:UGM2} and \eqref{eq:UGM3}.

We note that a bound analogous to \eqref{eq:UGMlower} can be proved relating the asymptotic expected characteristic polynomial of $A_{N,1}^s + \ldots + A^s_{N,d}$ with the multiple classical convolution $\mu_1 \ast \ldots \ast \mu_d$. This bound may be used in conjunction with Theorem \ref{thm:UN} (but \emph{without} requiring Theorem \ref{thm:SN}) to give an alternative proof of a multivariate analogue of Corollary \ref{cor:ineq}.

\subsection{Finite free probability}
We now give a complete proof of Theorem \ref{thm:UN} using tools from finite free probability.

Given Hermitian matrices $A_N$ and $B_N$ with real eigenvalues $a_1,\ldots,a_N$ and $b_1,\ldots,b_N$ write
\begin{align} \label{eq:d2d}
\mu_N := \frac{1}{N} \sum_{i=1}^N \delta_{a_i} \qquad \text{and} \qquad \nu_N := \frac{1}{N} \sum_{i=1}^N \delta_{b_i}
\end{align}
for their respective empirical spectra.

Among other things, \textbf{finite free probability} is concerned with three operations $\mu_N \boxplus_N \nu_N$, $\mu_N \boxtimes_N \nu_N$ and $[\mu_N]_k$ on real probability measures of the form \eqref{eq:d2d} (i.e.\ a sum of $N$ real Dirac masses of size $1/N$) that create a new probability measure of this same form. 

We consider first finite additive free convolution $\mu_N \boxplus_N \nu_N$. If we consider the expectation of the characteristic polynomial of $A_N + U_N B_N U_N^{-1}$ where $U_N$ is Haar unitary, we find if we write
\begin{align*}
\Delta(z) := \mathbf{E} [ \det(z - (A_N + U_N B_N U_N^{-1})) ]  = \prod_{i=1}^N (z - c_i),
\end{align*}
then the roots $c_1,\ldots,c_N$ of $\Delta(z)$ are also real \cite{MSS3}.  We now define the finite additive free convolution $\mu_N \boxplus \nu_N$ to be the empirical measure of these roots:
\begin{align*}
\mu_N \boxplus_N \nu_N := \frac{1}{N} \sum_{i=1}^N \delta_{c_i}.
\end{align*}
By considering instead the expected characteristic polynomial of $A_{N,1}^u + \ldots + A_{N,d}^u$, we can also define the multivariate additive finite free convolution
\begin{align*}
\mu_{N,1} \boxplus_N \ldots \boxplus_N \mu_{N,d} 
\end{align*}
of $d$ probability measures, each a sum of $N$ equal Dirac masses.

Alternatively, again in the setting of \eqref{eq:d2d}, if $B_N$ is positive semidefinite, write $d_1,\ldots,d_N$ for the roots of the expected characteristic polynomial of  $A_N \cdot U_N B_N U_N^{-1}$, that is
\begin{align} \label{eq:multchar}
 \mathbf{E} [ \det(z - A_N \cdot U_N B_N U_N^{-1} ) ]= \prod_{i=1}^N (z - d_i).
\end{align}

Then it turns out that the $d_i$ are also real. As above, we define 
\begin{align*}
\mu_N \boxtimes_N \nu_N := \frac{1}{N} \sum_{i=1}^N \delta_{d_i}
\end{align*}
to be the empirical measure of the $d_i$. We call the operation $\boxtimes_N$ the multiplicative finite free convolution of $\mu_N$ and $\nu_N$. Again, we can define the multivariate multiplicative finite free convolution $\mu_{N,1} \boxtimes \ldots \boxtimes \mu_{N,d}$ whenever $\mu_{N,2},\ldots,\mu_{N,d}$ are supported on $[0,\infty)$. 

We mention here that since the product of Hermitian matrices need not be Hermitian, in the setting of \eqref{eq:multchar} some authors prefer to consider the Hermitian matrix $B_N^{1/2} U_N A_N U_N^{-1} B_N^{1/2}$, which has the same eigenvalues as $A_NB_N$. In order to maintain simple operations and cleaner formulas, particularly in the multivariate case, we eschew this convention.

Finally, let $[U_NA_NU_N^{-1}]_k$ denote the principal $k\times k$ minor of $U_NA_NU_N^{-1}$, where $A_N$ has empirical spectrum $\mu_{N,1}$. Write $e_1,\ldots,e_k$ for the roots of the characteristic polynomial of this minor, so that 
\begin{align*}
 \mathbf{E} [ \det(z - [ U_N A_N U_N^{-1}]_k ) ]= \prod_{i=1}^k (z - e_i).
\end{align*}
Again, the $e_i$ are real. Now define a probability measure
\begin{align*}
[\mu_{N,1}]_k := \frac{1}{k} \sum_{i=1}^k \delta_{e_i},
\end{align*}
which we call the finite free compression of $\mu_{N,1}$. 

Our following theorem encompasses several results in the literature, characterising how the operations of finite free probability behave asymptotically as the constituent measures involved converge.

\begin{thm}[ \cite{AP}, \cite{AGP}, \cite{HK}, \cite{Mar21}] \label{thm:ff}
For $1 \leq j \leq d$ and $N \geq 1$ let $\mu_{N,j}$ be a sequence of measures of the form \eqref{eq:d2d} satisfying the weak convergence
\begin{align*}
\mu_{N,j} \to \mu_j, 
\end{align*}
where $\mu_j$ are probability measures with compact support. Suppose further that for each $j$, the sequence $(\mu_{N,j})_{N \geq 1}$ is supported on a uniformly bounded subset of the real line. 

Then for finite additive free convolution we have the weak convergence
\begin{align} \label{eq:ffadd}
\mu_{N,1} \boxplus_N \ldots \boxplus_N \mu_{N,d}  \to \mu_{1} \boxplus \ldots \boxplus \mu_{d} .
\end{align}

For finite multiplicative free convolution we have the weak convergence 
\begin{align} \label{eq:ffmult}
\mu_{N,1} \boxtimes_N \ldots \boxtimes_N \mu_{N,d}  \to \mu_{1} \boxtimes \ldots \boxtimes \mu_{d} .
\end{align}

Finally, for finite free compression we have the weak convergence, for $\tau \in (0,1)$,
\begin{align} \label{eq:ffcomp}
[\mu_{N,1}]_{\lfloor \tau N \rfloor} \to [\mu_1]_\tau.
\end{align}
\end{thm}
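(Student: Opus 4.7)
The plan is to prove each of \eqref{eq:ffadd}, \eqref{eq:ffmult}, and \eqref{eq:ffcomp} by reducing the weak convergence of measures to convergence of an appropriate analytic transform, exploiting the fact that under the uniform support hypothesis weak convergence is equivalent to convergence of moments. The multivariate statements of \eqref{eq:ffadd} and \eqref{eq:ffmult} would then follow inductively from their bivariate cases, once one checks that $\boxplus_N$ and $\boxtimes_N$ preserve the class of measures whose supports lie in a fixed compact set. This last preservation can be established by standard bounds on the roots of the expected characteristic polynomials used to define these operations.

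For \eqref{eq:ffadd}, the key object would be the \emph{finite $R$-transform} $R^N_{\mu_N}(s)$, a formal power series whose coefficients are explicit polynomial functions of the moments of $\mu_N$, and which satisfies the additivity identity $R^N_{\mu_N \boxplus_N \nu_N} = R^N_{\mu_N} + R^N_{\nu_N}$. First I would show that each coefficient of $R^N_{\mu_N}$ converges as $N \to \infty$ to the corresponding classical Voiculescu free cumulant of $\mu$, and then use the uniform support bound to upgrade this to uniform convergence on a fixed disc about the origin. Passing to the limit in the additivity identity gives $R_{\mu \boxplus \nu} = R_\mu + R_\nu$, and the standard inversion between $R$-transform and Cauchy transform then yields weak convergence. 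The multiplicative convergence \eqref{eq:ffmult} proceeds identically with a finite $S$-transform satisfying $S^N_{\mu_N \boxtimes_N \nu_N} = S^N_{\mu_N}\, S^N_{\nu_N}$.

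For the compression case \eqref{eq:ffcomp}, I would instead use the explicit identity
\begin{align*}
\mathbf{E}\bigl[\det(z - [U A U^{-1}]_k)\bigr] = \frac{k!}{N!}\,\frac{\partial^{N-k}}{\partial z^{N-k}}\,\prod_{i=1}^N(z-a_i),
\end{align*}
which realises finite free compression as the polynomial operation of repeated differentiation. Writing the logarithmic derivative of the resulting polynomial as a scaled Cauchy transform, the asymptotic analysis of $\partial_z^{N-k}$ with $k = \lfloor \tau N \rfloor$ reduces to a fixed-point equation for the limiting Cauchy transform which matches the standard subordination description of $[\mu]_\tau$.

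The main obstacle in all three cases will be upgrading coefficient-wise or pointwise convergence of the various transforms to a form strong enough to imply weak convergence; this requires uniform-in-$N$ analyticity on a common domain, which is exactly what the uniform support hypothesis delivers through uniform moment bounds. A secondary technical point arises in the multiplicative case: the $S$-transform degenerates for measures with zero first moment, so one must track this non-degeneracy condition carefully along the approximating sequence $\mu_{N,j} \to \mu_j$, and handle separately (by symmetrisation or a small shift) the boundary situation where $\mu_j$ has mean zero.
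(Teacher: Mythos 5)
The paper does not actually prove Theorem \ref{thm:ff}; it cites it from the finite-free-probability literature and provides explicit references, so there is no internal ``paper proof'' to compare against line by line. That said, your sketch for \eqref{eq:ffadd} and \eqref{eq:ffmult} is essentially the approach of the cited works: Arizmendi--Perales \cite{AP} and Arizmendi--Garza-Vargas--Perales \cite{AGP} develop precisely the finite free cumulants you call the coefficients of a finite $R$- (resp.\ $S$-) transform, prove the additivity/multiplicativity identities for $\boxplus_N$ and $\boxtimes_N$, and show coefficient-wise convergence to Voiculescu's free cumulants, which under the uniform support hypothesis gives moment convergence and hence weak convergence. Your observation that $\boxplus_N$ and $\boxtimes_N$ preserve uniform compact support is correct (it follows from the MSS real-rootedness together with the quadrature expression of the expected characteristic polynomial as an average of real-rooted polynomials with roots in a fixed interval) and is the right thing to check for the inductive reduction to $d=2$. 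One small simplification you could make: you do not need to assemble the finite free cumulants into a convergent analytic $R$-transform and invert; since each moment of $\mu_{N,1}\boxplus_N\cdots\boxplus_N\mu_{N,d}$ is an explicit universal polynomial in the finite free cumulants, coefficient-wise convergence of cumulants already gives convergence of moments directly.

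Where your route genuinely diverges from the paper is \eqref{eq:ffcomp}. You propose a direct asymptotic analysis of the repeated-differentiation formula, in the spirit of Hoskins--Kabluchko \cite{HK} and Steinerberger. The paper instead takes a shortcut that reduces compression to a special instance of \eqref{eq:ffmult}: if $B_N$ is the diagonal $\{0,1\}$-matrix with $k=\lfloor\tau N\rfloor$ ones, then $\det(z - B_N\, U A_N U^{-1}) = z^{N-k}\det\bigl(z - [U A_N U^{-1}]_k\bigr)$, so $[\mu_{N,1}]_k$ is determined (up to the explicit Dirac mass at zero) by $\mu_{N,1} \boxtimes_N \bigl(\tfrac{k}{N}\delta_1 + \tfrac{N-k}{N}\delta_0\bigr)$, and one passes to the limit using \eqref{eq:ffmult} together with the classical identity $\mu\boxtimes((1-\tau)\delta_0+\tau\delta_1) = (1-\tau)\delta_0 + \tau[\mu]_\tau$. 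This reduction is cleaner than reanalysing the saddle-point for $\partial_z^{N-k}$, and it neatly sidesteps the degeneracy concern you raise for the $S$-transform, since the auxiliary measure $(1-\tau)\delta_0+\tau\delta_1$ has mean $\tau>0$ for every $\tau\in(0,1)$. Both routes arrive at the same place, but the paper's reduction lets you prove three statements with two analytic arguments rather than three.
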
 

We now give explicit references for Theorem \ref{thm:ff}. It is a straightforward exercise by induction to show that if \eqref{eq:ffadd} and \eqref{eq:ffmult} hold for the case $d=2$, then they hold for all $d \geq 2$. Thus it suffices to provide references for the case $d=2$. The first equation, \eqref{eq:ffadd}, is Corollary 5.5 of Arizmendi and Perales \cite{AP}. The next equation, \eqref{eq:ffmult}, is Theorem 1.4 of Arizmendi, Garza-Vargas and Perales \cite{AGP}. We should say that both of these results are implicit in the earlier work of Marcus \cite{Mar21}. The third equation, \eqref{eq:ffcomp}, follows after a brief calculation from the case $d=2$ of the second: in the setting of \eqref{eq:d2d} one can let $A_N$ be the diagonal $N \times N$ matrix whose first $k$ diagonal entries are $1$s, and the remaining $N-k$ are $0$s. See \cite{AGP} for further details on this calculation, as well as work by Hoskins and Kabluchko \cite{HK} and by Steinerberger \cite{steinerberger1, steinerberger2} for further discussion of finite free compression.

Theorem \ref{thm:UN} now follows as a simple consequence of Theorem \ref{thm:ff}:
\begin{proof}[Proof of Theorem \ref{thm:UN}]
We prove \eqref{eq:UGM}; the proofs of \eqref{eq:UGM2} and \eqref{eq:UGM3} are almost identical.

Note that from the definition of finite additive free convolution that
\begin{align*}
\frac{1}{N } \log \mathbf{E} [  \det (z - (A_{N,1}^u + \ldots + A_{N,d}^u) ]   =  \int_{-\infty}^\infty \log(z-x) ( \mu_{N,1} \boxplus_N \ldots \boxplus_N \mu_{N,d} )(\mathrm{d}x) . 
\end{align*}
Now note that $x \mapsto \log(z-x)$ is continuous and uniformly bounded on the supports of $ \mu_{N,1} \boxplus_N \ldots \boxplus_N \mu_{N,d}$, and thus by the weak convergence of $ \mu_{N,1} \boxplus_N \ldots \boxplus_N \mu_{N,d}$ to $\mu_{1} \boxplus \ldots \boxplus \mu_{d} $, we obtain \eqref{eq:UGM}. 
\end{proof}

\subsection{The large deviations of the empirical spectrum of $A_N + U_NB_NU_N^{-1}$}

We close this section by sketching an alternative approach to proving the $d=2$ case of the first equation \eqref{eq:UGM} of Theorem \ref{thm:UN} using the recent work on the large deviations behaviour of the empirical spectrum of $A_N+U_NB_NU_N^{-1}$ described in \eqref{eq:fern2}. 

The crucial point is that the large deviations of the empirical spectrum $\Omega_N$ of $A_N+ U_NB_NU_N^{-1}$ occur at rate $N^2$, and thus we take as a starting point an upper bound of the form
\begin{align} \label{eq:joker}
\mathbf{P} \left( \Omega_N \notin B_\delta(\mu \boxplus \nu) \right) \leq C_\delta e^{ - c_\delta N^2} \qquad c_\delta,C_\delta > 0,
\end{align}
where $B_\delta(\pi)$ denotes the ball of radius $\delta$ around a probability measure $\pi$ in a suitable metric. The bound \eqref{eq:joker} holds as a consequence of the results in either \cite{BGH} or \cite{NS2}. 

Provided \eqref{eq:joker} holds, we have
\begin{align*}
\mathbf{E} [ \det(z - (A_N + U_N B_N U_N^{-1})) ]  \leq  \exp \left\{ N \sup_{ \omega \in B_\delta(\mu \boxplus \nu) } \int_{-\infty}^\infty \log(z -x ) \omega(\mathrm{d}x) \right\} + C_\delta e^{ - c_\delta N^2} e^{ CN} 
\end{align*}
where we can take $C := \sup_{ x \in [E_\mu^-,E_\mu^+] , y \in [E_\nu^-,E_\nu^+] } |\log (z - (x+y))|$. 

In particular, for any $\delta > 0$, we have 
\begin{align*}
\lim \sup_{N \to \infty} \frac{1}{N} \log\mathbf{E} [ \det(z - (A_N + U_N B_N U_N^{-1})) ]  \leq   \sup_{ \omega \in B_\delta(\mu \boxplus \nu) } \int_{-\infty}^\infty \log(z -x ) \omega(\mathrm{d}x).
\end{align*}

Since $\delta$ is arbitrary, and $\omega \mapsto \int_{-\infty}^\infty \log(z-x) \omega(\mathrm{d}x)$ is a continuous functional on probability measures, we obtain 
\begin{align} \label{eq:UGMupper}
\lim \sup_{N \to \infty} \frac{1}{N} \log\mathbf{E} [ \det(z - (A_N + U_N B_N U_N^{-1})) ]  \leq \int_{-\infty}^\infty \log(z -x ) (\mu \boxplus \nu) (\mathrm{d}x).
\end{align}
Combining \eqref{eq:UGMupper} with the lower bound in \eqref{eq:UGMlower}, we obtain \eqref{eq:UGM}.

\section{Convergence of the symmetric group side} \label{sec:Sconv}

In this section we prove Theorem \ref{thm:SN}. Through this section we will use the notation $C_{z,\bm \mu}$ for a constant depending on $z$ and on a $d$-tuple $\bm \mu$ of probability measures that may change from line to line. 

\subsection{Preliminaries}
Recall from \eqref{eq:quantile} that $T_\mu$ denotes the quantile function associated with $\mu$. 
Recall that we use the notation $\odot$ as a placeholder denoting either addition or multiplication. 
That is, $\odot := + \qquad \text{or} \qquad \odot := \times$. To lighten notation, we assume the operation $\odot$ is always to be performed first, so that in particular
\begin{align*}
z - x_1 \odot \ldots \odot x_d := z - (x_1 \odot \ldots \odot x_d ).
\end{align*}

Let $\bm{\sigma}_N = (\sigma_{N,1},\ldots,\sigma_{N,d})$ be a $d$-tuple of permutations of $\{1,\ldots,N\}$. We associate with $\bm \sigma_N$ two different probability measures on $[0,1]^d$ by setting
\begin{align} \label{eq:gamma}
\gamma_N(\mathrm{d}s) = \gamma^{\bm{\sigma}_N}(\mathrm{d}s)  := N^{d-1} \sum_{i=1}^N \mathrm{1}\left\{ s_j \in \left( \frac{\sigma_j(i)-1}{N} , \frac{\sigma_j(i)}{N} \right] ~~ \forall j=1,\ldots,d \right\} \mathrm{d}s,
\end{align}
and
\begin{align} \label{eq:tildegamma}
\tilde{\gamma}_N(\mathrm{d}s) = \tilde\gamma^{\bm{\sigma}_N}(\mathrm{d}s)  := \frac{1}{N} \sum_{i=1}^N \delta_{(\sigma_1(i)/N,\ldots,\sigma_d(i)/N)}(\mathrm{d}s).
\end{align}
Note that $\gamma^{\bm \sigma_N}$ is a copula, but $\tilde\gamma^{\bm{\sigma}_N}$ is not. When $\bm \sigma_N$ is understood, we will use the shorthand $\gamma_N$ and $\tilde{\gamma}_N$ for $\gamma^{\bm{\sigma}_N}$ and $\tilde\gamma^{\bm{\sigma}_N}$ respectively.

\subsection{The large deviation principle}
Let $\mathcal{X}$ be a topological space. A rate function on $\mathcal{X}$ is simply a 
lower semicontinuous function $H:\mathcal{X} \to [0,+\infty]$. A sequence $(X_N)_{N \geq 1}$ of random variables taking values in $\mathcal{X}$ is said to satisfy a \textbf{large deviation principle with speed $N$ and rate function $H$} if for all Borel subsets $E$ of $\mathcal{X}$ we have 
\begin{align*}
 - \inf_{x \in E^{\mathrm{o}} } H(x)  \leq  \lim \inf_{N \to \infty} \frac{1}{N} \log \mathbf{P}( X_N \in E ) \leq \lim \sup_{N \to \infty} \frac{1}{N} \log \mathbf{P}( X_N \in E ) \leq  - \inf_{x \in \overline{E}} H(x).
\end{align*}
The rate function $H$ is said to be \textbf{good} if the sets $\{ x: I(x) \leq \alpha \}$ are compact for all $\alpha \geq 0$. See \cite{DZ} for further details on the theory of large deviations.

We will now see that the sequence of random copulas $(\gamma_N)_{N \geq 1}$ associated with a $d$-tuple of independent uniform permutations satisfies a large deviation principle. Namely, following Kenyon et al.\ \cite{KKRW}, write $\Gamma_d$ for the collection of couplas on $[0,1]^d$. This set may be topologised with the metric  
\begin{align} \label{eq:metric}
\mathsf{d}_\Box ( \gamma, \gamma') := \sup_R| \gamma(R) - \gamma'(R) |,
\end{align}
where $R$ ranges over all cylinder sets $R = [a_1,b_1] \times \ldots \times [a_d,b_d]$. Thus $(\Gamma_d,\mathsf{d}_\Box)$ is a metric space.

The following is a generalisation of Theorem 1 of \cite{KKRW} (see also \cite{trashorras, wu}). 
\begin{thm}[\cite{KKRW}] \label{thm:KKRW}
For each $N$, let $\gamma_N := \gamma^{\bm \sigma_N}$ be the copula associated with a $d$-tuple $\bm \sigma_N = (\sigma_{N,1},\ldots,\sigma_{N,d})$ of independent uniformly chosen permutations of $\{1,\ldots,N\}$. Then the sequence $(\gamma_N)_{N \geq 1}$ of random elements of $\Gamma_d$ satisfies a large deviation principle with speed $N$ and good rate function $H(\gamma)$ given in \eqref{eq:Hdef}.
\end{thm}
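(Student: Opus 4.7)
The plan is to extend the proof of the $d=2$ case from Kenyon et al.~\cite{KKRW} to general $d \geq 2$. As a preliminary reduction, I would exploit the left-invariance of uniform measure on the symmetric group. Since $\gamma^{(\sigma_1,\ldots,\sigma_d)} = \gamma^{(\sigma_1 \pi, \ldots, \sigma_d \pi)}$ for every permutation $\pi$ (relabelling the index $i$ does not change the support of the copula), taking $\pi = \sigma_{N,1}^{-1}$ shows that $\gamma^{\bm{\sigma}_N}$ has the same law as $\gamma^{(\mathrm{id},\, \sigma_{N,2}\sigma_{N,1}^{-1},\,\ldots,\,\sigma_{N,d}\sigma_{N,1}^{-1})}$, and the new permutations remain jointly independent and uniform. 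This lets me assume $\sigma_{N,1} = \mathrm{id}$ and work with the joint copula of $d-1$ independent uniform permutations; for $d=2$ this is exactly the permuton of a single uniform permutation, which is the setting of \cite{KKRW}.

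The core argument is a discretisation-plus-counting step. Fix $n \geq 1$ and partition $[0,1]^d$ into $n^d$ boxes $\{B_\alpha\}$ of side $1/n$; let $\gamma_N^{(n)}$ denote the piecewise constant approximation of $\gamma_N$ on this partition. For a target piecewise constant copula $\gamma$ with prescribed box masses $\{\gamma(B_\alpha)\}$ (all coordinate-axis marginals equal to $1/n$), the probability $\mathbf{P}(\gamma_N^{(n)} \approx \gamma)$ equals, up to negligible combinatorial slack, the number of $d$-tuples of permutations realising those masses divided by $(N!)^d$. This reduces to a $d$-dimensional contingency-table enumeration, and a direct multinomial/Stirling calculation yields
\[
\mathbf{P}(\gamma_N^{(n)} \approx \gamma) = \exp\{-N H_n(\gamma) + o(N)\},
\]
where $H_n(\gamma)$ is the Shannon entropy of $\gamma$ discretised at scale $1/n$.

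To finish, I would pass to the continuum limit. Compactness of $(\Gamma_d,\mathsf{d}_\Box)$ gives exponential tightness of $(\gamma_N)$ automatically, so a Dawson--G\"artner projective-limit argument applied to the family of scale-$n$ LDPs produces a continuum LDP whose rate function is $\sup_n H_n(\gamma)$. The martingale convergence / monotonicity of finite-partition entropies under refinement identifies $\sup_n H_n(\gamma) = \int_{[0,1]^d} \gamma(s)\log\gamma(s)\,\mathrm{d}s$ when $\gamma$ is absolutely continuous and $+\infty$ otherwise, matching $H(\gamma)$ as defined in \eqref{eq:Hdef}; lower semicontinuity and goodness then follow from the same monotonicity together with compactness of $(\Gamma_d,\mathsf{d}_\Box)$.

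I anticipate the main obstacle lies in the Stirling step: for $d = 2$ one counts integer matrices with prescribed row and column sums on the Birkhoff polytope, a classical problem handled directly in \cite{KKRW}, whereas for $d \geq 3$ one requires leading-order asymptotics for $d$-dimensional contingency tables with prescribed $(d-1)$-dimensional marginals. At the precision of an LDP only control of the $\exp(o(N))$ error is needed, which should follow from elementary multinomial bounds combined with the chain rule for entropy, but care is required to make these estimates uniform in the target $\gamma$ and in the partition scale $n$ so that the projective-limit argument closes cleanly.
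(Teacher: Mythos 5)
Your proposal follows essentially the same route the paper sketches in Appendix~\ref{sec:appkkrw}: discretise $[0,1]^d$ into an $m^d$ mesh, count the $d$-tuples of permutations realising prescribed box occupancies, apply Stirling to obtain the discretised entropy at exponential scale, and pass to the continuum via exponential tightness (automatic from compactness of $(\Gamma_d,\mathsf{d}_\Box)$) together with a projective-limit argument refining the mesh, exactly as in \cite{KKRW}. Your left-invariance reduction to $\sigma_{N,1}=\mathrm{id}$ is correct and harmless but inessential, since the paper's Lemma~\ref{lem:comb} counts all $d$ permutations directly (the reduction is reflected there simply as the overall factor $N!$).

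One point is overcautious: you flag as the ``main obstacle'' the need for leading-order asymptotics of $d$-dimensional contingency tables with prescribed marginals. In the present setting this is not an obstacle at all, because the relevant count is \emph{exact} and multinomial. As the paper's Lemma~\ref{lem:comb} records, the number of $d$-tuples $\bm\sigma_N$ with box occupancies $(N_{\bm r})_{\bm r \in [m]^d}$ equals $N!\,((N/m)!)^{md}/\prod_{\bm r} N_{\bm r}!$: one first assigns each index $i\in[N]$ a type $\bm r\in[m]^d$ (contributing $N!/\prod_{\bm r}N_{\bm r}!$), and then, given the induced partitions, $\sigma_j$ restricted to each of its $m$ preimage blocks is a free bijection onto the corresponding interval (contributing $((N/m)!)^{md}$). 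This closed form is what makes the Stirling step in \eqref{eq:xXy} clean and uniform in $\gamma$ and in the mesh size, so the ``elementary multinomial bounds'' you mention are in fact an equality and the projective-limit argument closes without further estimates.
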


The proof of this result in the case $d=2$ is given in the appendix of \cite{KKRW}. In Appendix \ref{sec:appkkrw} of the present article, we sketch the adaptations necessary to generalise the result to $d\geq 3$.

\subsection{Proof of \eqref{eq:SGM} and \eqref{eq:SGM2}}
We begin in this section by proving the first two equations, \eqref{eq:SGM} and \eqref{eq:SGM2}, of Theorem \ref{thm:SN}. We will see in the next section that the third equation, \eqref{eq:SGM3}, follows as a relatively simple consequence of the second. 

Let $\bm \mu = (\mu_1,\ldots,\mu_d)$ be a $d$-tuple of probability measures with compact support, and let
\begin{align} \label{eq:zb}
z > \sup_{ s \in [0,1]^d} T_{\mu_1}(s_1) \odot \ldots \odot T_{\mu_d}(s_d).
\end{align} 
Define $F_{\bm \mu}^{\odot}:[0,1]^d \to \mathbb{R}$ by
\begin{align} \label{eq:Fappe}
F_{\bm \mu}^{\odot}(s) := \log(z - T_{\mu_1}(s_1) \odot \ldots \odot T_{\mu_d}(s_d)).
\end{align}
Since each $\mu_j$ has compact support and \eqref{eq:zb} holds, $F_{\bm \mu}^{\odot}$ is bounded.

Given a probability measure $\gamma$ on $[0,1]^d$ we now define
\begin{align} \label{eq:Vfunct}
\phi^{\odot}_{\bm \mu}(\gamma ) := \langle F_{\bm \mu}^{\odot} , \gamma \rangle := \int_{[0,1]^d} F_{\bm \mu}^{\odot}(s) \gamma(\mathrm{d}s).
\end{align}
It is useful to think of $\phi^{\odot}_{\bm \mu}:\mathcal{M}_1([0,1]^d) \to \mathbb{R}$ as a functional on the space $\mathcal{M}_1([0,1]^d)$ of probability measures on $[0,1]^d$.
Since $F_{\bm \mu}^{\odot}$ is bounded, $\phi_{\bm \mu}^{\odot}$ is bounded. Both $F_{\bm \mu}^{\odot}$ and $\phi_{\bm \mu}^{ \odot}$ depend on $z$, but we suppress this dependence in the notation. 

The key idea is that we can express the determinants that occur in the equations \eqref{eq:SGM} and \eqref{eq:SGM2} of Theorem \ref{thm:SN} in terms of the random measure $\tilde{\gamma}_N$ defined in \eqref{eq:tildegamma}. Indeed, recall from the statement of Theorem \ref{thm:SN} that each $A_{N,j}$ is a diagonal matrix with diagonal entries $\{ T_{\mu_j}(i/N) : i =1,\ldots,N\}$. If $\Sigma_{N,j}$ is a permutation matrix, then $A_{N,j}^s := \Sigma_{N,j}A_{N,j}\Sigma_{N,j}^{-1}$ is again diagonal, with the entries of $A_{N,j}$ shuffled according to the permutation matrix $\Sigma_{N,j}$. In particular, $A_{N,1}^s \odot \ldots \odot A_{N,d}^s$ is a diagonal matrix (a sum of diagonal matrices if $\odot = +$, and a product of diagonal matrices if $\odot = \times$), and the $i^{\text{th}}$ element on the diagonal is given by $ T_{\mu_1}(\sigma_1(i)/N) \odot \ldots \odot T_{\mu_d}(\sigma_d(i)/N)$. It follows that since $\Sigma_{N,1},\ldots,\Sigma_{N,d}$ are uniform permutation matrices we have
\begin{align} \label{eq:deta1}
\det(z - ( A_{N,1}^s \odot \ldots \odot A_{N,d}^s ) ) = \prod_{i=1}^N (z - T_{\mu_1}(\sigma_1(i)/N) \odot \ldots \odot T_{\mu_d}(\sigma_d(i)/N) )= e^{N \langle F_{\bm \mu}^{\odot} , \tilde{\gamma}_N \rangle },
\end{align}
where again we are using the shorthand $\tilde \gamma_N := \tilde \gamma^{\bm \sigma_N}$.

Using \eqref{eq:deta1}, we now package \eqref{eq:SGM} and \eqref{eq:SGM2} in the following statement:

\begin{thm} \label{thm:sym}
Let $\bm \mu := (\mu_1,\ldots,\mu_d)$ be a $d$-tuple of probability measures on the real line with compact support, and let $A_{N,j}$ be as in \eqref{eq:diag}. Let $\tilde \gamma_N := \tilde{\gamma}_{\bm \sigma_N}$ be the measure associated with a $d$-tuple of uniformly chosen permutations. Then whenever $z > \sup_{ s \in [0,1]^d} T_{\mu_1}(s_1) \odot \ldots \odot T_{\mu_d}(s_d)$ we have
\begin{align} \label{eq:sym}
\lim_{N \to \infty} \frac{1}{N} \log \mathbf{E} [ e^{N \langle F_{\bm \mu}^{\odot} , \tilde{\gamma}_N \rangle } ] = \sup_\gamma \left\{\langle F_{\bm \mu}^{\odot} , \gamma \rangle -  H(\gamma) \right\},
\end{align}
where the supremum is taken over all copulas $\gamma$ on $[0,1]^d$.
\end{thm}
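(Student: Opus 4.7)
The plan is to apply Varadhan's lemma to the large deviation principle for the random copula $\gamma_N$ established in Theorem \ref{thm:KKRW}. Two technicalities have to be dispatched first: the exponent on the left of \eqref{eq:sym} is integrated against the atomic measure $\tilde\gamma_N$ rather than the copula $\gamma_N$ to which the LDP applies; and the integrand $F_{\bm \mu}^\odot$ may be discontinuous on $[0,1]^d$, since the quantile functions $T_{\mu_j}$ jump at gaps in the supports of $\mu_j$.

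First I would replace $\tilde\gamma_N$ by $\gamma_N$ in the exponent. The difference $\langle F_{\bm\mu}^\odot, \tilde\gamma_N\rangle - \langle F_{\bm\mu}^\odot, \gamma_N\rangle$ is an average of $N$ terms, each bounded by the oscillation of $F_{\bm\mu}^\odot$ on a box $B_{N,i}$ of side length $1/N$ determined by $\bm\sigma_N$. The discontinuities of $F_{\bm\mu}^\odot$ are confined to countably many axis-aligned hyperplanes coming from the jumps of $T_{\mu_1},\ldots,T_{\mu_d}$, and for any $\eta > 0$ each $T_{\mu_j}$ has only finitely many jumps of size exceeding $\eta$. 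A box can intersect such a large-jump hyperplane only when $\sigma_j(i)/N$ lies within $1/N$ of the jump point, so $O_\eta(1)$ boxes account for all large-oscillation summands, while the remaining boxes have oscillation $O(\eta)$ (using that $\log(z-\cdot)$ is Lipschitz on the relevant range, guaranteed by the hypothesis on $z$). Averaging and sending $\eta\to 0$ shows the difference is deterministically $o(1)$, so the left side of \eqref{eq:sym} coincides with $\lim_{N\to\infty}\frac{1}{N}\log\mathbf{E}[e^{N\langle F_{\bm\mu}^\odot,\gamma_N\rangle}]$.

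Second I would apply Varadhan's lemma. Were $F_{\bm\mu}^\odot$ continuous, $\gamma\mapsto\langle F_{\bm\mu}^\odot,\gamma\rangle$ would be continuous on $(\Gamma_d,\mathsf{d}_\Box)$ and the theorem would drop out of the standard Varadhan statement applied to the LDP of Theorem \ref{thm:KKRW}. In general I would sandwich $F_{\bm\mu}^\odot$ between continuous bounded functions $F_-^\epsilon\leq F_{\bm\mu}^\odot\leq F_+^\epsilon$ (obtained by smoothing the quantile functions $T_{\mu_j}$ and invoking monotonicity of $\log(z-\cdot)$), and apply Varadhan to each to obtain
\begin{align*}
\sup_\gamma\bigl\{\langle F_-^\epsilon,\gamma\rangle-H(\gamma)\bigr\}\leq \liminf_{N\to\infty}\tfrac{1}{N}\log\mathbf{E}[e^{N\langle F_{\bm\mu}^\odot,\gamma_N\rangle}]\leq \limsup_{N\to\infty}\tfrac{1}{N}\log\mathbf{E}[e^{N\langle F_{\bm\mu}^\odot,\gamma_N\rangle}]\leq \sup_\gamma\bigl\{\langle F_+^\epsilon,\gamma\rangle-H(\gamma)\bigr\}.
\end{align*}
Letting $\epsilon\to 0$, the two extremal variational problems converge to $\sup_\gamma\{\langle F_{\bm\mu}^\odot,\gamma\rangle-H(\gamma)\}$: since $H(\gamma)<\infty$ forces $\gamma$ to have an $L^1$ density, the values of $F_\pm^\epsilon$ on the Lebesgue-null discontinuity set of $F_{\bm\mu}^\odot$ are immaterial, and boundedness of $F_{\bm\mu}^\odot$ supplies the dominated-convergence input.

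The main obstacle is the last step, verifying that as $\epsilon\to 0$ the regularised variational problem converges to the original one. Goodness of the rate function $H$ from Theorem \ref{thm:KKRW} is crucial here, as it localises the supremum to a tight family of copulas and permits interchanging $\sup$ and $\lim$. Uniform boundedness of $F_{\bm\mu}^\odot$, secured by the hypothesis $z>\sup_s T_{\mu_1}(s_1)\odot\ldots\odot T_{\mu_d}(s_d)$, completes the argument.
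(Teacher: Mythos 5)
Your plan follows the paper's route closely: reduce the atomic $\tilde\gamma_N$ to the copula $\gamma_N$, apply Varadhan's lemma via the LDP of Theorem \ref{thm:KKRW} when $F_{\bm\mu}^\odot$ is continuous (connected supports), then approximate a general $\bm\mu$ by a smoothed $\bm\mu^\varepsilon$ with connected supports. The first step ($\tilde\gamma_N$ versus $\gamma_N$) is handled differently than the paper's Lemma \ref{lem:cra} — you count large jumps rather than exploiting Lipschitz continuity of $\log(z-\cdot)$ plus the telescoping $\sum_i(T_\mu(i/N)-T_\mu((i-1)/N))=T_\mu(1)-T_\mu(0)$ — but both yield the needed $o(1)$ bound, and the spirit is the same.

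The gap is in the step you yourself flag as the main obstacle, and the resolution you offer does not work. You claim goodness of $H$ plus dominated convergence lets you pass $\epsilon\to 0$ inside the supremum. But dominated convergence controls $\langle F_\pm^\epsilon-F_{\bm\mu}^\odot,\gamma\rangle$ only for a \emph{fixed} $\gamma$ with a fixed density; to move the limit through $\sup_\gamma$ you need a bound uniform over the relevant family of copulas. Goodness of $H$ gives you precompactness of the near-optimizers $\gamma_\epsilon$ in $(\Gamma_d,\mathsf d_\Box)$, but $\mathsf d_\Box$-convergence $\gamma_\epsilon\to\gamma_*$ does not imply $\langle F_{\bm\mu}^\odot,\gamma_\epsilon\rangle\to\langle F_{\bm\mu}^\odot,\gamma_*\rangle$ when $F_{\bm\mu}^\odot$ is discontinuous, so the compactness argument stalls. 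The correct mechanism, which you do not invoke, is the \emph{marginal constraint} on copulas. Writing $r_{\mu_j}^\varepsilon := T_{\mu_j}^\varepsilon - T_{\mu_j}\ge 0$, one shows $\int_0^1 r_{\mu_j}^\varepsilon(s)\,\mathrm{d}s\le C_{\mu_j}\varepsilon$; then because every copula $\gamma$ has Lebesgue marginals, $\int_{[0,1]^d} r_{\mu_j}^\varepsilon(s_j)\,\gamma(\mathrm{d}s)=\int_0^1 r_{\mu_j}^\varepsilon(s)\,\mathrm{d}s$ is the \emph{same} for every $\gamma$. Combined with Lipschitz continuity of $\log(z-\cdot)$ and a one-step telescoping of the $\odot$-product, this gives $\bigl|\langle F_{\bm\mu^\varepsilon}^\odot,\gamma\rangle-\langle F_{\bm\mu}^\odot,\gamma\rangle\bigr|\le C_{z,\bm\mu}\,\varepsilon$ \emph{uniformly over all copulas} $\gamma$ (the paper's Lemma \ref{lem:crab}, equation \eqref{eq:diamond2}). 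It is this uniform bound — not goodness or dominated convergence — that makes the two variational problems differ by $O(\varepsilon)$ and closes the argument. You should replace your final paragraph with this marginal-constraint estimate.
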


Our main tool in proving Theorem \ref{thm:sym} is Varadhan's lemma, for which we give a statement here imposing strong but simple conditions. See \cite[Theorem 4.3.1]{DZ} for a statement with milder conditions.

\begin{thm} [Varadhan's Lemma, Theorem 4.3.1 of \cite{DZ}] \label{thm:varadhan}
Let $(X_N)_{N \geq 1}$ be a sequence of random variables taking values in a topological space $\mathcal{X}$ that satisfy a large deviation principle with speed $N$ and good rate function $H:\mathcal{X} \to [0,\infty]$. Let $\phi:\mathcal{X}\to \mathbb{R}$ be a bounded and continuous function. 
Then 
\begin{align} \label{eq:supremum}
\lim_{ N \to \infty }\frac{1}{N} \log \mathbf{E}[ e^{ N \phi(X_N)} ] = \sup_{x \in \mathcal{X}} \left( \phi(x) - H(x) \right).
\end{align}
\end{thm}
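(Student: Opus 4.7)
The plan is to prove Varadhan's lemma by establishing matching upper and lower bounds on $\frac{1}{N}\log \mathbf{E}[e^{N\phi(X_N)}]$ as $N \to \infty$. The lower bound is routine and uses only the LDP lower bound on open sets, while the upper bound is the hard part, where the hypotheses of boundedness of $\phi$ and goodness of $H$ interact crucially.

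For the lower bound, fix any $x_0 \in \mathcal{X}$ with $H(x_0) < \infty$ and any $\epsilon > 0$. By continuity of $\phi$ there is an open neighborhood $V \ni x_0$ on which $\phi \geq \phi(x_0) - \epsilon$. Then $\mathbf{E}[e^{N\phi(X_N)}] \geq e^{N(\phi(x_0)-\epsilon)}\mathbf{P}(X_N \in V)$, and the LDP lower bound applied to the open set $V$ yields
\[
\liminf_N \tfrac{1}{N} \log \mathbf{E}[e^{N\phi(X_N)}] \geq \phi(x_0) - \epsilon - \inf_V H \geq \phi(x_0) - \epsilon - H(x_0).
\]
Letting $\epsilon \to 0$ and taking the supremum over $x_0$ produces the bound $\sup_x(\phi(x) - H(x))$.

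For the upper bound, set $M := \|\phi\|_\infty$ and fix $\epsilon > 0$. Choose $\alpha$ large enough that $M - \alpha \leq \sup_x(\phi(x) - H(x)) - 2\epsilon$ (since goodness forces $\inf H = 0$, the supremum is at least $-M$ and hence finite). By goodness $K_\alpha := \{H \leq \alpha\}$ is compact. For each $x \in K_\alpha$, build an open neighborhood $U_x$ whose closure $\overline{U_x}$ satisfies $|\phi(y) - \phi(x)| < \epsilon$ for $y \in \overline{U_x}$, and by compactness extract a finite subcover $U_{x_1},\ldots,U_{x_n}$ of $K_\alpha$. Split
\[
\mathbf{E}[e^{N\phi(X_N)}] \leq \sum_{i=1}^n e^{N(\phi(x_i)+\epsilon)} \mathbf{P}(X_N \in \overline{U_{x_i}}) + e^{NM}\,\mathbf{P}\!\left(X_N \in \bigl(\bigcup_i \overline{U_{x_i}}\bigr)^c\right),
\]
and apply the LDP upper bound to each of the (closed) sets appearing on the right. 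The final term contributes at most $M - \alpha$, since the complement is disjoint from $K_\alpha$ and so carries $H \geq \alpha$. For the $i$-th inner term, pick a minimizer $y_i$ of $H$ on $\overline{U_{x_i}}$ (lower semi-continuity together with goodness ensure existence); combining $\phi(x_i) \leq \phi(y_i) + \epsilon$ with $H(y_i) = \inf_{\overline{U_{x_i}}} H$ gives $\phi(x_i) + \epsilon - \inf_{\overline{U_{x_i}}} H \leq \sup_y(\phi(y) - H(y)) + 2\epsilon$. Sending $\epsilon \to 0$ completes the upper bound.

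The main obstacle is the upper bound, and specifically the need to avoid applying the LDP directly to $K_\alpha^c$, which is open rather than closed; its closure can easily have $H$-infimum strictly below $\alpha$ by lower semi-continuity. The resolution is to replace $K_\alpha^c$ with the complement of a finite union of closed neighborhoods of points of $K_\alpha$, a closed set on which $H \geq \alpha$ holds by construction. The boundedness of $\phi$ is what lets this leftover term $M - \alpha$ be made negligible by sending $\alpha$ large. Producing closed neighborhoods of each $x \in K_\alpha$ on which $\phi$ is $\epsilon$-close to $\phi(x)$ is automatic in any metric space and so is unproblematic in the applications of interest here.
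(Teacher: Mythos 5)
The paper offers no proof of this statement---it is quoted directly from Dembo and Zeitouni---so the comparison is with the standard textbook argument, which is exactly what you reproduce: the LDP lower bound applied to small open neighbourhoods for the lower bound, and for the upper bound a cover of the compact level set $K_\alpha=\{H\le\alpha\}$ by finitely many closed neighbourhoods on which $\phi$ oscillates by less than $\epsilon$, with the remainder killed by the boundedness of $\phi$ and by sending $\alpha\to\infty$. Your argument is essentially correct, with one slip to repair: the leftover set $\bigl(\bigcup_i \overline{U_{x_i}}\bigr)^{c}$ is open, not closed, so you cannot literally apply the LDP upper bound ``to each of the (closed) sets on the right''; instead bound its probability by that of the closed superset $\bigl(\bigcup_i U_{x_i}\bigr)^{c}$, which is disjoint from $K_\alpha$ because the open sets $U_{x_i}$ already cover $K_\alpha$, whence $H\ge\alpha$ on it and the contribution $M-\alpha$ stands. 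This is precisely the mechanism you describe in your closing paragraph (the closure of the leftover region stays away from $K_\alpha$), just attached to the wrong set. Two further remarks: producing closed neighbourhoods on which $\phi$ varies by less than $\epsilon$ requires the space to be regular (e.g.\ metric)---you flag this, and it is harmless since the paper only invokes the lemma on the metric space $(\Gamma_d,\mathsf{d}_\Box)$ and the cited Theorem 4.3.1 of Dembo--Zeitouni assumes regularity anyway---and your final step tacitly uses that $\limsup_{N}\frac{1}{N}\log$ of a sum of a fixed finite number (here $n+1$, independent of $N$) of nonnegative terms equals the maximum of the individual $\limsup$s, which is standard but worth stating.
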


In light of \eqref{eq:deta1}, we would ultimately like to apply Varadhan's lemma to the functional $\phi^{\odot}_{\bm \mu}( \gamma ) := \langle F_{\bm \mu}^{\odot} , \gamma \rangle$ with the sequence $(\tilde{\gamma}_N)_{N \geq 1}$. However, there are several technical issues to consider. First of all, the left-hand side of \eqref{eq:sym} is expressed in terms of $\langle F_{\bm \mu}^{\odot} , \tilde{\gamma}_N \rangle$, rather than $\langle F_{\bm \mu}^{\odot} , \gamma_N \rangle$, and it is $(\gamma_N)_{N \geq 1}$ rather than $(\tilde{\gamma}_N)_{N \geq 1}$ that satisfies the large deviation principle, Theorem \ref{thm:KKRW}.

Furthermore, in general the functional $\gamma \mapsto \langle F_{\bm \mu}^{\odot} , \gamma \rangle$ is not continuous for $\gamma \in \Gamma_d$ with the topology generated by the metric \eqref{eq:metric}, since the quantile functions $T_{\mu_j}:[0,1] \to \mathbb{R}$ (and hence $F_{\bm \mu}^{\odot}:[0,1]^d \to \mathbb{R}$) need not be continuous. 

With a view to addressing the former issue described above, we have the following lemma, which states that $\langle F_{\bm \mu}^{\odot} , \tilde{\gamma}_N  \rangle $ and $\langle F_{\bm \mu}^{\odot} , \gamma_N \rangle $ are asymptotically similar:

\begin{lemma} \label{lem:cra}
Let $\bm \mu = (\mu_1,\ldots,\mu_d)$ be a $d$-tuple of probability measures with compact support, and let $z > \sup_{ s \in [0,1]^d} T_{\mu_1}(s_1) \odot \ldots \odot T_{\mu_d}(s_d)$. Then there is a constant $C_{z, \bm \mu}$ such that for any $N \geq 1$ and for any $\gamma_N$ and $\tilde{\gamma}_N$ associated with a $d$-tuple $\bm \sigma_N := (\sigma_{N,1},\ldots,\sigma_{N,d})$ of permutations of $\{1,\ldots,N\}$ we have
\begin{align} \label{eq:cra2}
|\langle F_{\bm \mu}^{\odot} , \tilde{\gamma}_N \rangle- \langle F_{\bm \mu}^{\odot} , \gamma_N \rangle| \leq C_{z,\bm \mu}/N.
\end{align}
\end{lemma}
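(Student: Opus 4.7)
The measures $\gamma_N$ and $\tilde{\gamma}_N$ are, by construction, very close: $\gamma_N$ spreads uniform mass $1/N$ over each small cube $Q_i := \prod_{j=1}^d ((\sigma_{N,j}(i)-1)/N,\sigma_{N,j}(i)/N]$, while $\tilde{\gamma}_N$ concentrates the same mass $1/N$ at the upper corner $p_i := (\sigma_{N,1}(i)/N,\ldots,\sigma_{N,d}(i)/N)$ of that cube. The approach is thus to write
\begin{equation*}
\langle F_{\bm \mu}^{\odot}, \gamma_N\rangle - \langle F_{\bm \mu}^{\odot}, \tilde{\gamma}_N\rangle = \sum_{i=1}^N N^{d-1} \int_{Q_i} \bigl( F_{\bm \mu}^{\odot}(s) - F_{\bm \mu}^{\odot}(p_i)\bigr) \mathrm{d}s,
\end{equation*}
and bound the integrand pointwise using the oscillation of $G(s) := T_{\mu_1}(s_1)\odot \cdots \odot T_{\mu_d}(s_d)$ across $Q_i$, exploiting two ingredients: the Lipschitz behaviour of $u\mapsto \log(z-u)$, and the monotonicity of the $T_{\mu_j}$'s.

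The first ingredient uses the hypothesis: setting $\delta := z - \sup_{s\in[0,1]^d}G(s)>0$, the map $u\mapsto\log(z-u)$ is Lipschitz with constant $1/\delta$ on the compact range of $G$, so that $|F_{\bm \mu}^{\odot}(s)-F_{\bm \mu}^{\odot}(p_i)| \leq \delta^{-1}|G(s)-G(p_i)|$. For the second ingredient, I would control the oscillation of $G$ over $Q_i$ in terms of the one-dimensional oscillations of the individual quantile functions. In the additive case this is immediate from monotonicity of each $T_{\mu_j}$: for $s\in Q_i$,
\begin{equation*}
|G(s)-G(p_i)| \leq \sum_{j=1}^d \bigl( T_{\mu_j}(\sigma_{N,j}(i)/N)-T_{\mu_j}((\sigma_{N,j}(i)-1)/N) \bigr).
\end{equation*}
In the multiplicative case, I would apply the standard telescoping identity $\prod u_j - \prod v_j = \sum_j (\prod_{k<j}u_k)(u_j-v_j)(\prod_{k>j}v_k)$ together with the uniform bound $R := \max_j \sup_{s\in[0,1]}|T_{\mu_j}(s)|<\infty$ (finite by compact support of each $\mu_j$) to obtain the same estimate with an additional factor $R^{d-1}$ depending only on $\bm{\mu}$.

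The final step is a telescoping sum. Combining the previous two bounds, carrying out the integration over $Q_i$ (which contributes a factor $|Q_i|=N^{-d}$ that cancels $N^{d-1}$ to leave a factor $1/N$), and summing over $i$, one obtains
\begin{equation*}
|\langle F_{\bm \mu}^{\odot}, \gamma_N\rangle - \langle F_{\bm \mu}^{\odot}, \tilde{\gamma}_N\rangle| \leq \frac{C}{N} \sum_{j=1}^d \sum_{i=1}^N \bigl( T_{\mu_j}(\sigma_{N,j}(i)/N)-T_{\mu_j}((\sigma_{N,j}(i)-1)/N)\bigr),
\end{equation*}
with $C=C(\delta,R,d)$. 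Because $\sigma_{N,j}$ is a permutation of $\{1,\ldots,N\}$, the inner sum telescopes to $T_{\mu_j}(1)-T_{\mu_j}(0^+) \leq E_{\mu_j}^+ - E_{\mu_j}^-$, which is finite by the compact-support assumption. Absorbing these $\mu_j$-dependent constants into $C_{z,\bm{\mu}}$ yields the stated bound $C_{z,\bm{\mu}}/N$. The only mildly delicate point is the multiplicative case, where the product $G(s)$ is not itself coordinatewise monotone when $T_{\mu_1}$ changes sign; but the telescoping product identity above reduces everything uniformly to one-dimensional oscillations, so no real difficulty arises.
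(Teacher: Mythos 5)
Your argument is correct and follows essentially the same route as the paper's proof: both bound the difference by per-cube oscillations of $F_{\bm\mu}^{\odot}$, control each oscillation via the Lipschitz bound on $\log(z-\cdot)$ combined with a coordinatewise telescoping estimate in terms of increments of the quantile functions, and finally use the bijectivity of each $\sigma_{N,j}$ to telescope the total to $T_{\mu_j}(1)-T_{\mu_j}(0)$. The only cosmetic difference is the order in which the Lipschitz bound and the coordinatewise decomposition are applied; the paper defines one-dimensional oscillations $a^{(j)}_i$ of $F$ directly and then invokes Lipschitz, whereas you apply Lipschitz first and then decompose the oscillation of $G$, which leads to the same estimate.
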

\begin{proof}
A brief calculation using the definitions \eqref{eq:gamma} and \eqref{eq:tildegamma} of $\gamma_N$ and $\tilde{\gamma}_N$ tells us that
\begin{align} \label{eq:warwick1}
|\langle F_{\bm \mu}^{\odot} , \tilde{\gamma}_N \rangle- \langle F_{\bm \mu}^{\odot} , \gamma_N \rangle|  &:= \left| \int_{[0,1]^d} F_{\bm \mu}^{\odot}(s) \tilde{\gamma}_N(\mathrm{d}s) - \int_{[0,1]^d} F_{\bm \mu}^{\odot}(s) \gamma_N(\mathrm{d}s ) \right| \nonumber \\
&\leq \frac{1}{N} \sum_{ i = 1}^N G_N(\sigma_{N,1}(i),\ldots,\sigma_{N,d}(i) ),
\end{align}
where for $1 \leq k_1,\ldots,k_d \leq N$ we have 
\begin{align*}
G_N(k_1,\ldots,k_d) := \sup \left\{ |F_{\bm \mu}^{\odot}(s) - F_{\bm \mu}^{\odot}(t)| :  s,t \in \left( \frac{k_i-1}{N}, \frac{k_i}{N} \right] \times \ldots \times \left( \frac{k_d-1}{N}, \frac{k_d}{N} \right] \right\}.
\end{align*}
We would like to find an upper bound for $G_N(k_1,\ldots,k_d)$ defined in terms of coordinatewise quantities. In this this direction, for $1 \leq j \leq d, 1 \leq i \leq N$ define
\begin{align*}
a^{(j)}_i := \sup \left\{  | F_{\bm \mu}^{\odot}(s') - F_{\bm \mu}^{\odot}(s) | : s,s' \in [0,1]^d,  \frac{i-1}{N} < s_j, s_j' \leq \frac{i}{N}, s_k'=s_k \text{ for $k \neq j$}\right\}.
\end{align*}
By the triangle inequality,
\begin{align} \label{eq:warwick2}
G_N(k_1,\ldots,k_d) \leq \sum_{j=1}^d a_{k_j}^{(j)}.
\end{align}

Note that $\log(z-x \odot y)$ is uniformly Lipschitz in the $x$ variable for all $x$ and $y$ varying in a bounded interval such that $z-x \odot y$ is positive and bounded away from zero. Using this observation with $x = T_{\mu_j}(s)$ and $y = T_{\mu_1}(s_1) \odot \ldots \odot T_{\mu_{j-1}}(s_{j-1}) \odot T_{\mu_{j+1}}(s_{j+1}) \odot T_{\mu_d}(s_d)$, we have
\begin{align} \label{eq:jasmine}
a_i^{(j)} \leq C^{(j)}_{z, \bm \mu} (T_{\mu_j}(i/N) - T_{\mu_j}((i-1)/N)),
\end{align}
for some constant $C^{(j)}_{z,\bm \mu}$ not depending on $i$ or $N$. Summing \eqref{eq:jasmine} over $1 \leq i \leq N$, we obtain 
\begin{align} \label{eq:warwick3}
\sum_{i=1}^N a^{(j)}_i \leq C^{(j)}_{z, \bm \mu},
\end{align}
for a possibly different constant $C^{(j)}_{z, \bm \mu}$ not depending on $N$.

Using \eqref{eq:warwick1} in conjuction with \eqref{eq:warwick2} to obtain the first inequality below, using the fact that $\sigma_{N,j}$ is a bijection on $\{1,\ldots,N\}$ and then interchanging the order of summation to obtain the following equality, and then using \eqref{eq:warwick3} to obtain the final inequality, we have
\begin{align*}
|\langle F_{\bm \mu}^{\odot} , \tilde{\gamma}_N \rangle- \langle F_{\bm \mu}^{\odot} , \gamma_N \rangle|  &\leq \frac{1}{N} \sum_{i=1}^N \sum_{j=1}^d a^{(j)}_{\sigma_{N,j}(i)} = \frac{1}{N} \sum_{j=1}^d \sum_{i=1}^N a_i^{(j)} \leq  C_{z, \bm \mu}/N,
\end{align*}
as required.  
\end{proof}

Consider now that for a probability measure $\mu$ on $\mathbb{R}$,
\begin{align*}
\mu \text{ has connected support} \implies T_\mu:[0,1] \to \mathbb{R} \text{ is continuous.}
\end{align*} 
Thus when each of $\mu_1,\ldots,\mu_d$ has connected support, $\log(z - T_{\mu_1}(s_1) \odot \ldots \odot T_{\mu_d}(s_d) )$ is a continuous function of $s = (s_1,\ldots,s_d) \in [0,1]^d$, and accordingly, $\phi^{\odot}_{\bm \mu}:\Gamma_d \to \mathbb{R}$ defined in \eqref{eq:Vfunct} is a continuous functional on the set $\Gamma_d$ of copulas on $[0,1]^d$. 

With Lemma \ref{lem:cra} at hand, we are already equipped to prove Theorem \ref{thm:sym} in the connected support special case:

\begin{proposition} \label{prop:special}
Theorem \ref{thm:sym} is true whenever we additionally assume $\mu_1,\ldots,\mu_d$ have connected support.
\end{proposition}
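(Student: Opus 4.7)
The strategy is to reduce the statement to a direct application of Varadhan's lemma. The obstruction is that Theorem \ref{thm:KKRW} provides a large deviation principle for the copula $\gamma_N$, whereas \eqref{eq:sym} is formulated in terms of the singular measure $\tilde\gamma_N$; and the functional $\phi_{\bm\mu}^{\odot}(\gamma) := \langle F_{\bm\mu}^{\odot}, \gamma\rangle$ is not continuous on $(\Gamma_d, \mathsf{d}_\Box)$ for arbitrary $\bm\mu$, which is precisely why the connected support hypothesis is imposed.

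First I would use Lemma \ref{lem:cra} to replace $\tilde\gamma_N$ with $\gamma_N$ in the exponent. The pointwise bound $|\langle F_{\bm\mu}^{\odot}, \tilde\gamma_N\rangle - \langle F_{\bm\mu}^{\odot}, \gamma_N\rangle| \leq C_{z,\bm\mu}/N$ implies
\begin{align*}
e^{-C_{z,\bm\mu}}\, \mathbf{E}\bigl[e^{N\langle F_{\bm\mu}^{\odot}, \gamma_N\rangle}\bigr] \;\leq\; \mathbf{E}\bigl[e^{N\langle F_{\bm\mu}^{\odot}, \tilde\gamma_N\rangle}\bigr] \;\leq\; e^{C_{z,\bm\mu}}\, \mathbf{E}\bigl[e^{N\langle F_{\bm\mu}^{\odot}, \gamma_N\rangle}\bigr],
\end{align*}
and taking $\tfrac{1}{N}\log$ then $N\to\infty$ shows that the two sides of \eqref{eq:sym} agree if and only if the corresponding statement holds with $\gamma_N$ in place of $\tilde\gamma_N$.

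Next I would verify the hypotheses of Varadhan's lemma (Theorem \ref{thm:varadhan}) for the sequence $(\gamma_N)_{N\geq 1}$ on the space $(\Gamma_d, \mathsf{d}_\Box)$ and the functional $\phi_{\bm\mu}^{\odot}$. The large deviation principle at speed $N$ with good rate function $H$ is provided by Theorem \ref{thm:KKRW}. Boundedness of $\phi_{\bm\mu}^{\odot}$ follows because $F_{\bm\mu}^{\odot}$ is bounded on $[0,1]^d$: each $T_{\mu_j}$ is bounded by compactness of $\mathrm{supp}(\mu_j)$, and the hypothesis $z > \sup_{s \in [0,1]^d} T_{\mu_1}(s_1) \odot \ldots \odot T_{\mu_d}(s_d)$ keeps the argument of the logarithm bounded away from $0$. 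For continuity, the connected-support assumption guarantees that each quantile function $T_{\mu_j}:[0,1]\to\mathbb{R}$ is continuous, so $F_{\bm\mu}^{\odot}$ is continuous on the compact set $[0,1]^d$. Since convergence in $\mathsf{d}_\Box$ implies convergence on all axis-parallel rectangles, which form a convergence-determining class for weak convergence of probability measures on the Polish space $[0,1]^d$, it follows that $\mathsf{d}_\Box$-convergence of copulas implies weak convergence, and hence $\gamma \mapsto \int F_{\bm\mu}^{\odot}\, \mathrm{d}\gamma$ is continuous on $(\Gamma_d, \mathsf{d}_\Box)$.

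With these hypotheses in hand, Varadhan's lemma gives
\begin{align*}
\lim_{N\to\infty} \frac{1}{N}\log \mathbf{E}\bigl[e^{N\langle F_{\bm\mu}^{\odot}, \gamma_N\rangle}\bigr] = \sup_{\gamma \in \Gamma_d} \bigl\{ \langle F_{\bm\mu}^{\odot}, \gamma\rangle - H(\gamma)\bigr\},
\end{align*}
and combining with the reduction from $\tilde\gamma_N$ to $\gamma_N$ yields \eqref{eq:sym}. The main conceptual obstacle is the continuity verification in the third step; the jumps of $T_{\mu_j}$ in the general case preclude a direct application of Varadhan's lemma, and presumably the non-connected case in the sequel will require truncation or an approximation-by-connected-support argument (perturbing $\mu_j$ slightly so that gaps in support are filled, and controlling the error in both the functional and the rate function).
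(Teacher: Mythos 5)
Your proof is correct and follows essentially the same route as the paper: you reduce from $\tilde\gamma_N$ to $\gamma_N$ via Lemma \ref{lem:cra}, then apply Varadhan's lemma using the LDP of Theorem \ref{thm:KKRW} and the continuity of $\phi_{\bm\mu}^{\odot}$ afforded by the connected-support assumption. The only difference is that you supply the (correct) justification that $\mathsf{d}_\Box$-convergence dominates weak convergence, a point the paper asserts without elaboration, and your closing remark anticipating the approximation-by-smoothed-quantile-functions argument for the general case matches what the paper does via $T_\mu^\varepsilon$.
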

\begin{proof}
Recall that $\phi_{\bm \mu}^{\odot}(\gamma) := \langle F_{\bm \mu}^{\odot} , \gamma \rangle$. Consider first the expectation $\mathbf{E}[ e^{N \phi_{\bm \mu}^{\odot}(\gamma_N) }]$ (rather than $\mathbf{E}[e^{N \phi_{\bm \mu}^{\odot}(\tilde \gamma_N) }]$, which occurs in the statement of Theorem \ref{thm:sym}). By the assumption that $\mu_1,\ldots,\mu_d$ have connected compact support, $\phi_{\bm \mu}^{\odot}$ is a continuous and bounded functional on $\mathcal{M}_1([0,1]^d)$, and thus by combining the large deviation principle for $(\gamma_N)_{N \geq 1}$, Theorem \ref{thm:KKRW}, and Varadhan's lemma, Theorem \ref{thm:varadhan}, we have 
\begin{align} \label{eq:pastel}
\lim_{N \to \infty} \frac{1}{N} \log \mathbf{E} [ e^{N \phi_{\bm \mu}^{\odot}( \gamma_N) } ] = \sup_\gamma \left\{ \phi^{\odot}_{\bm \mu}(\gamma )- H(\gamma) \right\}.
\end{align}
However, by \eqref{eq:cra2}, $\lim_{N \to \infty} \frac{1}{N} \log \mathbf{E}[e^{N \phi_{\bm \mu}^{\odot}(\tilde \gamma_N) }] = \lim_{N \to \infty} \frac{1}{N} \log \mathbf{E}[e^{N \phi_{\bm \mu}^{\odot}( \gamma_N) }] $, proving the proposition.
\end{proof}

We use Proposition \ref{prop:special} as a foothold to prove the general case where the supports of $\mu_j$ need not be connected. Given a measure $\tilde{\gamma}_N$ associated with a $d$-tuple of permutations, we now examine continuity properties of $\langle F_{\bm \mu}^{\odot} , \tilde{\gamma}_N \rangle $ in the $\bm \mu$ variable as we try to approximate an arbitrary $d$-tuple of measures $\bm \mu = (\mu_1,\ldots,\mu_d)$ by similar measures with connected support. With this in mind, given a probability measure $\mu$ on $\mathbb{R}$, for $\varepsilon > 0$ define $T_\mu^\varepsilon:[0,1] \to \mathbb{R}$ by setting
\begin{align} \label{eq:squash}
T_\mu^\varepsilon(t) := \frac{1}{\varepsilon} \int_0^\varepsilon T_\mu(t+u)\mathrm{d}u,
\end{align}
where we use the convention that $T_\mu(t) = T_\mu(1)$ for all $ t \geq 1$. Note that since $T_\mu(t)$ is nondecreasing in $t$, $T_\mu^\varepsilon(t) \geq T_\mu(t)$ for all $t \in [0,1]$. In fact, if $\varepsilon_1 \geq \varepsilon_2$, then $T_\mu^{\varepsilon_1}(t) \geq T_\mu^{\varepsilon_2}(t)$. 

Write $\mu^{\varepsilon}$ for the probability measure associated with $T_\mu^\varepsilon$, which may be defined by $\mu^{\varepsilon}(-\infty,x] = \inf \{ s \in [0,1] : T_{\mu}^\varepsilon(s) \geq x \}$. Then $\mu^\varepsilon$ has connected support. If $\bm \mu = (\mu_1,\ldots,\mu_d)$, write $\bm \mu^\varepsilon := (\mu_1^\varepsilon,\ldots, \mu_d^\varepsilon)$. 

\begin{lemma} \label{lem:crab}
Let $\varepsilon > 0$.
Suppose each $\mu_j$ has compact support, and $z > \sup_{ s \in [0,1]^d} T_{\mu_1}(s_1) \odot \ldots \odot T_{\mu_d}(s_d)$. Then there is a constant $C_{z,\bm \mu}$ such that for every $N \geq 1$ and for every measure $\tilde{\gamma}_N$ associated with a $d$-tuple of permutations as in \eqref{eq:tildegamma} we have 
\begin{align} \label{eq:diamond1}
\left| \langle F_{\bm \mu^\varepsilon}^{\odot} , \tilde{\gamma}_N \rangle  - \langle F_{\bm \mu}^{\odot} , \tilde{\gamma}_N \rangle \right|  \leq C_{z,\bm \mu} (\varepsilon +1/N),
\end{align}
and for every copula $\gamma$ on $[0,1]^d$ we have
\begin{align} \label{eq:diamond2}
\left| \langle F_{\bm \mu^\varepsilon}^{\odot} ,\gamma \rangle  - \langle F_{\bm \mu}^{\odot} , \gamma \rangle \right| \leq C_{z,\bm \mu} \varepsilon.
\end{align}
\end{lemma}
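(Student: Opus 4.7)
The plan is to decompose both estimates through a single intermediate inequality: a pointwise control of $|F_{\bm \mu^\varepsilon}^{\odot}(s) - F_{\bm \mu}^{\odot}(s)|$ by a sum of coordinatewise differences $|T_{\mu_j^\varepsilon}(s_j) - T_{\mu_j}(s_j)|$, followed by an $L^1$ bound of each $T_{\mu_j^\varepsilon}-T_{\mu_j}$ on $[0,1]$ of order $\varepsilon$. The two displayed inequalities \eqref{eq:diamond1} and \eqref{eq:diamond2} will then fall out from integrating against $\tilde\gamma_N$ and $\gamma$ respectively, with the $1/N$ in \eqref{eq:diamond1} coming from a standard Riemann-sum approximation argument that leverages monotonicity.

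The first step is to verify the Lipschitz property. Since each $\mu_j$ has compact support $[E_{\mu_j}^-,E_{\mu_j}^+]$, both $T_{\mu_j}(s)$ and $T_{\mu_j^\varepsilon}(s)$ lie in this interval (the averaging \eqref{eq:squash} with the convention $T_\mu(t) = T_\mu(1)$ preserves the sup). Hence $\sup_{s}T_{\mu_1^\varepsilon}(s_1)\odot\ldots\odot T_{\mu_d^\varepsilon}(s_d) \leq \sup_s T_{\mu_1}(s_1)\odot\ldots\odot T_{\mu_d}(s_d) < z$, so the argument of the logarithm in both $F_{\bm\mu}^\odot$ and $F_{\bm\mu^\varepsilon}^\odot$ is uniformly positive. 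The map $(x_1,\ldots,x_d)\mapsto \log(z-x_1\odot\ldots\odot x_d)$ is thus smooth on the relevant compact box, with partial derivatives bounded by some $C_{z,\bm \mu}$. By the mean value theorem,
\begin{equation*}
\bigl| F_{\bm \mu^\varepsilon}^{\odot}(s) - F_{\bm \mu}^{\odot}(s) \bigr| \leq C_{z,\bm \mu} \sum_{j=1}^d |T_{\mu_j^\varepsilon}(s_j) - T_{\mu_j}(s_j)|.
\end{equation*}

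The second step is to bound $\int_0^1 (T_\mu^\varepsilon(s)-T_\mu(s))\mathrm{d}s$, where the integrand is nonnegative by monotonicity of $T_\mu$. Writing $T_\mu^\varepsilon(s)-T_\mu(s) = \tfrac{1}{\varepsilon}\int_0^\varepsilon(T_\mu(s+u)-T_\mu(s))\mathrm{d}u$ and applying Fubini, the inner integral over $s\in[0,1]$ for fixed $u\in[0,\varepsilon]$ telescopes into $\int_1^{1+u}T_\mu(t)\mathrm{d}t - \int_0^u T_\mu(t)\mathrm{d}t \leq u(E_\mu^+ - E_\mu^-)$. Dividing by $\varepsilon$ and integrating $u$ over $[0,\varepsilon]$ gives $\int_0^1 (T_\mu^\varepsilon-T_\mu) \leq \varepsilon(E_\mu^+-E_\mu^-)$.

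For \eqref{eq:diamond2}, integrate the pointwise Lipschitz bound against the copula $\gamma$ and use that the $j$-th marginal of $\gamma$ is Lebesgue measure on $[0,1]$, so each coordinate contribution reduces to $\int_0^1|T_{\mu_j^\varepsilon}(s_j)-T_{\mu_j}(s_j)|\mathrm{d}s_j \leq C\varepsilon$ by the above. For \eqref{eq:diamond1}, integrating against $\tilde\gamma_N$ produces $\tfrac{1}{N}\sum_{i=1}^N |T_{\mu_j^\varepsilon}(\sigma_j(i)/N)-T_{\mu_j}(\sigma_j(i)/N)|$; since $\sigma_j$ is a bijection of $\{1,\ldots,N\}$, this equals $\tfrac{1}{N}\sum_{k=1}^N (T_{\mu_j^\varepsilon}(k/N)-T_{\mu_j}(k/N))$. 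The summand is a monotone-minus-monotone function with total variation bounded by $2(E_{\mu_j}^+-E_{\mu_j}^-)$, so this Riemann sum differs from $\int_0^1 (T_{\mu_j^\varepsilon}-T_{\mu_j})\mathrm{d}s$ by at most $C/N$, yielding the combined bound $C_{z,\bm \mu}(\varepsilon + 1/N)$. The only mildly delicate point is confirming that $T_\mu^\varepsilon$ is the genuine quantile function of $\mu^\varepsilon$ so that the Lipschitz inequality applies directly to $F_{\bm\mu^\varepsilon}^\odot$; this follows from the defining relation $\mu^\varepsilon((-\infty,x])=\inf\{s:T_\mu^\varepsilon(s)\geq x\}$.
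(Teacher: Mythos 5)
Your proof is correct and follows the same broad structure as the paper's: a Lipschitz bound reducing $|F_{\bm\mu^\varepsilon}^\odot - F_{\bm\mu}^\odot|$ to coordinatewise quantile differences $r_{\mu_j}^\varepsilon := T_{\mu_j}^\varepsilon - T_{\mu_j}$, an $L^1$ bound $\int_0^1 r_\mu^\varepsilon \leq C_\mu \varepsilon$ obtained by the same telescoping (this is the paper's Lemma~\ref{lem:rats}, inequality \eqref{eq:crystal2}), and then integration against the copula and against $\tilde\gamma_N$ using that the marginals are Lebesgue and $\frac{1}{N}\sum_k\delta_{k/N}$ respectively. Where you genuinely diverge is in how the discrete average $\frac{1}{N}\sum_{k=1}^N r_\mu^\varepsilon(k/N)$ is bounded. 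The paper proves \eqref{eq:crystal1} directly by a separate, somewhat involved computation: it picks $p$ with $\frac{p-1}{N}<\varepsilon\leq\frac{p}{N}$, replaces $r_\mu^\varepsilon$ by $r_\mu^{p/N}$ using monotonicity in $\varepsilon$, decomposes the averaging window into $1/N$-blocks, swaps the order of summation, and telescopes. You instead observe that $r_\mu^\varepsilon$ is a difference of nondecreasing functions, hence of total variation at most $2(E_\mu^+-E_\mu^-)$, so the right Riemann sum differs from $\int_0^1 r_\mu^\varepsilon$ by at most $V(r_\mu^\varepsilon)/N$. This replaces the paper's ad hoc calculation with a single general principle (Riemann-sum error for bounded-variation functions), buying a shorter and more modular argument at the cost of invoking a fact you did not prove in full; the paper's route is self-contained but more technical. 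Both yield the same $C_{z,\bm\mu}(\varepsilon+1/N)$, so the only improvement I'd suggest is to record the one-line justification of the BV Riemann-sum bound (split $[0,1]$ into $1/N$-intervals and bound $|g(k/N)-g(s)|$ on each by the local variation) so the step is as airtight as the paper's.
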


Before proving Lemma \ref{lem:crab}, we need the following intermediate result.

\begin{lemma} \label{lem:rats}
Let $\mu$ be a probability measure on $\mathbb{R}$ with compact support. Let $r_\mu^\varepsilon(t) := T_\mu^\varepsilon(t) - T_\mu(t)$. Then there is a constant $C_\mu$ such that 
\begin{align} \label{eq:crystal1}
\frac{1}{N} \sum_{i=1}^N r_\mu^\varepsilon(i/N) \leq C_\mu (\varepsilon +1/N),
\end{align}
and 
\begin{align} \label{eq:crystal2}
\int_0^1 r_\mu^\varepsilon(u) \mathrm{d}u \leq C_\mu \varepsilon.
\end{align}
\end{lemma}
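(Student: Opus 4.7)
The plan is to establish both bounds by exploiting monotonicity of the quantile function together with the compactness of $\mathrm{supp}(\mu)$. The key preliminary observation is that, since $T_\mu$ is nondecreasing, for every $t \in [0,1]$ and every $u \in [0,\varepsilon]$ we have $T_\mu(t+u) - T_\mu(t) \leq T_\mu(t+\varepsilon) - T_\mu(t)$. Averaging over $u \in [0,\varepsilon]$ gives the pointwise bound
\begin{equation*}
0 \leq r_\mu^\varepsilon(t) \leq T_\mu(t+\varepsilon) - T_\mu(t),
\end{equation*}
which reduces both inequalities to controlling the displacement of $T_\mu$ by an increment of size $\varepsilon$. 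Since $T_\mu$ takes values in $[E_\mu^-,E_\mu^+]$, I would take $C_\mu := E_\mu^+ - E_\mu^-$ as the natural candidate constant.

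For the integral bound \eqref{eq:crystal2}, I would apply Fubini directly to the definition \eqref{eq:squash} and write
\begin{equation*}
\int_0^1 r_\mu^\varepsilon(t)\mathrm{d}t = \frac{1}{\varepsilon}\int_0^\varepsilon \int_0^1 \bigl(T_\mu(t+u) - T_\mu(t)\bigr)\mathrm{d}t\,\mathrm{d}u.
\end{equation*}
A change of variables in the inner integral, combined with the convention $T_\mu(s) = T_\mu(1)$ for $s \geq 1$, telescopes the inner integral down to $\int_0^u (T_\mu(1) - T_\mu(s))\mathrm{d}s \leq u(E_\mu^+ - E_\mu^-)$. Integrating $u$ over $[0,\varepsilon]$ then yields $\int_0^1 r_\mu^\varepsilon \leq \tfrac{1}{2}(E_\mu^+ - E_\mu^-)\varepsilon$, which is the desired estimate.

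For the Riemann-sum bound \eqref{eq:crystal1}, the idea is to absorb the mismatch between $\varepsilon$ and the grid spacing $1/N$ into the extra $1/N$ term. Setting $k := \lceil \varepsilon N \rceil$, so that $\varepsilon \leq k/N \leq \varepsilon + 1/N$, the pointwise bound together with monotonicity yields $r_\mu^\varepsilon(i/N) \leq T_\mu((i+k)/N) - T_\mu(i/N)$. Summing in $i$ produces a telescoping expression
\begin{equation*}
\frac{1}{N}\sum_{i=1}^N \bigl(T_\mu((i+k)/N) - T_\mu(i/N)\bigr) = \frac{1}{N}\Bigl(\sum_{j=N+1}^{N+k}T_\mu(j/N) - \sum_{j=1}^{k}T_\mu(j/N)\Bigr) \leq \frac{k}{N}(E_\mu^+ - E_\mu^-),
\end{equation*}
where the final bound uses that each summand on the left is at most $E_\mu^+$ and each on the right is at least $E_\mu^-$. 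Since $k/N \leq \varepsilon + 1/N$, this gives \eqref{eq:crystal1} with $C_\mu = E_\mu^+ - E_\mu^-$.

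There is no serious obstacle here: both estimates follow from monotonicity and compact support via a telescoping/Fubini computation. The only subtle point worth flagging is correctly handling the boundary convention $T_\mu(s) = T_\mu(1)$ for $s \geq 1$ so that the telescoped sums and integrals remain bounded by the range $E_\mu^+ - E_\mu^-$; this is what guarantees that the constant $C_\mu$ depends only on $\mu$ and not on $\varepsilon$ or $N$.
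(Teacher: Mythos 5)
Your proposal is correct, and both bounds rest on the same two ingredients the paper uses: monotonicity of $T_\mu$ and the bounded range $T_\mu(1)-T_\mu(0)=E_\mu^+-E_\mu^-$. For \eqref{eq:crystal2} your Fubini-plus-change-of-variables computation is essentially identical to the paper's (their equations \eqref{eq:cohl}--\eqref{eq:intbound}). For \eqref{eq:crystal1} you take a mildly different and slightly more elementary path: you first pass to the crude pointwise bound $r_\mu^\varepsilon(t)\le T_\mu(t+\varepsilon)-T_\mu(t)$ (replacing the averaging window by its supremum), then telescope the resulting Riemann sum. The paper instead replaces $\varepsilon$ by the grid-aligned $p/N$ using monotonicity of $r_\mu^\varepsilon$ in $\varepsilon$, keeps the full average in the definition of $T_\mu^{p/N}$, and telescopes inside that double sum; this extra care buys a factor of $2$ (their bound has $\varepsilon/2+1/N$, yours has $\varepsilon+1/N$), which is immaterial for the lemma. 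Your route is shorter and the constant you obtain still works, so the simplification is a net win for exposition; just note, as you already flag, that the extension convention $T_\mu(s)=T_\mu(1)$ for $s\ge 1$ is what makes both the telescoped sum and the telescoped integral collapse to a quantity bounded by the range.
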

\begin{proof}
Recall throughout the proof that $T_\mu(t)$ is nondecreasing in $t$, as well as the convention $T_\mu(t) := T_\mu(1)$ for all $t \geq 1$.

First we prove \eqref{eq:crystal1}. Without loss of generality let $\varepsilon \leq 1$ (the result is trivial otherwise, since $r_\mu^\varepsilon(t) \leq T_\mu(1)-T_\mu(0) = C_\mu$ for all $t \in [0,1], \varepsilon > 0$). Choose $1 \leq p = p( \varepsilon, N) \leq N$ such that $\frac{p-1}{N} < \varepsilon \leq \frac{p}{N}$. Note that since $r_\mu^\varepsilon(t)$ is nonincreasing in the $\varepsilon$ variable, we have
\begin{align} \label{eq:staf0}
\frac{1}{N} \sum_{i=1}^N r_\mu^\varepsilon(i/N) \leq \frac{1}{N} \sum_{i=1}^N r_\mu^{p/N}(i/N).
\end{align}
Now using the definitions of $r_\mu^\varepsilon$ and $T_\mu^\varepsilon$ as in \eqref{eq:squash}, we have
\begin{align} \label{eq:staf1}
 \frac{1}{N} \sum_{i=1}^N r_\mu^{p/N}(i/N) &= \frac{1}{N} \sum_{i=1}^N \frac{1}{p/N} \int_0^{p/N} \{ T_\mu( i/N + u) - T_\mu(i/N) \} \mathrm{d}u  \nonumber \\
&=  \frac{1}{N} \sum_{i=1}^N \frac{1}{p/N} \sum_{j=1}^p \int_0^{1/N} \left\{ T_\mu\left(  \frac{j+i-1}{N} + u \right) - T_\mu \left( \frac{i}{N}\right) \right\} \mathrm{d}u \nonumber \\
&\leq \frac{1}{N} \sum_{i=1}^N \frac{1}{p/N} \sum_{j=1}^p \frac{1}{N} \left\{ T_\mu\left(  \frac{j+i}{N} \right) - T_\mu \left( \frac{i}{N}\right) \right\},
\end{align}
where the final inequality above follows from the fact that $T_\mu(t)$ is nondecreasing. 
With a view to interchanging the order of summation in \eqref{eq:staf1}, consider that for $1 \leq j \leq p$ we have
\begin{align} \label{eq:staf2}
\sum_{i=1}^N \left\{  T_\mu\left( \frac{j+i}{N} \right) - T_\mu\left( \frac{i}{N } \right)  \right\} = j T_\mu(1) - \sum_{i=1}^j T_\mu(i/N) \leq j (T_\mu(1) - T_\mu(0)),
\end{align}
where to obtain the equality above we have used $T_\mu(t) := T_\mu(1)$ for $t \geq 1$, and the inequality above follows from the fact that $T_\mu(t)$ is nondecreasing in $t$.

Now interchanging the order of summation in \eqref{eq:staf1} and plugging in \eqref{eq:staf2} we have
\begin{align} \label{eq:staf3}
 \frac{1}{N} \sum_{i=1}^N r_\mu^{p/N}(i/N) &\leq \frac{T_\mu(1) - T_\mu(0)}{pN}  \sum_{j=1}^p j =  \frac{p+1}{2N} (T_\mu(1) - T_\mu(0)).
\end{align}
Using \eqref{eq:staf0} and \eqref{eq:staf3}, and fact that $\varepsilon \geq \frac{p-1}{N}$ by construction, we obtain
\begin{align} \label{eq:staf4}
 \frac{1}{N} \sum_{i=1}^N r_\mu^{\varepsilon}(i/N) &\leq  \frac{p+1}{2N} (T_\mu(1) - T_\mu(0)) =  (T_\mu(1) - T_\mu(0)) \left( \frac{\varepsilon}{2} + \frac{1}{N} \right),
\end{align}
completing the proof of \eqref{eq:crystal1}.

As for \eqref{eq:crystal2}, note that given any $u \in [0,\varepsilon]$ we have
\begin{align} \label{eq:cohl}
\int_0^1 T_\mu(s+u) - T_\mu(s) \mathrm{d}s = u T_\mu(1) - \int_0^u T_\mu(s) \mathrm{d}s \leq u (T_\mu(1)-T_\mu(0)).
\end{align}
Using \eqref{eq:cohl} to obtain the first inequality below, we have 
\begin{align} \label{eq:intbound}
\int_0^1 r_\mu^{\varepsilon}(s) \mathrm{d}s = \frac{1}{\varepsilon} \int_0^\varepsilon \int_0^1 T_\mu(s+u) - T_\mu(s) \mathrm{d}s \mathrm{d} u \leq \frac{1}{\varepsilon}\int_0^\varepsilon (T_\mu(1) -T_\mu(0)) u~ \mathrm{d} u \leq C_{\mu} \varepsilon,
\end{align}
establishing \eqref{eq:crystal2}.

\end{proof}

We now complete the proof of Lemma \ref{lem:crab}.
\begin{proof}[Proof of Lemma \ref{lem:crab}]
Note that $T_\mu(0) \leq T_\mu^\varepsilon(t) \leq T_\mu(1)$ for all $t \in [0,1]$. Since $x \mapsto \log(z-x)$ is Lipschitz continuous for $x$ in any bounded closed interval not containing $z$, for all $s \in [0,1]^d$ we have 
\begin{align*}
|F^{\odot}_{\bm \mu^\varepsilon}(s) - F_{\bm \mu}^{\odot}(s) | \leq C_{z,\bm \mu} \left| T_{\mu_1^{\varepsilon}}(s_1) \odot \ldots \odot T_{\mu_d^{\varepsilon}}(s_d) - T_{\mu_1}(s_1) \odot \ldots \odot T_{\mu_d}(s_d) \right|,
\end{align*}
where the constant $C_{z,\bm \mu}$ does not depend on $\varepsilon$ or on $s \in [0,1]^d$. In particular, given any probability measure $\gamma$ on $[0,1]^d$ we have 
\begin{align} \label{eq:elephant}
\left| \langle F_{\bm \mu^\varepsilon}^{\odot} ,\gamma \rangle  - \langle F_{\bm \mu}^{\odot} , \gamma \rangle \right|  \leq C_{z,\bm \mu}\int_{[0,1]^d} \left| T_{\mu_1^{\varepsilon}}(s_1) \odot \ldots \odot T_{\mu_d^{\varepsilon}}(s_d) - T_{\mu_1}(s_1) \odot \ldots \odot T_{\mu_d}(s_d) \right| \gamma(\mathrm{d}s).
\end{align}
To control the integrand on the right-hand side of \eqref{eq:elephant}, we distinguish momentarily between the cases $\odot = +$ and $\odot = \times$. Note that if $\odot = +$, then
\begin{align} \label{eq:eleplus}
\left| (T_{\mu_1^{\varepsilon}}(s_1) + \ldots + T_{\mu_d^{\varepsilon}}(s_d)) - (T_{\mu_1}(s_1) + \ldots +T_{\mu_d}(s_d)) \right| = \sum_{j=1}^d r_{\mu_j^\varepsilon}(s_j),
\end{align}
with $r_\mu^\varepsilon$ as in the statement of Lemma \ref{lem:rats}.
If $\odot = \times$, then using the fact that $\prod_{j=1}^d (t_j + r_j ) - \prod_{j = 1}^d t_j = \sum_{ j =1}^d r_j \{ \prod_{k=1}^{j-1} t_k \prod_{k = j+1}^d (t_k + r_k)\}$, and the fact that $T_{\mu^\varepsilon_j}(s_j) \leq T_{\mu_j}(1)$, we have
\begin{align} \label{eq:eletimes}
\left| T_{\mu_1^{\varepsilon}}(s_1) \cdot \ldots \cdot T_{\mu_d^{\varepsilon}}(s_d) - T_{\mu_1}(s_1) \cdot \ldots \cdot T_{\mu_d}(s_d) \right| \leq C_{\bm \mu} \sum_{j=1}^d r_{\mu_j}^\varepsilon(s_j).
\end{align}
Either way, using \eqref{eq:eleplus} or \eqref{eq:eletimes} in conjunction with \eqref{eq:elephant}, for any probability measure $\gamma$ on $[0,1]^d$ we obtain
\begin{align} \label{eq:elephant2}
\left| \langle F_{\bm \mu^\varepsilon}^{\odot} ,\gamma \rangle  - \langle F_{\bm \mu}^{\odot} , \gamma \rangle \right|  \leq C_{z,\bm \mu}\sum_{j=1}^d \int_{[0,1]^d}  r_{\mu_j^\varepsilon}(s_j) \gamma(\mathrm{d}s).
\end{align}

We now prove \eqref{eq:diamond1}. 
Consider now the case where $\gamma = \tilde{\gamma}_N$ is the measure defined in \eqref{eq:tildegamma}. Then using \eqref{eq:elephant2} and the fact that $\tilde{\gamma}_N$ projects onto coordinate axes as $\frac{1}{N} \sum_{ i = 1}^N \delta_{i/N}(\cdot)$ to obtain the first inequality below, and \eqref{eq:crystal1} to obtain the second, we have
\begin{align*} 
\left| \langle F_{\bm \mu^\varepsilon}^{\odot} , \tilde{\gamma}_N \rangle  - \langle F_{\bm \mu}^{\odot} , \tilde{\gamma}_N \rangle \right|  \leq  C_{z,\bm \mu} \sum_{j=1}^d  \frac{1}{N} \sum_{i=1}^N  r_{\mu_j^\varepsilon}(i/N)  \leq C_{z,\bm \mu} (\varepsilon+1/N),
\end{align*}
establishing \eqref{eq:diamond1}.

As for \eqref{eq:diamond2}, using the fact that $\gamma$ is a copula in \eqref{eq:elephant2} to obtain the first inequality below, then using \eqref{eq:crystal2} to obtain the second, we obtain 
\begin{align} \label{eq:elephant3}
\left| \langle F_{\bm \mu^\varepsilon}^{\odot} ,\gamma \rangle  - \langle F_{\bm \mu}^{\odot} , \gamma \rangle \right|  \leq  C_{z,\bm \mu} \sum_{j=1}^d  \int_0^1 r_{\mu_j^\varepsilon}(s)\mathrm{d}s  \leq C_{z,\bm \mu} \varepsilon.
\end{align}
That proves \eqref{eq:diamond2}.

\end{proof}

We now complete the proof of Theorem \ref{thm:sym}.

\begin{proof}[Proof of Theorem \ref{thm:sym}]
Let $\bm \mu = (\mu_1,\ldots,\mu_d)$ be any $d$-tuple of probability measures with compact support. By \eqref{eq:diamond1}, for any $\varepsilon > 0$ we have 
\begin{align} \label{eq:brazil}
\frac{1}{N} \log \mathbf{E}[ e^{N  \langle F_{\bm \mu}^{\odot} , \tilde{\gamma}_N \rangle } ] = \frac{1}{N} \log \mathbf{E}[ e^{N \langle F_{\bm \mu^{\varepsilon}}^{\odot} , \tilde{\gamma}_N \rangle } ] + O_{z, \bm \mu}( \varepsilon+1/N).
\end{align}
Now, since $\bm \mu^{\varepsilon}$ is a $d$-tuple of measures with connected support, by Proposition \ref{prop:special} we have
\begin{align} \label{eq:brazil2}
\lim_{N \to \infty} \frac{1}{N} \log \mathbf{E}[ e^{N \langle F_{\bm \mu^{\varepsilon}}^{\odot} , \tilde{\gamma}_N \rangle } ] = \sup_{ \gamma } \left\{\langle F_{\bm \mu^{\varepsilon}}^{\odot} , \gamma  \rangle - H(\gamma) \right\}.
\end{align}
It follows from \eqref{eq:brazil} and \eqref{eq:brazil2} that for any $\varepsilon > 0$ we have 
\begin{align} \label{eq:scot1}
\lim \inf_{N \to \infty} \frac{1}{N} \log \mathbf{E}[ e^{N  \langle F_{\bm \mu}^{\odot} , \tilde{\gamma}_N \rangle } ]   \geq \sup_{ \gamma } \left\{ \langle F_{\bm \mu^{\varepsilon}}^{\odot} , \gamma  \rangle   - H(\gamma) \right\}  - O_{z, \bm \mu}( \varepsilon) 
\end{align}
and
\begin{align} \label{eq:scot2}
\lim \sup_{N \to \infty} \frac{1}{N} \log\mathbf{E}[ e^{N  \langle F_{\bm \mu}^{\odot} , \tilde{\gamma}_N \rangle } ]   \leq \sup_{ \gamma } \left\{ \langle F_{\bm \mu^{\varepsilon}}^{\odot} , \gamma  \rangle  - H(\gamma) \right\} + O_{z, \bm \mu}( \varepsilon) .
\end{align}
Note that $H(\gamma) = +\infty$ whenever $\gamma$ is not a copula, hence the suprema on the right-hand sides of \eqref{eq:scot1} and \eqref{eq:scot2} are effectively taken over copulas $\gamma$. Thus applying \eqref{eq:diamond2} to \eqref{eq:scot1} and \eqref{eq:scot2}, we can instead write the suprema in these equations with $\bm \mu$ in place of $\bm \mu^\varepsilon$, so that 
\begin{align*}
\lim \inf_{N \to \infty} \frac{1}{N} \log \mathbf{E}[ e^{N  \langle F_{\bm \mu}^{\odot} , \tilde{\gamma}_N \rangle } ]   \geq \sup_{ \gamma } \left\{ \langle F_{\bm \mu}^{\odot} , \gamma  \rangle   - H(\gamma) \right\}  - O_{z, \bm \mu}( \varepsilon) 
\end{align*}
and
\begin{align*}
\lim \sup_{N \to \infty} \frac{1}{N} \log\mathbf{E}[ e^{N  \langle F_{\bm \mu}^{\odot} , \tilde{\gamma}_N \rangle } ]   \leq \sup_{ \gamma } \left\{ \langle F_{\bm \mu}^{\odot} , \gamma  \rangle  - H(\gamma) \right\} + O_{z, \bm \mu}( \varepsilon) .
\end{align*}
Since $\varepsilon$ is arbitrary, sending $\varepsilon$ to zero we obtain
\begin{align*}
\lim_{N \to \infty} \frac{1}{N} \log \mathbf{E}[ e^{N  \langle F_{\bm \mu}^{\odot} , \tilde{\gamma}_N \rangle } ]  =  \sup_{ \gamma } \left\{ \langle F_{\bm \mu}^{\odot} , \gamma  \rangle  - H(\gamma) \right\},
\end{align*}
completing the proof of Theorem \ref{thm:sym}.
\end{proof}

\subsection{Proof of \eqref{eq:SGM3}}
We reiterate explicitly that by \eqref{eq:deta1}, Theorem \ref{thm:sym}, which we just proved, implies the first two equations \eqref{eq:SGM} and \eqref{eq:SGM2} of Theorem \ref{thm:SN}. To complete the proof of Theorem \ref{thm:SN}, it remains to prove the final equation, \eqref{eq:SGM3}. It turns out that this final equation can be proved fairly easily by manipulating the multiplicative case of Theorem \ref{thm:sym} with $d=2$. 

\begin{thm} \label{thm:sym2}
Let $\mu_1$ have compact support, and $z > \sup_{s \in [0,1]} T_{\mu_1}(s)$. Let $A_{N,1}$ be as in \eqref{eq:diag} and recall the random matrix $A_{N,1}^s := \Sigma_{N,1}A_{N,1} \Sigma_{N,1}^{-1}$, where $\Sigma_{N,1}$ is a uniform random $N \times N$ permutation matrix. Then  
\begin{align} \label{eq:SGM3b}
& \lim_{N \to \infty} \frac{1}{\tau N } \log \mathbf{E} \left[ \det (z -  [A_{N,1}^s]_{[\tau N]} ) \right] =\sup_{\gamma} \left\{ \int_{[0,1]^2} \log(z-T_\mu(s_1))\mathrm{1}_{\{ s_2 > 1-\tau\} } \gamma(\mathrm{d}s) - H(\gamma) \right\},
\end{align}
where the supremum is taken over all copulas on $[0,1]^2$.
\end{thm}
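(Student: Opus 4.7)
The plan is to derive \eqref{eq:SGM3b} from the $d=2$ multiplicative case of Theorem \ref{thm:sym} by encoding the minor operation as multiplication by a random coordinate projection. Introduce the auxiliary Bernoulli measure $\mu_2 := (1-\tau)\delta_0 + \tau\delta_1$, whose left-continuous quantile function is $T_{\mu_2}(s) = \mathrm{1}_{\{s > 1-\tau\}}$; then $A_{N,2} := \mathrm{diag}(T_{\mu_2}(i/N) : 1 \leq i \leq N)$ is the fixed diagonal projection with $k_N := \lceil \tau N\rceil$ ones, placed in its last $k_N$ diagonal positions. Let $\Sigma_{N,1},\Sigma_{N,2}$ be independent uniform permutation matrices and write $A^s_{N,j} := \Sigma_{N,j}A_{N,j}\Sigma_{N,j}^{-1}$. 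The key ingredient is the identity
\begin{equation}\label{eq:plan1}
\mathbf{E}\bigl[\det(zI_N - A^s_{N,2}A^s_{N,1})\bigr] = z^{N-k_N}\,\mathbf{E}\bigl[\det(zI_{k_N} - [A^s_{N,1}]_{k_N})\bigr].
\end{equation}

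To establish \eqref{eq:plan1}, first conjugate inside the determinant by $\Sigma_{N,2}^{-1}$ to obtain $\det(zI - A^s_{N,2}A^s_{N,1}) = \det(zI - A_{N,2}\,\tilde\Sigma A_{N,1}\tilde\Sigma^{-1})$ with $\tilde\Sigma := \Sigma_{N,2}^{-1}\Sigma_{N,1}$. Since $\tilde\Sigma$ is itself a uniform permutation, taking expectations yields $\mathbf{E}[\det(zI - A^s_{N,2}A^s_{N,1})] = \mathbf{E}[\det(zI - A_{N,2}A^s_{N,1})]$. Now $A_{N,2}A^s_{N,1}$ is diagonal (as both factors are), with its first $N-k_N$ diagonal entries equal to zero, whence $\det(zI - A_{N,2}A^s_{N,1}) = z^{N-k_N}\det(zI - [A^s_{N,1}]'_{k_N})$, where $[\,\cdot\,]'_{k_N}$ denotes the lower-right $k_N\times k_N$ corner. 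Finally, $[A^s_{N,1}]_{k_N}$ and $[A^s_{N,1}]'_{k_N}$ both amount to a uniform random size-$k_N$ submultiset of $\{T_{\mu_1}(i/N) : 1 \leq i \leq N\}$ placed on the diagonal, hence have the same law, giving \eqref{eq:plan1}.

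Applying Theorem \ref{thm:sym} to $(\mu_1,\mu_2)$ with $\odot = \times$, the left-hand side of \eqref{eq:plan1} divided by $N$ and logged converges to $\sup_\gamma\bigl\{\int_{[0,1]^2}\log(z - T_{\mu_1}(s_1)\mathrm{1}_{\{s_2 > 1-\tau\}})\gamma(\mathrm{d}s) - H(\gamma)\bigr\}$. The integrand decomposes as $\log z\cdot\mathrm{1}_{\{s_2\leq 1-\tau\}} + \log(z - T_{\mu_1}(s_1))\cdot\mathrm{1}_{\{s_2 > 1-\tau\}}$, and because the second marginal of every copula is uniform we have $\gamma(\{s_2 \leq 1-\tau\}) = 1-\tau$, so this supremum equals $(1-\tau)\log z + \sup_\gamma\bigl\{\int\log(z-T_{\mu_1}(s_1))\mathrm{1}_{\{s_2>1-\tau\}}\gamma(\mathrm{d}s) - H(\gamma)\bigr\}$. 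On the right-hand side of \eqref{eq:plan1}, $\tfrac{N-k_N}{N}\log z \to (1-\tau)\log z$, so by subtraction $\tfrac{1}{N}\log\mathbf{E}[\det(zI - [A^s_{N,1}]_{k_N})]$ converges (its existence is an \emph{output} of the argument, not an assumption), and its value divided by $\tau$ is the limit $L$ claimed in \eqref{eq:SGM3b}. Cancelling the $(1-\tau)\log z$ contributions from each side yields $\tau L = \sup_\gamma\{\cdots\}$.

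Two technical points deserve mention. First, applying Theorem \ref{thm:sym} requires $z > \sup_s T_{\mu_1}(s_1)T_{\mu_2}(s_2) = \max(0, E^+_{\mu_1})$, slightly stronger than $z > E^+_{\mu_1}$ when $E^+_{\mu_1} < 0$; the gap is closed by translating both $\mu_1$ and $z$ by a constant $c > 0$, an operation that preserves both sides of \eqref{eq:SGM3b} since $[\,\cdot\,]_k$ commutes with addition of a scalar multiple of the identity. Second, $k_N - \lfloor\tau N\rfloor \in \{0,1\}$ introduces at most one bounded factor into the determinant (bounded because $\mu_1$ has compact support), yielding an $O(1)$ perturbation to $\log\mathbf{E}[\det(\,\cdot\,)]$ that vanishes in the $\frac{1}{\tau N}\log$-asymptotic; hence $L$ coincides with the limit over $\lfloor \tau N\rfloor$ stated in \eqref{eq:SGM3b}. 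Neither point presents a genuine obstacle, and the determinant identity \eqref{eq:plan1}, proved via the conjugation-invariance of the uniform permutation law, is the sole substantive step.
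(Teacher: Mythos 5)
Your proof takes exactly the same route as the paper's: introduce $\mu_2 = (1-\tau)\delta_0 + \tau\delta_1$ with quantile function $\mathrm{1}_{\{s > 1-\tau\}}$, relate the expected minor characteristic polynomial to $\mathbf{E}[\det(z - A^s_{N,2}A^s_{N,1})]$ up to an explicit power of $z$, apply the $d=2$, $\odot=\times$ case of Theorem~\ref{thm:sym}, and strip off the $(1-\tau)\log z$ contribution coming from $\{s_2 \le 1-\tau\}$. Your treatment is in fact more careful than the paper's in two respects that the paper passes over in silence: the rounding discrepancy between $\lceil\tau N\rceil$ and $\lfloor\tau N\rfloor$, and the fact that when $E^+_{\mu_1} < 0$ the hypothesis $z > \sup_s T_{\mu_1}(s_1)T_{\mu_2}(s_2) = \max(0, E^+_{\mu_1})$ of Theorem~\ref{thm:sym} is strictly stronger than $z > E^+_{\mu_1}$; your translation argument closes this gap cleanly.

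One point deserves highlighting. The relation you actually reach is $\lim_{N\to\infty}\tfrac{1}{N}\log\mathbf{E}[\det(z - [A^s_{N,1}]_{\lfloor\tau N\rfloor})] = \sup_\gamma\{\cdots\}$, which in your notation is $\tau L = \sup_\gamma\{\cdots\}$. This differs by a factor of $\tau$ from what \eqref{eq:SGM3b} asserts, namely $L = \sup_\gamma\{\cdots\}$. The mismatch is a misprint in \eqref{eq:SGM3b} (and correspondingly in \eqref{eq:SGM3}): the normalization on the left-hand side should be $\tfrac{1}{N}$, not $\tfrac{1}{\tau N}$. One can see this from the paper's own concluding display \eqref{eq:carmel3}, which carries the $\tfrac{1}{N}$ normalization; from combining \eqref{eq:UGM3}, \eqref{eq:max3s} and the quadrature identity \eqref{eq:GM2m}; or from a point-mass sanity check such as $\mu_1 = \delta_0$, $\tau = 1/2$, where the right-hand side is $\tfrac{1}{2}\log z$ while $\tfrac{1}{\tau N}\log\mathbf{E}[\det] = \log z$. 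So your computation lands on the correct answer; just state explicitly that it differs from the normalization printed in the theorem rather than letting the discrepancy pass without comment.
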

\begin{proof}
Let $a_i = T_{\mu_1}(i/N)$. Note that 
\begin{align*}
\mathbf{E}\left[ \det(z - [A_{N,1}^s]_{\lfloor \tau N \rfloor} \right] = \mathbf{E} \left[ \prod_{i = 1}^{ \lfloor \tau N \rfloor } ( z - a_{\sigma_{N,1}(i)} ) \right]= z^{ - (N -\lfloor \tau N \rfloor  ) }  \mathbf{E} \left[\prod_{i=1}^{N } ( z -  a_{\sigma_{N,1}(i)} b_{\sigma_{N,2}(i)}) \right]
\end{align*}
where $b_i := \mathrm{1}_{\{ \frac{i}{N} > 1 - \tau \}}$, and $\sigma_{N,1}$ and $\sigma_{N,2}$ are independent uniform random permutations in $\mathcal{S}_N$. In particular, we have the equality in expectation
\begin{align} \label{eq:glazunov}
\mathbf{E}\left[ \det(z - [A_{N,1}^s]_{\lfloor \tau N \rfloor}) \right] =  z^{ - (N -\lfloor \tau N \rfloor  ) }  \mathbf{E} [\det ( z -  A_{N,1}^s A_{N,2}^s)]
\end{align}
where $A_{N,2}^s = \Sigma_{N,2}A_{N,2} \Sigma_{N,2}^{-1}$, for a uniform permutation matrix $\Sigma_{N,2}$, and $A_{N,2}$ is a matrix with $N - \lfloor \tau N \rfloor$ zeroes and $\lfloor \tau N \rfloor$ ones along the diagonal. That is, $A_{N,2} = \mathrm{diag}( T_{\mu_2}(i/N) : i = 1,\ldots,N)$ where  
\begin{align*}
T_{\mu_2}(s_2) = \mathrm{1}_{\{ s_2 > 1 -\tau \}}, \qquad \text{or equivalently, }\qquad \mu_2(\mathrm{d}x) = (1 - \tau)\delta_0(\mathrm{d}x)  + \tau \delta_1(\mathrm{d}x).
\end{align*}
Since $\mu_2$ also has compact support, the case $d=2, \odot = \times$ of Theorem \ref{thm:sym} applies, and consequently taking $\frac{1}{N} \log$ of both sides of \eqref{eq:glazunov} and sending $N$ to infinity we obtain
\begin{align} \label{eq:carmel}
\lim_{N \to \infty} \frac{1}{N} \log \mathbf{E}\left[ \det(z - [A_{N,1}^s]_{\lfloor \tau N \rfloor} \right] = - (1-\tau)\log z + \sup_\gamma \{ \langle F_{\mu_1,\mu_2}^\times , \gamma \rangle - H(\gamma) \},
\end{align}
where the supremum is taken over all copulas on $[0,1]^2$.

To obtain \eqref{eq:SGM3b} as written, note $T_{\mu_1}(s_1)T_{\mu_2}(s_2)=0$ for $(s_1,s_2)$ in $[0,1] \times [0,1-\tau)$. Since $\gamma$ is a copula, it gives mass $1-\tau$ to the region $[0,1] \times [0,1-\tau)$, so that 
\begin{align} \label{eq:carmel2}
\langle F_{\mu_1,\mu_2}^\times , \gamma \rangle  = \int_{[0,1]^2} \log ( z - T_{\mu_1}(s_1)T_{\mu_2}(s_2)) \gamma(\mathrm{d}s) = (1 - \tau)\log z + \int_{[0,1] \times [1-\tau,1]} \log(z-T_{\mu_1}(s_1)) \gamma(\mathrm{d}s).
\end{align}
Combining \eqref{eq:carmel} and \eqref{eq:carmel2} we thus obtain
\begin{align} \label{eq:carmel3}
\lim_{N \to \infty} \frac{1}{N} \log \mathbf{E}[ \det(z - [A_{N,1}^s]_{\lfloor \tau N \rfloor}) ] = \sup_\gamma \left\{ \int_{[0,1]^2} \log(z-T_\mu(s_1))\mathrm{1}_{\{s_2 \geq 1-\tau \} } \gamma(\mathrm{d}s)   - H(\gamma) \right\},
\end{align}
completing the proof.
\end{proof}

That completes the proof of Theorem \ref{thm:SN}: by \eqref{eq:deta1}, the equations \eqref{eq:SGM} and \eqref{eq:SGM2} are proved in Theorem \ref{thm:sym}, and Theorem \ref{thm:sym2} is the proof of equation \eqref{eq:SGM3}. 

\section{Variational calculations} \label{sec:variational} 

In this section we show how Lemma \ref{lem:bgn} may be used to solve each of the variational problems appearing in Theorem \ref{thm:main}. We state and prove two results, Theorem \ref{thm:explicit} and Theorem \ref{thm:explicit2}, which are variations on known results in the literature.

The first of these results, Theorem \ref{thm:explicit}, is concerned with free additive and muplticative convolution, and takes shape in terms of subordination functions \cite{BB, BZ, Bia, Vent1, Voi02}.

\begin{thm}[Variation on \cite{BB, BZ, Bia, Vent1, Voi02}] \label{thm:explicit}
With the symbol $\boxdot$ representing either free additive convolution $\boxplus$ or free multiplicative convolution $\boxtimes$, we have 
\begin{align} \label{eq:apollo2}
\int_{-\infty}^\infty \log(z-x) \mu \boxdot \nu (\mathrm{d}x) = - \log\omega +  \int_{-\infty}^\infty \log(\omega_\mu - x) \mu (\mathrm{d}x) + \int_{-\infty}^\infty \log(\omega_\nu - x) \nu (\mathrm{d}x)
\end{align}
where, in the case $\boxdot = \boxplus$, $\omega,\omega_\mu,\omega_\nu$ are functions of $z$ determined implicitly as the unique solutions to the equations
\begin{align}
\omega_\mu + \omega_\nu &= \omega+ z, \label{eq:add1} \\
 \frac{1}{\omega} &= G_\mu(\omega_\mu) = G_\nu(\omega_\nu),  \label{eq:add2}
\end{align}
and in the case $\boxdot = \boxtimes$, they are instead the functions of $z$ given by the unique solutions to the equations
\begin{align}
\omega_\mu \omega_\nu &= z(\omega+1),\label{eq:mult2}\\
1+ \frac{1}{\omega} &= \omega_\mu G_\mu(\omega_\mu) = \omega_\nu G_\nu(\omega_\nu) . \label{eq:mult1}
\end{align}
\end{thm}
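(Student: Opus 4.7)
My plan is to derive both halves of Theorem \ref{thm:explicit} by plugging the specific cost function into Lemma \ref{lem:bgn} and reading off the resulting optimality structure. For the additive case, I would take $c(x,y)=\log(z-(x+y))$ in the variational formula \eqref{eq:max1}; the hypothesis $z>E_\mu^++E_\nu^+$ together with compactness ensures $c$ is bounded on $\mathrm{supp}(\mu)\times\mathrm{supp}(\nu)$, so Lemma \ref{lem:bgn} yields a unique optimal coupling $\Pi_{*,z}$ whose Radon--Nikodym derivative against $\mu\otimes\nu$ has the form $A(x)B(y)(z-x-y)$ for some measurable $A,B\ge 0$. The first step is to marginalise in $y$: setting $a:=\int B\,\mathrm{d}\nu$ and $b:=\int yB(y)\,\nu(\mathrm{d}y)$, the constraint $\int A(x)B(y)(z-x-y)\,\nu(\mathrm{d}y)=1$ collapses to $A(x)\bigl(a(z-x)-b\bigr)=1$, giving $A(x)=1/(a(\omega_\mu-x))$ with $\omega_\mu:=z-b/a$. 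Symmetrically $B(y)=1/(c(\omega_\nu-y))$ with $\omega_\nu:=z-d/c$, where $c:=\int A\,\mathrm{d}\mu$ and $d:=\int xA(x)\,\mu(\mathrm{d}x)$.

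Next I would substitute this explicit form back into one marginal constraint to extract the subordination equations. Writing $\frac{z-x-y}{\omega_\nu-y}=1+\frac{z-x-\omega_\nu}{\omega_\nu-y}$ and integrating against $\nu(\mathrm{d}y)$, the marginal condition reduces to $(ac)(\omega_\mu-x)=1+(z-x-\omega_\nu)G_\nu(\omega_\nu)$; matching coefficients of $x$ and of the constant term forces $G_\nu(\omega_\nu)=ac$ and $\omega_\mu+\omega_\nu-z=1/(ac)$. Introducing $\omega:=1/(ac)$ yields \eqref{eq:add1}, and by symmetry $G_\mu(\omega_\mu)=G_\nu(\omega_\nu)=1/\omega$, which is \eqref{eq:add2}. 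Finally, applying the identity \eqref{eq:ecom} gives
\begin{align*}
\sup_\Pi \mathcal{F}_c[\Pi]&=-\mathbf{E}_\mu[\log A(X)]-\mathbf{E}_\nu[\log B(Y)] \\
&=\log(ac)+\int\log(\omega_\mu-x)\,\mu(\mathrm{d}x)+\int\log(\omega_\nu-y)\,\nu(\mathrm{d}y),
\end{align*}
and $\log(ac)=-\log\omega$ delivers \eqref{eq:apollo2} in the additive setting.

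The multiplicative case proceeds identically in structure, taking $c(x,y)=\log(z-xy)$ in \eqref{eq:max2}; the hypotheses on $z$ and on $\mathrm{supp}(\nu)\subseteq[0,\infty)$ again guarantee boundedness of $c$. The marginal constraints force $A(x)=1/(b(\omega_\mu-x))$ and $B(y)=1/(d(\omega_\nu-y))$ with $\omega_\mu:=az/b$, $\omega_\nu:=cz/d$, and the analogous decomposition $\frac{z-xy}{\omega_\nu-y}=x+\frac{z-x\omega_\nu}{\omega_\nu-y}$ produces, with $\omega:=1/(bd)$, the coupled system \eqref{eq:mult2}--\eqref{eq:mult1}; \eqref{eq:ecom} once again yields \eqref{eq:apollo2}. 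Uniqueness of the triple $(\omega,\omega_\mu,\omega_\nu)$ satisfying the subordination equations in each case drops out of the uniqueness clause of Lemma \ref{lem:bgn}: any such triple assembles into a density of the required product-times-cost form with correct marginals, hence must coincide with $\Pi_{*,z}$.

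The main obstacle I anticipate is purely of a housekeeping nature: one must verify that the formal parameters $a,b,c,d$ extracted from the marginals are strictly positive and that $\omega_\mu>E_\mu^+$, $\omega_\nu>E_\nu^+$ (and in the multiplicative case $\omega>0$ with the appropriate sign conventions), so that the Cauchy transforms $G_\mu(\omega_\mu)$ and $G_\nu(\omega_\nu)$ in \eqref{eq:add2} and \eqref{eq:mult1} are well defined. This follows from the strict positivity of the density $A(x)B(y)(z-x-y)$ (resp.\ $A(x)B(y)(z-xy)$) on the product support, together with the monotonicity of the real Cauchy transforms on the unbounded component of their domain, but some case-checking is needed to exclude degenerate scalings of $(A,B)$.
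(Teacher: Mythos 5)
Your argument is correct and follows essentially the same route as the paper's proof: apply Lemma \ref{lem:bgn} and \eqref{eq:ecom} to get the optimal coupling $A(X)B(Y)(z-X\odot Y)$, deduce that $A$ and $B$ must be of the rational form $1/(\text{const}\cdot(\omega_\mu-x))$ and $1/(\text{const}\cdot(\omega_\nu-y))$, then substitute back into a marginal constraint and match the $x$-linear and constant terms to recover \eqref{eq:add1}--\eqref{eq:add2} (resp.\ \eqref{eq:mult2}--\eqref{eq:mult1}). The only cosmetic difference is that you obtain the rational form for $A,B$ by explicitly naming the moments $a,b,c,d$ of the unknown functions, whereas the paper simply asserts that the marginal constraint forces this form before matching coefficients.
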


{
Our next result is concerned with free compression, and is a variant on existing subordination results for free compression, see e.g. \cite{BN}.
}
\begin{thm}[Variation on \cite{BN}] \label{thm:explicit2}
We have 
\begin{align} \label{eq:apollo3}
\int_{-\infty}^\infty \log(z-x)  [\mu]_\tau (\mathrm{d}x) = \frac{1}{\tau} \int_{-\infty}^\infty \log(\omega-x)\mu(\mathrm{d}x)  + \log \tau - \frac{1-\tau}{\tau} \log \frac{\omega-z}{1-\tau},
\end{align}
where $\omega$ is the function of $z$ (and $\tau$) given by the unique solution to the equation
\begin{align} \label{eq:comp1}
1 = \frac{\omega-z}{1-\tau} G_\mu(\omega ).
\end{align}
\end{thm}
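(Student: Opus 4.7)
The plan is to apply Lemma \ref{lem:bgn} directly to the right-hand side of \eqref{eq:max3} of Theorem \ref{thm:main}, with cost function $c(x,y) = \log(z-x)\mathbf{1}_{\{y > 1-\tau\}}$ and reference product measure $\mu \otimes \nu_{\mathrm{uni}}$. This cost is bounded since $z > E_\mu^+$, so Lemma \ref{lem:bgn} applies: the unique maximising coupling $\Pi_{*,z}$ has Radon--Nikodym derivative of the form $A(x)B(y)e^{c(x,y)}$ against $\mu \otimes \nu_{\mathrm{uni}}$, and by \eqref{eq:ecom} the supremum equals $-\mathbf{E}_\mu[\log A(X)] - \mathbf{E}_{\nu_{\mathrm{uni}}}[\log B(Y)]$.

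The key structural observation is that because $c(x,y)$ depends on $y$ only through the indicator $\mathbf{1}_{\{y > 1-\tau\}}$, I may take $B$ to be piecewise constant with values $B_1$ on $[0,1-\tau]$ and $B_2$ on $(1-\tau,1]$. Integrating $A(x)B(y)e^{c(x,y)}$ over $y$ and imposing the first marginal condition yields
\begin{equation*}
A(x) = \frac{1}{(1-\tau)B_1 + \tau B_2 (z-x)},
\end{equation*}
which I reparametrize as $A(x) = 1/[b_2(\omega - x)]$ by setting $b_2 := \tau B_2$ and $\omega := z + (1-\tau)B_1/(\tau B_2)$. The remaining marginal constraints, obtained by integrating $A(x)B(y)e^{c(x,y)}$ against $\mu$ for each of the two branches of $B$, read $B_1 \int A(x)\mu(\mathrm{d}x) = 1$ and $B_2 \int (z-x)A(x)\mu(\mathrm{d}x) = 1$. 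The first gives $B_1 = b_2/G_\mu(\omega)$, while the second, after the decomposition $z - x = (\omega-x) - (\omega-z)$, yields $B_2 = b_2/[1 - (\omega-z)G_\mu(\omega)]$. Consistency with the defining relation $b_2 = \tau B_2$ then forces $(\omega-z)G_\mu(\omega) = 1-\tau$, which is precisely \eqref{eq:comp1}. A monotonicity argument for $\omega \mapsto (\omega-z)G_\mu(\omega)$ on $(z,\infty)$, where the function runs continuously from $0$ at $z^+$ up to the limiting value $1$ at $\infty$, secures existence and uniqueness of the solution $\omega > z$.

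To conclude, I substitute into \eqref{eq:ecom}: the contribution $-\mathbf{E}_\mu[\log A(X)] = \log b_2 + \int \log(\omega-x)\mu(\mathrm{d}x)$ combines with $-\mathbf{E}_{\nu_{\mathrm{uni}}}[\log B(Y)] = -(1-\tau)\log(b_2/G_\mu(\omega)) - \tau\log(b_2/\tau)$, causing the $\log b_2$ terms to cancel and leaving
\begin{equation*}
\int \log(\omega-x)\mu(\mathrm{d}x) + (1-\tau)\log G_\mu(\omega) + \tau \log \tau.
\end{equation*}
Rewriting $\log G_\mu(\omega) = \log(1-\tau) - \log(\omega-z)$ via \eqref{eq:comp1}, and recalling from \eqref{eq:max3} that this supremum equals $\tau \int \log(z-x)[\mu]_\tau(\mathrm{d}x)$, a final division by $\tau$ delivers \eqref{eq:apollo3}. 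The main obstacle is really the bookkeeping: choosing the correct parametrization via $\omega$ so that the subordination relation \eqref{eq:comp1} emerges naturally from the two marginal constraints, while simultaneously tracking the two piecewise-constant branches of $B$ and ensuring the $\log b_2$ terms cancel at the end.
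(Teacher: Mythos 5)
Your argument is correct and follows essentially the same route as the paper: apply Lemma \ref{lem:bgn} to \eqref{eq:max3}, deduce from the marginal constraints that $B$ is piecewise constant and $A(x)$ is the reciprocal of an affine function of $x$, reparametrize to produce $\omega$ and the subordination relation \eqref{eq:comp1}, and substitute back via \eqref{eq:ecom} to obtain \eqref{eq:apollo3}; the paper's constants $C_1, C_2, \omega$ match your $b_2, B_1, B_2, \omega$ up to bookkeeping. Your monotonicity argument for the existence and uniqueness of $\omega$ is a valid addition the paper leaves implicit, since $\frac{d}{d\omega}\bigl[(\omega-z)G_\mu(\omega)\bigr] = \int_{-\infty}^\infty \frac{z-x}{(\omega-x)^2}\,\mu(\mathrm{d}x) > 0$ for $z > E_\mu^+$ and $\omega > z$, so the map runs strictly monotonically from $0$ to $1$ and hits $1-\tau$ exactly once.
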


We now give proofs of both Theorem \ref{thm:explicit} and Theorem \ref{thm:explicit2}.

\begin{proof}[Proof of Theorem \ref{thm:explicit}]
Throughout this proof we use $\odot$ as a placeholder to represent either addition or multiplication of real numbers, and use $\boxdot$ to denote the corresponding additive or free convolution. 

Combining Theorem \ref{thm:main} with Lemma \ref{lem:bgn}, as well as the resulting equation \eqref{eq:ecom}, we see that 
\begin{align} \label{eq:max1bb}
\int_{-\infty}^\infty \log(z-x) ( \mu \boxdot \nu )(\mathrm{d}x) = - \mathbf{E}_\mu [ \log A_z(X)] - \mathbf{E}_\nu [\log B_z(Y)],
\end{align}
where $A_z$ and $B_z$ are the unique functions such that $\Pi_{*,z}$ defined by 
\begin{align} \label{eq:claw3}
\frac{ \mathrm{d} \Pi_{*,z}}{ \mathrm{d}(\mu \otimes \nu)} = A_z(X)B_z(Y) (z- X \odot Y)
\end{align}
is a coupling of the probability measures $\mu$ and $\nu$. In other words, $A_z$ and $B_z$ are the unique functions (depending on $z$) such that
\begin{align} \label{eq:claw}
\mathbf{E}_{\mu \otimes \nu} \left[ \phi(X)   A_z(X)B_z(Y)  (z- X \odot Y) \right] = \mathbf{E}_\mu [\phi(X)]  \qquad \text{for all bounded and measurable $\phi$},
\end{align}
with a similar equation holding for the expectation involving $\phi(Y)$ instead.

Note that \eqref{eq:claw} implies that for $\mu$-almost all $x \in \mathbb{R}$ we have 
\begin{align} \label{eq:claw2}
1 = A_z(x) \mathbf{E}_\nu [ B_z(Y) ( z - x \odot Y) ].
\end{align}
By considering $\odot = +$ and $\odot = \times$ separately, one can see that \eqref{eq:claw2} forces $A_z(x)$ to take the form $A_z(x) = \frac{1}{c_1 - c_2 x}$ for some $c_1,c_2$ depending on $z$. By symmetry, $B_z(y)$ also takes this form, so that 
\begin{align} \label{eq:wien}
A_z(X)B_z(Y) = \frac{ \omega}{ (\omega_\mu - X)(\omega_\nu - Y)} 
\end{align}
for some functions $\omega,\omega_\mu,\omega_\nu$ of $z$. 

The expression \eqref{eq:apollo2} now follows from \eqref{eq:max1bb} and \eqref{eq:wien}. It remains to derive the relations \eqref{eq:add1} and \eqref{eq:add2} in the additive case, and \eqref{eq:mult2} and \eqref{eq:mult1} in the multiplicative case.

Appealing again to \eqref{eq:claw2} with the newfound knowledge that $A_z(X)B_z(Y)$ takes the form set out in \eqref{eq:wien}, we obtain 
\begin{align} \label{eq:mensch}
1 = \frac{ \omega}{ \omega_\mu - x} \mathbf{E}_\nu \left[ \frac{ z- x \odot Y}{ \omega_\nu - Y} \right] \qquad \text{for $\mu$-almost all $x \in \mathbb{R}.$}
\end{align}
We now distinguish between the additive and multiplicative cases. 

If $\odot = +$, then multiplying both sides of \eqref{eq:mensch} by $\omega_\mu - x$ and equating the coefficients of $x$ and the constant coefficients, provided $\mu$ is not supported on a singleton we obtain the system of equations 
\begin{align} \label{eq:prawn}
1 = \omega G_\nu(\omega_\nu) \qquad \text{and} \qquad \omega_\mu = \omega \left( 1 + (z - \omega_\nu) G_\nu(\omega_\nu)  \right).
\end{align}
Using \eqref{eq:prawn}, and the analogous equation with $\mu$ and $\nu$ swapped, we obtain \eqref{eq:add1} and \eqref{eq:add2}. 

If $\odot = \times$, then multiplying both sides of \eqref{eq:mensch} by $\omega_\mu - x$ and equating the coefficients of $x$ and the constant coefficients, we obtain the system of equations 
\begin{align} \label{eq:prawn2}
1 = \omega( \omega_\nu G_\nu(\omega_\nu)-1) \qquad \text{and} \qquad \omega_\mu = z \omega G_\nu(\omega_\nu).
\end{align}
Using \eqref{eq:prawn2}, and the analogous equation with $\mu$ and $\nu$ swapped, we obtain \eqref{eq:mult2} and \eqref{eq:mult1}. \end{proof}

\begin{proof}[Proof of Theorem \ref{thm:explicit2}]
Combining the final equation of Theorem \ref{thm:main} with Lemma \ref{lem:bgn}, as well as the resulting equation \eqref{eq:ecom}, we see that 
\begin{align} \label{eq:max1bbb}
\tau \int_{-\infty}^\infty \log(z-x) [\mu]_\tau (\mathrm{d}x) = - \mathbf{E}_{\mu}[\log A_{z,\tau}(X)] - \mathbf{E}_{\nu_{\mathrm{uni}}}[\log B_{z,\tau}(Y)],
\end{align}
where $A_{z,\tau},B_{z,\tau}:\mathbb{R} \to \mathbb{R}$ are the unique functions depending on $z$ and on $\tau$ such that $\Pi_{*,z,\tau}$ defined by 
\begin{align} \label{eq:proko}
\frac{ \mathrm{d} \Pi_{*,z,\tau}}{ \mathrm{d}(\mu \otimes \nu)} = A_{z,\tau}(X)B_{z,\tau}(Y) (z-X)^{\mathrm{1}_{\{Y > 1- \tau\}} }
\end{align}
is a coupling of the probability measures $\mu$ and $\nu_{\mathrm{uni}}$. 

Like in the proof of Theorem \ref{thm:explicit}, in order for $\Pi_{*,z,\tau}$ to be a coupling of $\mu$ and the uniform law $\nu_{\mathrm{uni}}$ on $[0,1]$ we must have both
\begin{align} \label{eq:lobster}
1 = A_{z,\tau}(x) \mathbf{E}_{\nu_{\mathrm{uni}}} \left[ B_{z,\tau}(Y) (z-x)^{\mathrm{1}_{\{Y > 1- \tau\}} } \right] \qquad \text{for $\mu$-almost every $x \in \mathbb{R}$}
\end{align}
and
\begin{align} \label{eq:lobster3}
1 = B_{z,\tau}(y) \mathbf{E}_\mu\left[ A_{z,\tau}(X) (z-X)^{\mathrm{1}_{\{y > 1- \tau\}}} \right] \qquad \text{for $\mathrm{Leb}$-almost every $y \in [0,1]$}.
\end{align}
This forces $A_{z,\tau}(x) = \frac{1}{c_1-c_2x}$ and $B_{z,\tau}(y) = c_3\mathrm{1}_{y \leq 1-\tau } + c_4 \mathrm{1}_{y > 1 - \tau } $ for some $c_1,c_2,c_3,c_4$ depending on $z$ and $\tau$, so that amalgamating $c_1,c_2,c_3,c_4$ we have
\begin{align} \label{eq:lobster2}
A_{z,\tau}(X)B_{z,\tau}(Y) = \frac{C_1 \mathrm{1}_{Y > 1 - \tau }  + C_2 \mathrm{1}_{ Y \leq 1- \tau} }{ \omega - X}
\end{align}
for some functions $C_1,C_2,\omega$ of $z$ and $\tau$. By \eqref{eq:max1bbb} we then have 
\begin{align} \label{eq:max1bbbb}
\tau \int_{-\infty}^\infty \log(z-x) [\mu]_\tau (\mathrm{d}x) &= \mathbf{E}_\mu [ \log(  \omega - X) ] - \mathbf{E}_{\nu_{\mathrm{uni}}}[ \log \left( C_1 \mathrm{1}_{Y > 1 - \tau }  + C_2 \mathrm{1}_{ Y \leq 1- \tau}  \right) ] \nonumber
\\&= \int_{-\infty}^\infty \log(\omega-x)\mu(\mathrm{d}x)  - \tau \log C_1 - (1-\tau) \log C_2.
\end{align}
We now find $C_1$ and $C_2$. Using \eqref{eq:lobster2} in \eqref{eq:lobster}, we obtain $\omega - x = C_1 \tau (z-x) + C_2 (1-\tau)$ for $\mu$-a.e. $x \in \mathbb{R}$, which (provided $\mu$ is not concentrated on an atom) implies
\begin{align} \label{eq:eye}
C_1 = \frac{1}{\tau} \qquad \text{and} \qquad C_2 = \frac{ \omega - z}{1- \tau}.
\end{align}
Plugging \eqref{eq:eye} into \eqref{eq:max1bbbb} we obtain \eqref{eq:apollo3}. It remains to show that $\omega$ satisfies \eqref{eq:comp1}. 
In this direction, plugging \eqref{eq:eye} into \eqref{eq:lobster2} and then subsequently using \eqref{eq:lobster3}, we obtain the system of equations
\begin{align} \label{eq:lobster4}
1 = \frac{1}{\tau} \mathbf{E}_\mu \left[ \frac{z-X}{\omega-X} \right] \qquad \text{and} \qquad  1 = \frac{\omega-z}{1-\tau} \mathbf{E}_\mu \left[ \frac{1}{\omega-X} \right],
\end{align}
the first of which follows from setting $y > 1-\tau$ in \eqref{eq:lobster3}, and the latter of which follows from setting $y \leq 1-\tau$ in \eqref{eq:lobster3}. Both equations in \eqref{eq:lobster4} are equivalent to one another, and imply \eqref{eq:comp1}.
\end{proof}

\section{Further calculations in free probability} \label{sec:free}

For the sake of completeness, in this section we show how the usual formulas for free convolutions and free compression can be derived from scratch using Theorems \ref{thm:explicit} and \ref{thm:explicit2}. We stress that all of the results in Section \ref{sec:71} through \ref{sec:compr} are well-known in the literature.

Recall through this section that $G_\mu(z) := \int_{-\infty}^\infty \frac{1}{z-x}\pi(\mathrm{d}x)$ denotes the Cauchy transform.
 
\subsection{Relation to standard formulations in free probability} \label{sec:71}
Let us note that by differentiating the equation \eqref{eq:apollo2} with respect to $z$ we obtain 
\begin{align} \label{eq:da}
G_{\mu \boxdot \nu}(z) = - \frac{\omega' }{\omega} + \omega_\mu'  G_\mu(\omega_\mu) + \omega_\nu'G_\nu(\omega_\nu).
\end{align}
where $\omega',\omega_\mu',\omega_\nu'$ denote derivatives with respect to $z$, and in the case that $\boxdot = \boxplus$, $\omega_\mu,\omega_\nu,\omega$ satisfy \eqref{eq:add1} and \eqref{eq:add2}, and in the case that $\boxdot = \boxtimes$, they satisfy \eqref{eq:mult2} and \eqref{eq:mult1}. 


\begin{thm}(\cite{MS}) \label{thm:addmult}
We have
\begin{align} \label{eq:nattan0}
G_{\mu \boxplus \nu } (z) = 1/\omega,
\end{align}
where $\omega = \omega(z)$ is the unique solution to the system of equations \eqref{eq:add1}-\eqref{eq:add2}.

We have
\begin{align} \label{eq:nattan1}
G_{\mu \boxtimes \nu } (z) = \frac{\omega+1}{z \omega }
\end{align}
where $\omega = \omega(z)$ is the unique solution to the system of equations \eqref{eq:mult2}-\eqref{eq:mult1}.
\end{thm}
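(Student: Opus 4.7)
My plan is to prove both identities by starting from equation \eqref{eq:da}, which we obtain by differentiating Theorem \ref{thm:explicit} in $z$, and then substituting in the subordination relations to collapse the right-hand side. The calculation is essentially algebraic, and I expect no serious obstacles: the main thing is to be careful with implicit differentiation.

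For the additive case \eqref{eq:nattan0}, I would first differentiate \eqref{eq:add1} in $z$ to obtain $\omega_\mu' + \omega_\nu' = \omega' + 1$. Next, \eqref{eq:add2} tells us directly that $G_\mu(\omega_\mu) = G_\nu(\omega_\nu) = 1/\omega$. Plugging both facts into \eqref{eq:da} gives
\begin{equation*}
G_{\mu\boxplus\nu}(z) = -\frac{\omega'}{\omega} + (\omega_\mu' + \omega_\nu')\cdot \frac{1}{\omega} = -\frac{\omega'}{\omega} + \frac{\omega' + 1}{\omega} = \frac{1}{\omega},
\end{equation*}
which is \eqref{eq:nattan0}.

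For the multiplicative case \eqref{eq:nattan1}, I proceed analogously. Differentiating \eqref{eq:mult2} in $z$ yields $\omega_\mu'\omega_\nu + \omega_\mu\omega_\nu' = (\omega+1) + z\omega'$. From \eqref{eq:mult1} we read off $G_\mu(\omega_\mu) = (\omega+1)/(\omega\,\omega_\mu)$ and $G_\nu(\omega_\nu) = (\omega+1)/(\omega\,\omega_\nu)$. Substituting these into \eqref{eq:da} and combining over a common denominator, the $\omega_\mu', \omega_\nu'$ terms assemble as
\begin{equation*}
\omega_\mu' G_\mu(\omega_\mu) + \omega_\nu' G_\nu(\omega_\nu) = \frac{\omega+1}{\omega}\left(\frac{\omega_\mu'}{\omega_\mu} + \frac{\omega_\nu'}{\omega_\nu}\right) = \frac{\omega+1}{\omega}\cdot\frac{\omega_\mu'\omega_\nu+\omega_\mu\omega_\nu'}{\omega_\mu\omega_\nu}.
\end{equation*}
Using the two identities above (both the differentiated \eqref{eq:mult2} in the numerator and \eqref{eq:mult2} itself in the denominator), this simplifies to $\frac{(\omega+1)+z\omega'}{z\omega} = \frac{\omega+1}{z\omega} + \frac{\omega'}{\omega}$. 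Thus
\begin{equation*}
G_{\mu\boxtimes\nu}(z) = -\frac{\omega'}{\omega} + \frac{\omega+1}{z\omega} + \frac{\omega'}{\omega} = \frac{\omega+1}{z\omega},
\end{equation*}
which is \eqref{eq:nattan1}.

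The only subtlety is justifying the differentiation under \eqref{eq:apollo2} to get \eqref{eq:da}, but this is already granted in the statement of the theorem (the paper asserts \eqref{eq:da} is obtained by differentiating \eqref{eq:apollo2}). Once \eqref{eq:da} is in hand, the manipulations above are elementary, and the miraculous-looking cancellation of $\omega'/\omega$ in both cases reflects the fact that the subordination equations are precisely designed to make the log-potential of the free convolution depend on $z$ only through the combinations appearing in \eqref{eq:apollo2}. I would therefore expect no hard obstacle — the content lies entirely in having set up the right subordination equations in Theorem \ref{thm:explicit}, and Theorem \ref{thm:addmult} is essentially a corollary.
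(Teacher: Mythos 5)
Your proposal is correct and matches the paper's argument essentially line for line: both start from \eqref{eq:da}, substitute the subordination relations \eqref{eq:add2} (resp.\ \eqref{eq:mult1}) into the $G_\mu,G_\nu$ factors, and close by differentiating \eqref{eq:add1} (resp.\ \eqref{eq:mult2}) in $z$. The only cosmetic difference is that in the multiplicative case the paper packages $\omega_\mu'/\omega_\mu + \omega_\nu'/\omega_\nu$ as $\tfrac{\mathrm{d}}{\mathrm{d}z}\log(\omega_\mu\omega_\nu)$ before substituting $\omega_\mu\omega_\nu = z(\omega+1)$, whereas you differentiate $\omega_\mu\omega_\nu = z(\omega+1)$ first and then combine over a common denominator, but this is the same computation rearranged.
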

\begin{proof}
In the case $\boxdot = \boxplus$, using \eqref{eq:add2}, \eqref{eq:da} simplifies to 
\begin{align} \label{eq:nattan}
G_{\mu \boxplus \nu}(z) = - \frac{\omega' }{\omega} + \frac{\omega_\mu'}{\omega} + \frac{\omega_\nu'}{\omega}.
\end{align}
Differentiating \eqref{eq:add1} with respect to $z$, we have $1 = - \omega' + \omega_\mu'+\omega_\nu'$, and plugging this into \eqref{eq:nattan} completes the proof of \eqref{eq:nattan0}. 

As for the case where $\boxdot = \boxtimes$, using \eqref{eq:mult1} in \eqref{eq:da} to obtain the first equality below, and rearranging to obtain the second, we have
\begin{align*}
G_{\mu \boxtimes \nu}(z) &= - \frac{\omega' }{\omega} + \left( 1 + \frac{1}{\omega} \right) \left( \frac{\omega_\mu'}{\omega_\mu} + \frac{\omega_\nu'}{\omega_\nu} \right)= - \frac{\omega' }{\omega} +  \left( 1 + \frac{1}{\omega} \right) \frac{\mathrm{d}}{\mathrm{d}z} \log(\omega_\mu \omega_\nu).
\end{align*}
Now appealing to \eqref{eq:mult2} we have $\omega_\mu \omega_\nu = z ( \omega+1)$ and thus
\begin{align*}
G_{\mu \boxtimes \nu}(z) &= - \frac{\omega' }{\omega} +  \left( 1 + \frac{1}{\omega} \right) \frac{\mathrm{d}}{\mathrm{d}z} \left[ \log z + \log ( \omega+1) \right]= - \frac{\omega' }{\omega} +  \left( 1 + \frac{1}{\omega} \right)  \left[ \frac{1}{z} + \frac{\omega'}{\omega+1} \right] = \frac{\omega+1}{z \omega },
\end{align*}
as required. 
\end{proof}

We now state and prove an analogous result for free compression.


\begin{thm}(\cite{NS}) \label{thm:compression}
We have
\begin{align} \label{eq:compo}
G_{[\mu]_\tau}(z) = \frac{1-\tau}{\tau} \frac{1}{\omega - z},
\end{align}
where $\omega = \omega(z,\tau)$ is the solution to the equation \eqref{eq:comp1}.
\end{thm}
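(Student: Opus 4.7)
The plan is to mimic the proof of Theorem \ref{thm:addmult}: differentiate the log-potential identity \eqref{eq:apollo3} with respect to $z$ and then invoke the subordination relation \eqref{eq:comp1} to force a cancellation that leaves only the claimed expression.

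First I would write $\omega = \omega(z)$ and let $\omega' = \mathrm{d}\omega/\mathrm{d}z$. Differentiating the left-hand side of \eqref{eq:apollo3} under the integral produces $G_{[\mu]_\tau}(z)$. Differentiating the right-hand side, the first term contributes $\tfrac{1}{\tau}\,\omega'\,G_\mu(\omega)$ by the chain rule (noting $\tfrac{\mathrm{d}}{\mathrm{d}z}\log(\omega-x) = \omega'/(\omega-x)$), the constant $\log\tau$ vanishes, and the final term contributes $-\tfrac{1-\tau}{\tau}\cdot\tfrac{\omega'-1}{\omega-z}$ since $\log\tfrac{\omega-z}{1-\tau} = \log(\omega-z) - \log(1-\tau)$. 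Assembling these,
\begin{equation*}
G_{[\mu]_\tau}(z) \;=\; \frac{\omega'}{\tau}\,G_\mu(\omega) \;-\; \frac{1-\tau}{\tau}\cdot\frac{\omega'-1}{\omega-z}.
\end{equation*}

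Next I would substitute the subordination equation \eqref{eq:comp1}, which can be rewritten as $G_\mu(\omega) = (1-\tau)/(\omega-z)$. Plugging this into the first term yields
\begin{equation*}
G_{[\mu]_\tau}(z) \;=\; \frac{1-\tau}{\tau(\omega-z)}\Bigl[\omega' - (\omega'-1)\Bigr] \;=\; \frac{1-\tau}{\tau}\cdot\frac{1}{\omega-z},
\end{equation*}
which is exactly \eqref{eq:compo}. A small justification is needed for the interchange of differentiation and the integral defining $G_\mu(\omega)$, but this follows from compact support of $\mu$ and the assumption $z > E_\mu^+$, which ensures $\omega - x$ stays bounded away from zero uniformly in $x \in \mathrm{supp}(\mu)$ for $z$ in a neighbourhood of the target point (differentiability of $\omega(z)$ itself follows from the implicit function theorem applied to \eqref{eq:comp1}, since $\partial_\omega[\tfrac{\omega-z}{1-\tau}G_\mu(\omega)] \neq 0$ in this regime).

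There is no serious obstacle: the only thing that could go wrong is a sign or a chain-rule error, and the cancellation $\omega' - (\omega' - 1) = 1$ is precisely the mechanism that (as in the proof of \eqref{eq:nattan0}) eliminates the dependence on $\omega'$ and produces a closed-form Cauchy transform in terms of $\omega$ alone.
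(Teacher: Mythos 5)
Your proof is correct and takes essentially the same approach as the paper: differentiate \eqref{eq:apollo3} with respect to $z$ to obtain the expression $G_{[\mu]_\tau}(z) = \tfrac{\omega'}{\tau}G_\mu(\omega) - \tfrac{1-\tau}{\tau}\tfrac{\omega'-1}{\omega-z}$, then substitute \eqref{eq:comp1} to cancel the $\omega'$ terms. Your added remarks on differentiating under the integral and the implicit function theorem are sensible supporting details the paper leaves implicit.
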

\begin{proof}
Differentiating \eqref{eq:apollo3} with respect to $z$ we obtain 
\begin{align} \label{eq:apollo4}
G_{[\mu]_\tau}(z) = \frac{\omega'}{\tau} G_\mu(\omega) - \frac{1-\tau}{\tau} \frac{\omega'-1}{\omega-z}.
\end{align}
The equation \eqref{eq:compo} follows from using \eqref{eq:comp1} in \eqref{eq:apollo4}.
\end{proof}

\subsection{$R$-transform}
Given a probability measure $\mu$ and its associated Cauchy transform $G_\mu(z)$, we define its $R$-transform $R_\mu$ implicitly through the relation
\begin{align} \label{eq:Rdef}
z = \frac{1}{G_\mu(z)} + R_\mu(G_\mu(z)).
\end{align} 

We now give a formal proof of the following well-known result as a consequence of Theorem \ref{thm:addmult}.

\begin{thm}[\cite{Vcon1}]
We have 
\begin{align*}
R_{\mu \boxplus \nu}(s) = R_\mu(s) + R_\nu(s).
\end{align*}
\end{thm}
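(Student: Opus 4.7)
The plan is to read off the $R$-transform relation directly from the subordination equations \eqref{eq:add1}--\eqref{eq:add2} together with the identity $G_{\mu \boxplus \nu}(z) = 1/\omega(z)$ from Theorem \ref{thm:addmult}. The definition \eqref{eq:Rdef} of the $R$-transform is equivalent to the statement $G_\pi^{-1}(s) = 1/s + R_\pi(s)$ for any probability measure $\pi$, so the strategy is to identify the quantities $\omega, \omega_\mu, \omega_\nu$ as values of inverse Cauchy transforms at a common argument $s$, and then match coefficients in \eqref{eq:add1}.

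Concretely, I would fix $z$ in the appropriate range and set $s := G_{\mu \boxplus \nu}(z)$. By Theorem \ref{thm:addmult} we have $\omega(z) = 1/s$, and by \eqref{eq:add2} the subordination functions satisfy $G_\mu(\omega_\mu) = G_\nu(\omega_\nu) = 1/\omega = s$, so that $\omega_\mu = G_\mu^{-1}(s)$ and $\omega_\nu = G_\nu^{-1}(s)$, at least locally. Using the definition of the $R$-transform this rewrites as
\begin{equation*}
\omega_\mu = \frac{1}{s} + R_\mu(s), \qquad \omega_\nu = \frac{1}{s} + R_\nu(s), \qquad \omega = \frac{1}{s},
\end{equation*}
and also $z = G_{\mu \boxplus \nu}^{-1}(s) = 1/s + R_{\mu \boxplus \nu}(s)$.

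Substituting these four expressions into the additive subordination relation \eqref{eq:add1}, namely $\omega_\mu + \omega_\nu = \omega + z$, the $1/s$ terms cancel symmetrically and we obtain $R_\mu(s) + R_\nu(s) = R_{\mu \boxplus \nu}(s)$, as desired. The only subtlety is ensuring that $s$ ranges over an open set on which all three inverse Cauchy transforms are well-defined; this is a local inverse function theorem issue and follows from the standing hypothesis $z > E_\mu^+ + E_\nu^+$ (where $G_\mu, G_\nu, G_{\mu \boxplus \nu}$ are analytic and strictly decreasing to zero), so the identity of analytic functions $R_{\mu \boxplus \nu} = R_\mu + R_\nu$ holds on a neighbourhood of $s=0$ by analytic continuation. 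I do not anticipate a genuine obstacle here — this is essentially a bookkeeping calculation once Theorem \ref{thm:addmult} is in hand.
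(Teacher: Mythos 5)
Your proof is correct and is essentially the same as the paper's: both use Theorem \ref{thm:addmult} to identify $\omega = 1/G_{\mu\boxplus\nu}(z)$, rewrite $\omega_\mu, \omega_\nu$ via the defining relation \eqref{eq:Rdef} together with \eqref{eq:add2}, and substitute into \eqref{eq:add1} so that the $1/s$ terms cancel. The paper phrases this by plugging $\omega_\mu$ in for $z$ in \eqref{eq:Rdef} rather than by writing $\omega_\mu = G_\mu^{-1}(s)$ explicitly, but this is the same bookkeeping in a slightly different order.
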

\begin{proof}
Recall the statement of Theorem \ref{thm:addmult}. Substituting $\omega_\mu$ for $z$ in \eqref{eq:Rdef} we have $\omega_\mu = 1/G_\mu(\omega_\mu) + R_\mu(G_\mu(\omega_\mu))$. Now using \eqref{eq:add2} we obtain 
\begin{align*}
\omega_\mu = \omega + R_\mu(1/\omega).
\end{align*}
By finding an equivalent equation for $\nu$ rather than $\mu$, and adding the equations together, we obtain
\begin{align*}
\omega_\mu +\omega_\nu = 2 \omega + R_\mu(1/\omega) + R_\nu(1/\omega).
\end{align*}
Using \eqref{eq:add1} this reduces to
\begin{align*}
z =  \omega + R_\mu(1/\omega) + R_\nu(1/\omega).
\end{align*}
Finally, by \eqref{eq:nattan0} we obtain
\begin{align*}
z =  \frac{1}{G_{\mu \boxplus \nu}(z)}  + R_\mu(G_{\mu \boxplus \nu}(z)) + R_\nu(G_{\mu \boxplus \nu}(z)).
\end{align*}
Using \eqref{eq:Rdef}, and setting $s = G_{\mu \boxplus \nu}(z)$, we see that $R_\mu(s) + R_\nu(s) = R_{\mu \boxplus \nu}(s)$, completing the proof.
\end{proof}

\subsection{$S$-transform}
Now we consider free multiplicative convolution. For simplicity, we assume that the involved measures have non-zero mean. The zero-mean case may be handled using the methods of in \cite{RSp}.

Given a probability measure $\mu$, for sufficiently small $z$ define 
\begin{align*}
\psi_\mu(z) := \int_{-\infty}^\infty \sum_{n \geq 1} (zx)^n \mu(\mathrm{d}x).
\end{align*}
It is easily verified that
\begin{align*}
\psi_\mu(z) = \frac{1}{z} G_\mu(1/z)-1.
\end{align*}
Let $\chi_\mu(z)$ denote the inverse function of $\psi_\mu(z)$. The \textbf{$S$-transform} of the measure $\mu$ is the function
\begin{align} \label{eq:Sdef}
S_\mu(z) := \frac{1+z}{z} \chi_\mu(z).
\end{align}
We now use Theorem \ref{thm:addmult} to give a formal proof of the following standard result relating the $S$-transform of the multiplicative free convolution of measures $\mu$ and $\nu$ to the respective $S$-transforms of $\mu$ and $\nu$.

\begin{thm}[\cite{Vcon2}]
We have
\begin{align} \label{eq:Smult}
S_{\mu \boxtimes \nu}(z) = S_\mu(z) S_\nu(z).
\end{align}
\end{thm}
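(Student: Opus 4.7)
The plan is to mirror the proof of the $R$-transform identity, translating the subordination equations \eqref{eq:mult2}--\eqref{eq:mult1} and the Cauchy-transform formula \eqref{eq:nattan1} from Theorem \ref{thm:addmult} directly into statements about $\psi$ and its inverse $\chi$. First I would introduce the shorthand $s := 1/\omega$, so that $1 + 1/\omega = 1 + s$ and $\omega + 1 = (1+s)/s$. Rewriting \eqref{eq:mult1} gives $\omega_\mu G_\mu(\omega_\mu) = 1 + s$, equivalently $G_\mu(\omega_\mu) = (1+s)/\omega_\mu$. Using the identity $\psi_\mu(z) = \tfrac{1}{z}G_\mu(1/z) - 1$, this reads $\psi_\mu(1/\omega_\mu) = \omega_\mu G_\mu(\omega_\mu) - 1 = s$, hence $\chi_\mu(s) = 1/\omega_\mu$. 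By symmetry $\chi_\nu(s) = 1/\omega_\nu$.

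Next I would feed these identities into \eqref{eq:mult2}: the relation $\omega_\mu \omega_\nu = z(\omega + 1)$ becomes
\begin{equation*}
\frac{1}{\chi_\mu(s)\chi_\nu(s)} = z \cdot \frac{1+s}{s}, \qquad \text{i.e.} \qquad \chi_\mu(s)\chi_\nu(s) = \frac{s}{z(1+s)}.
\end{equation*}
To finish, I would identify $s$ with a value of $\chi_{\mu \boxtimes \nu}$. The formula \eqref{eq:nattan1} gives $G_{\mu \boxtimes \nu}(z) = (\omega+1)/(z\omega) = (1+s)/z$, so
\begin{equation*}
\psi_{\mu \boxtimes \nu}(1/z) = z \cdot G_{\mu \boxtimes \nu}(z) - 1 = s, \qquad \text{hence} \qquad \chi_{\mu \boxtimes \nu}(s) = 1/z.
\end{equation*}

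Combining these ingredients via the definition \eqref{eq:Sdef} of the $S$-transform,
\begin{equation*}
S_\mu(s)\, S_\nu(s) = \left(\frac{1+s}{s}\right)^{2}\! \chi_\mu(s)\chi_\nu(s) = \frac{(1+s)^{2}}{s^{2}} \cdot \frac{s}{z(1+s)} = \frac{1+s}{sz} = \frac{1+s}{s}\,\chi_{\mu \boxtimes \nu}(s) = S_{\mu \boxtimes \nu}(s),
\end{equation*}
which is \eqref{eq:Smult}. There is no genuine obstacle here beyond bookkeeping: everything is algebraic once one spots that $s = 1/\omega$ is the natural variable, because it simultaneously plays the role of $\psi_\mu$ at $1/\omega_\mu$, of $\psi_\nu$ at $1/\omega_\nu$, and of $\psi_{\mu \boxtimes \nu}$ at $1/z$. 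The only subtlety worth flagging is ensuring that $s$ ranges over a neighbourhood of $0$ (so that $\chi_\mu$, $\chi_\nu$ and $\chi_{\mu \boxtimes \nu}$ are all defined at $s$) when $z$ is taken sufficiently large; this is automatic from $s = 1/\omega$ and the large-$z$ behaviour of the subordination functions, since the non-zero mean assumption guarantees the relevant inverse functions exist near the origin.
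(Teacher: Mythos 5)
Your proposal is correct and follows essentially the same route as the paper's proof: both verify the $S$-transform identity at the point $1/\omega$ (your $s$) by using the subordination equations \eqref{eq:mult1}--\eqref{eq:mult2} to show $\chi_\mu(1/\omega)=1/\omega_\mu$, $\chi_\nu(1/\omega)=1/\omega_\nu$, and \eqref{eq:nattan1} to show $\chi_{\mu\boxtimes\nu}(1/\omega)=1/z$, then combine via the definition of $S$. The only cosmetic difference is that the paper introduces the auxiliary function $J_\mu(z)=zG_\mu(z)-1$ (noting $J_\mu(z)=\psi_\mu(1/z)$), whereas you work directly with $\psi$ and $\chi$ in the variable $s=1/\omega$; the underlying computation is identical.
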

\begin{proof}
For probability measures $\mu$, set $J_\mu(z) := zG_\mu(z)-1$. Then $J_\mu(z) = \psi_\mu(1/z)$. Expressed in terms of $J_\mu$, the case $\boxdot = \boxtimes$ of Theorem \ref{thm:addmult} reads as saying
\begin{align} \label{eq:Jeq}
J_{\mu \boxtimes \nu}(z) = 1/\omega,
\end{align}
where, rephrasing \eqref{eq:mult2} and \eqref{eq:mult1} with $J_\mu$ in place of $G_\mu$, we have 
\begin{align}
J_\mu (\omega_\mu)&=J_\nu(\omega_\nu) = 1/\omega \label{eq:mult11}\\
\omega_\mu\omega_\nu&= z(\omega+1)\label{eq:mult12}.
\end{align}
We now compare the quantities $S_\mu (1/\omega) S_\nu(1/\omega)$ and $S_{\mu \boxtimes \nu}(1/\omega)$, and show that they coincide. Considering the first of these quantities, by the definition \eqref{eq:Sdef} we have
\begin{align} \label{eq:mozart}
S_\mu(1/\omega)S_\nu(1/\omega)= (\omega+1)^2 \chi_\mu (1/\omega) \chi_\nu(1/\omega).
\end{align}
Plugging \eqref{eq:mult11} into \eqref{eq:mozart} to obtain the first equality below we have
\begin{align} \label{eq:mozart2}
S_\mu(1/\omega)S_\nu(1/\omega) = (\omega+1)^2 \chi_\mu \left(J_\mu(\omega_\mu) \right)\chi_\nu\left(J_\nu(\omega_\nu) \right) = (\omega+1)^2/\omega_\mu\omega_\nu,
\end{align}
where to obtain the second equality above, we have used the fact that since $J_\mu(z) = \psi_\mu(1/z)$, and $\chi_\mu$ is the inverse function of $\psi_\mu$, we have $\chi_\mu(J_\mu(z)) = 1/z$. 

Using \eqref{eq:mult12}, \eqref{eq:mozart2} subsequently reduces to
\begin{align} \label{eq:mozart3}
S_\mu(1/\omega)S_\nu(1/\omega) = (\omega+1)/z.
\end{align}

We turn to considering $S_{\mu \boxtimes \nu}(1/\omega)$. Using the definition \eqref{eq:Sdef} of the $S$-transform to obtain the first equality below, \eqref{eq:Jeq} to obtain the second, and then again using $\chi_\mu(J_\mu(z)) = 1/z$ to obtain the third, we obtain
\begin{align} \label{eq:mozart4}
S_{\mu \boxtimes \nu}(1/\omega) = (\omega+1)\chi_{\mu \boxtimes \nu}( 1/\omega)  = (\omega+1)\chi_{\mu \boxtimes \nu}\left( J_{\mu \boxtimes \nu}(z) \right) =  (\omega+1)/z.
\end{align}
Comparing \eqref{eq:mozart3} and \eqref{eq:mozart4}, we see that \eqref{eq:Smult} holds with $1/\omega$ in place of $z$. Since $\omega$ varies continuously with $z$, it follows that \eqref{eq:Smult} holds, completing the proof.
\end{proof}

\subsection{Relationship between free additive convolution and compression} 
\label{sec:compr}
Recall the $R$-transform defined in \eqref{eq:Rdef}. We now use Theorem \ref{thm:compression} to give a formal proof of the following standard result concerning the $R$-transform of the compression of a measure.

\begin{thm}[\cite{NS}]
For $\tau \in (0,1]$ we have
\begin{align} \label{eq:symph3}
R_{[\mu]_\tau}(s) = R_\mu(\tau s).
\end{align}
\end{thm}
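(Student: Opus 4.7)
The plan is to derive the identity by direct algebraic manipulation of the subordination equation \eqref{eq:comp1} from Theorem \ref{thm:explicit2} together with the Cauchy transform formula \eqref{eq:compo} from Theorem \ref{thm:compression}. Since both are already established, the proof amounts to careful bookkeeping — there is no genuine obstacle here.

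First, I would set $s = G_{[\mu]_\tau}(z)$, which by the definition \eqref{eq:Rdef} of the $R$-transform gives the target equation $R_{[\mu]_\tau}(s) = z - 1/s$. The goal is to show the right-hand side equals $R_\mu(\tau s) = \omega' - 1/(\tau s)$, where $\omega'$ is chosen so that $G_\mu(\omega') = \tau s$. The natural candidate for $\omega'$ is the function $\omega = \omega(z)$ appearing in Theorem \ref{thm:compression}.

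Next, I would extract two relations from the compression data. By rearranging \eqref{eq:compo},
\begin{equation*}
\omega - z = \frac{1-\tau}{\tau s}.
\end{equation*}
Plugging this into the subordination equation \eqref{eq:comp1}, $1 = \frac{\omega - z}{1-\tau} G_\mu(\omega)$, yields
\begin{equation*}
G_\mu(\omega) = \tau s,
\end{equation*}
which is precisely the identity needed to identify $\omega = \omega'$ and gives $R_\mu(\tau s) = \omega - 1/(\tau s)$ from the definition \eqref{eq:Rdef}.

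Finally, I would conclude by combining these two pieces:
\begin{equation*}
R_{[\mu]_\tau}(s) = z - \frac{1}{s} = \left( \omega - \frac{1-\tau}{\tau s} \right) - \frac{1}{s} = \omega - \frac{(1-\tau) + \tau}{\tau s} = \omega - \frac{1}{\tau s} = R_\mu(\tau s).
\end{equation*}
This completes the proof. The only technical remark worth recording is that $\omega(z)$ depends continuously on $z$ for $z > E_\mu^+$, so as $z$ ranges through the relevant domain, $s = G_{[\mu]_\tau}(z)$ covers a neighbourhood of $0$ in which both $R_{[\mu]_\tau}(s)$ and $R_\mu(\tau s)$ are defined, justifying \eqref{eq:symph3} as an identity of analytic functions.
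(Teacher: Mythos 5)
Your proof is correct and takes essentially the same approach as the paper: both arguments combine the Cauchy transform formula \eqref{eq:compo} with the subordination equation \eqref{eq:comp1}, plug into the definition \eqref{eq:Rdef} of the $R$-transform, and compare. The only cosmetic difference is that you parametrise by $s = G_{[\mu]_\tau}(z)$ from the outset, whereas the paper keeps things in terms of $\omega$ and $z$ and substitutes $s = \frac{1}{\tau}\frac{1-\tau}{\omega - z}$ only at the final step.
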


\begin{proof}
Noting that $[\mu]_1 = \mu$, throughout this proof we will use $G_\tau$ and $G_1$ as shorthand for $G_{[\mu]_\tau}$ and $G_\mu$, and do similarly with $R_\tau$ and $R_1$. According to Theorem \ref{thm:compression}, 
\begin{align} \label{eq:miral}
G_\tau(z) = \frac{1}{\tau} \frac{1 - \tau}{\omega - z},
\end{align}
where $\omega$ is the solution to the equation $G_1(\omega) = \frac{1-\tau}{\omega-z}$. 

Now, on the one hand, according to the definition of the $R$-transform of $[\mu]_\tau$ we have 
\begin{align} \label{eq:miral2}
R_\tau(G_\tau(z))=z-\frac{1}{G_\tau(z)}.
\end{align}
Using \eqref{eq:miral}, \eqref{eq:miral2} reads
\begin{align} \label{eq:symph}
R_\tau \left( \frac{1}{\tau} \frac{1-\tau}{\omega-z} \right) = ( z - \tau \omega)/(1-\tau).
\end{align}
On the other hand, substituting $\omega$ for $z$ in the definition of the $R$-transform of $\mu$, we have
\begin{align} \label{eq:miral3}
R_1 ( G_1( \omega )) = \omega - \frac{1}{G_1(\omega)}.
\end{align}
Using $G_1(\omega) = (1-\tau)/(\omega-z)$, \eqref{eq:miral3} reads
\begin{align} \label{eq:symph2}
R_1 \left( \frac{1-\tau}{\omega-z} \right) = ( z - \tau \omega)/(1-\tau).
\end{align}
Comparing \eqref{eq:symph} and \eqref{eq:symph2}, and substituting $s =  \frac{1}{\tau} \frac{1-\tau}{\omega-z} $ we obtain \eqref{eq:symph3}.
\end{proof}

Given a probability measure $\mu$, write $\lambda_* \mu$ for the pushforward measure under the map $x \mapsto \lambda x$. In other words, if $X$ has law $\mu$, $\lambda X$ has law $\lambda_*\mu$. It is easily verifed that $G_{\lambda_* \mu}(z) = \frac{1}{\lambda}G_\mu(z/\lambda)$, and subsequently that $R_{\lambda_* \mu}(s) = \lambda R_\mu(\lambda s)$. It follows in particular that
\begin{align*}
R_{[\mu]_\tau}(s) = \frac{1}{\tau} R_{\tau_* \mu}(s).
\end{align*}
Recall now that $R_{\mu \boxplus \nu}(s) = R_\mu(s) +R_\nu(s)$. It follows that for integers $k \geq 1$ we have $R_{\mu^{\boxplus k}}(s) = kR_\mu(s)$, where $\mu^{\boxplus k} := \mu \boxplus \ldots \boxplus \mu$ denotes the $k$-fold additive free convolution of $\mu$ with itself. Thus, setting $\tau = 1/k$, it follows that
\begin{align*}
R_{[\mu]_{1/k}}(s) = k R_{\tau_* \mu}(s) = R_{ (\tau_* \mu)^{\boxplus k} } (s) = R_{ \tau_* \mu^{\boxplus k}  } (s).
\end{align*}
By the uniqueness of $R$-transforms, we have arrived at the following known result (see e.g. \cite{ST}).

\begin{thm}
For integers $k\geq 1$, we have the following identity in law relating free compression with additive free convolution
\begin{align} \label{eq:compadd}
[\mu]_{1/k} = (1/k)_* \mu^{\boxplus k} .
\end{align}
\end{thm}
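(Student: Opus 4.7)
The plan is to assemble the identity purely at the level of $R$-transforms, and then appeal to the uniqueness of the $R$-transform (equivalently, of the Cauchy transform) to conclude equality of measures. Every ingredient needed has already been established in the paragraph immediately preceding the statement, so the proof is essentially a bookkeeping argument on three identities: the free compression $R$-transform formula \eqref{eq:symph3}, the additivity $R_{\mu \boxplus \nu}(s) = R_\mu(s) + R_\nu(s)$, and the scaling rule $R_{\lambda_*\mu}(s) = \lambda R_\mu(\lambda s)$.

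First I would record the scaling rule for the $R$-transform as a small lemma: writing $\lambda_*\mu$ for the pushforward of $\mu$ under $x \mapsto \lambda x$, a change of variables yields $G_{\lambda_*\mu}(z) = \lambda^{-1} G_\mu(z/\lambda)$, and substituting this into the defining relation \eqref{eq:Rdef} for the $R$-transform gives $R_{\lambda_*\mu}(s) = \lambda R_\mu(\lambda s)$. Then, starting from the additivity of $R$-transforms, induction on $k$ gives $R_{\mu^{\boxplus k}}(s) = k R_\mu(s)$. Combining these with $\lambda = 1/k$ produces
\begin{equation*}
R_{(1/k)_* \mu^{\boxplus k}}(s) = \tfrac{1}{k}\, R_{\mu^{\boxplus k}}(s/k) = \tfrac{1}{k} \cdot k\, R_\mu(s/k) = R_\mu(s/k).
\end{equation*}

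On the other hand, the compression identity \eqref{eq:symph3} applied at $\tau = 1/k$ reads $R_{[\mu]_{1/k}}(s) = R_\mu(s/k)$. Comparing these two expressions gives $R_{[\mu]_{1/k}}(s) = R_{(1/k)_* \mu^{\boxplus k}}(s)$ on the common domain of definition. Since a compactly supported probability measure on $\mathbb{R}$ is determined by its $R$-transform (both measures in question have compact support, as $\mu$ does, and compression and additive free convolution preserve compactness of support), we conclude $[\mu]_{1/k} = (1/k)_* \mu^{\boxplus k}$.

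There is essentially no obstacle here: the content of the theorem is packaged into \eqref{eq:symph3} (which in turn comes from Theorem \ref{thm:compression} and the subordination equation \eqref{eq:comp1}), and what remains is the bookkeeping of scaling factors. The only point that warrants a word of care is justifying that equality of $R$-transforms forces equality of measures; this follows from the fact that $R_\pi$ determines $G_\pi$ through \eqref{eq:Rdef} on a neighbourhood of infinity, and $G_\pi$ in turn determines $\pi$ by Stieltjes inversion.
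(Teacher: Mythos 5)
Your proof is correct and follows essentially the same route as the paper: both compute the $R$-transforms of $[\mu]_{1/k}$ and $(1/k)_* \mu^{\boxplus k}$ using \eqref{eq:symph3}, the scaling rule $R_{\lambda_*\mu}(s) = \lambda R_\mu(\lambda s)$, and the additivity $R_{\mu^{\boxplus k}}(s) = k R_\mu(s)$, and then invoke uniqueness of the $R$-transform. The only cosmetic difference is that the paper routes through the intermediate identity $R_{[\mu]_\tau}(s) = \tfrac{1}{\tau} R_{\tau_*\mu}(s)$ and a chain of rewritings, whereas you evaluate both $R$-transforms directly and compare.
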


With the $[\mu]_\tau$ defined for all $\tau \in (0,1]$, one can then take use \eqref{eq:compadd} to define the fractional additive free convolution, valid for all real $k \geq 1$, by setting $\mu^{ \boxplus k} := k_* [\mu]_{1/k}$; see e.g.\ \cite{BV} or \cite{NS}. For real $k \geq 1$, it is also natural to consider the rescaling $\sqrt{k}_* [\mu]_{1/k} = (1/\sqrt{k})_* \mu^{\boxplus k}$, as for any $k \geq 1$ this measure has the same variance as $\mu$. As mentioned in the introduction, Shlyakhtenko and Tao \cite{ST} recently showed that the free entropy $\chi((1/\sqrt{k})_* \mu^{\boxplus k})$ is monotone increasing in $k$.

\subsection{Proof of Proposition \ref{prop:cauchy}} \label{sec:cauchy}
We close this section by proving Proposition \ref{prop:cauchy} using various results from the last two sections, as well as elements of their proofs. We stress that unlike the prior results we have proved in Section \ref{sec:free}, Proposition \ref{prop:cauchy} is new.

\begin{proof}[Proof of Proposition \ref{prop:cauchy}]
First we prove \eqref{eq:max1b} and \eqref{eq:max2b}. By \eqref{eq:wien} and \eqref{eq:claw3} we have
\begin{align*}
\frac{ \mathrm{d} \Pi_{*,z}}{ \mathrm{d}(\mu \otimes \nu)} = \frac{ \omega (z- X \odot Y) }{ (\omega_\mu - X)(\omega_\nu - Y)},
\end{align*}
with $\omega,\omega_\mu,\omega_\nu$ satisfying either \eqref{eq:add1} and \eqref{eq:add2} if $\odot = +$ or \eqref{eq:mult2} and \eqref{eq:mult1} if $\odot = \times$. 
Thus
\begin{align*}
\mathbf{E}_{\Pi_{*,z}} \left[ \frac{1}{z-X\odot Y} \right] = \mathbf{E}_{\mu \otimes \nu} \left[ \frac{ \omega }{ (\omega_\mu - X)(\omega_\nu - Y)} \right] = \omega G_\mu(\omega_\mu) G_\nu(\omega_\nu),
\end{align*}
where to obtain the final equality above, we have simply used the fact that $X$ and $Y$ are independent under $\mu \otimes \nu$. 
If $\odot = +$, then using \eqref{eq:add2} to obtain the second equality below, and \eqref{eq:nattan0} to obtain the third, we have
\begin{align*}
\mathbf{E}_{\Pi_{*,z}} \left[ \frac{1}{z-( X+Y)} \right] = \omega G_\mu(\omega_\mu) G_\nu(\omega_\nu) = \frac{1}{\omega} = G_{\mu \boxplus \nu}(z),
\end{align*}
proving \eqref{eq:max1b}.

If $\odot = \times$, then using \eqref{eq:mult2} and \eqref{eq:mult1} to obtain the second equality below, and \eqref{eq:nattan1} to obtain the third, we have
\begin{align*}
\mathbf{E}_{\Pi_{*,z}} \left[ \frac{1}{z-XY} \right] = \omega G_\mu(\omega_\mu) G_\nu(\omega_\nu) = \frac{\omega+1}{z\omega} = G_{\mu \boxtimes \nu}(z).
\end{align*}
That proves \eqref{eq:max2b}.

It remains to prove \eqref{eq:max3b}. By \eqref{eq:proko}, \eqref{eq:lobster2} and \eqref{eq:eye} we have
\begin{align} \label{eq:proko2}
\frac{ \mathrm{d} \Pi_{*,z}}{ \mathrm{d}(\mu \otimes \nu_{\mathrm{uni}})} = \left( \frac{1}{\tau}\mathrm{1}_{Y > 1- \tau} + \frac{\omega-z}{1-\tau}\mathrm{1}_{Y \leq 1-\tau} \right) \frac{(z-X)^{\mathrm{1}_{\{Y > 1- \tau\}} }}{\omega - X} .
\end{align}
Using \eqref{eq:proko2} to obtain the first equality below, the fact that $X$ and $Y$ are independent with laws $\mu$ and $\nu_{\mathrm{uni}}$ (i.e. uniform distribution on $[0,1]$) under $\mathbf{E}_{\mu \otimes \nu_{\mathrm{uni}}}$ to obtain the second, \eqref{eq:comp1} to obtain the third, and \eqref{eq:compo} to obtain the fourth, we have 
\begin{align*}
\mathbf{E}_{\Pi_{*,z}} \left[ \frac{1}{z-X} \mathrm{1}_{Y > 1- \tau} \right] = \frac{1}{\tau} \mathbf{E}_{\mu \otimes \nu_{\mathrm{uni}}} \left[  \frac{1}{\omega-X} \mathrm{1}_{Y > 1-\tau} \right] = G_\mu(\omega) = \frac{1-\tau}{\omega-z} = \tau G_{[\mu]_\tau}(z),
\end{align*}
which is precisely \eqref{eq:max3b}, completing the proof.
\end{proof}

\appendix

\section{Sketch proof of Lemma \ref{lem:bgn}}

We begin in this section by giving a sketch proof of Lemma \ref{lem:bgn}. We will first recast the lemma in a general form in the language of copulas.

\begin{lemma} \label{lem:bgn2}
Let $w:[0,1]^2 \to \mathbb{R}$ be measurable and bounded. Then there is a unique copula $\gamma:[0,1]^2 \to [0,\infty)$ maximising the functional
\begin{align*}
\mathcal{G}[w,\gamma] := \int_{[0,1]^2}( w(s,t) - \log \gamma(s,t)) \gamma(s,t) \mathrm{d}s \mathrm{d}t .
\end{align*}
The maximising copula is the unique copula taking the form 
\begin{align} \label{eq:cyclic}
\gamma_*(s,t) =\alpha(s)\beta(t) e^{w(s,t)}.
\end{align}
\end{lemma}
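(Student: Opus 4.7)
The plan is to prove Lemma \ref{lem:bgn2} in three stages: establish existence and uniqueness of the maximiser via the direct method together with strict concavity, then identify its form via a first-order variation, and finally use a relative-entropy identity as a clean converse that also pins down uniqueness within the exponential family.

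For existence, I would first observe that since $w$ is bounded and $H(\gamma) \geq 0$ on copulas (by Jensen's inequality applied to $u \log u$ together with the fact that $\gamma$ integrates to one), the functional $\mathcal{G}[w, \cdot]$ is bounded above by $\|w\|_\infty$, while plugging in the uniform density $\gamma_0 \equiv 1$ supplies the lower bound $\mathcal{G}[w,\gamma_0] \geq -\|w\|_\infty$. Given a maximising sequence $(\gamma_n)$, the bound on $H(\gamma_n)$ combined with the de la Vall\'ee--Poussin criterion yields uniform integrability against Lebesgue measure on $[0,1]^2$, hence relative weak compactness in $\sigma(L^1, L^\infty)$. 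Any subsequential weak limit $\gamma_*$ is still a copula because the marginal constraints \eqref{eq:jetski} are linear in $L^\infty$; the map $\gamma \mapsto \int w\, d\gamma$ is $\sigma(L^1, L^\infty)$-continuous since $w \in L^\infty$, while $\gamma \mapsto H(\gamma)$ is lower semicontinuous via Fatou and the convexity of $u \log u$. Hence $\mathcal{G}[w,\cdot]$ is upper semicontinuous and $\gamma_*$ attains the supremum. Uniqueness then follows from strict convexity of $u \log u$, which makes $H$ strictly convex on the convex set of copulas, and so $\mathcal{G}[w, \cdot]$ strictly concave.

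Next, to identify the form of $\gamma_*$, I would perturb: for any bounded measurable $\eta(s,t)$ with $\int_0^1 \eta(s,t)\, dt = 0 = \int_0^1 \eta(s,t)\, ds$, the perturbation $\gamma_* + \varepsilon \eta$ remains a copula on $\{\gamma_* > 0\}$ for $\varepsilon$ small. Differentiating $\mathcal{G}[w, \gamma_* + \varepsilon\eta]$ at $\varepsilon = 0$ produces
\begin{equation*}
\int_{[0,1]^2} \eta(s,t) \bigl( w(s,t) - \log \gamma_*(s,t) - 1 \bigr)\, ds\, dt = 0.
\end{equation*}
Since this vanishes for every test function with zero marginals, a standard $L^1$ duality argument forces $w(s,t) - \log \gamma_*(s,t)$ to be of the form $u(s) + v(t)$ almost everywhere on the support of $\gamma_*$, and exponentiating gives $\gamma_*(s,t) = \alpha(s)\beta(t) e^{w(s,t)}$ with $\alpha = e^{-u}$, $\beta = e^{-1-v}$.

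Finally, I would close the argument by a converse identity that simultaneously delivers both directions of uniqueness. If $\gamma$ is any copula of the form $\alpha(s)\beta(t) e^{w(s,t)}$ and $\gamma'$ is any other copula, then $\log \gamma(s,t) = \log \alpha(s) + \log \beta(t) + w(s,t)$, and substituting this into the expansion of $\mathcal{G}[w,\gamma] - \mathcal{G}[w,\gamma']$ causes the $\log \alpha(s) + \log\beta(t)$ contributions to cancel using the marginal constraints on both $\gamma$ and $\gamma'$. What remains is exactly $H(\gamma'|\gamma) \geq 0$, with equality iff $\gamma' = \gamma$. This shows in one stroke that every copula of the claimed form is the unique maximiser, and in particular that at most one such copula exists. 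The main technical hurdle in this plan is the perturbation step, since one must handle the possibility that $\gamma_*$ vanishes on a set of positive measure and extend the additive splitting off its support; however, the relative-entropy identity in the final step largely bypasses this difficulty, because once any one copula of the exponential form is produced (constructable by a Sinkhorn-style fixed-point iteration on the marginals $\alpha$, $\beta$), the identity finishes everything without further regularity analysis.
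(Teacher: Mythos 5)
Your proposal is correct, and it is more rigorous than the paper's own sketch while taking a genuinely different route on the crucial step. The paper's appendix argument is a pure stationarity calculation: it perturbs the (assumed-to-exist) maximiser by explicit combinations of four smooth bump functions at the corners of a rectangle, derives the cyclic identity \eqref{eq:Glin} for $G = w - \log\gamma_*$, and then \emph{differentiates} $G$ in each variable to obtain the additive splitting $G(s,t)=A(s)+B(t)$ — a step that tacitly assumes regularity of $\gamma_*$ that is never established, and the paper moreover disposes of existence of $\gamma_*$ with a one-line appeal to strict concavity, which alone does not give existence. You fill both gaps: existence via the direct method (bounded entropy implies uniform integrability by de la Vall\'ee--Poussin, hence $\sigma(L^1,L^\infty)$ compactness; linearity of the marginal constraints and lower semicontinuity of $H$ pass to the limit), and, more importantly, the relative-entropy identity $\mathcal{G}[w,\gamma]-\mathcal{G}[w,\gamma'] = H(\gamma'|\gamma)$ for any copula $\gamma=\alpha\beta e^w$ and any competitor copula $\gamma'$, which the paper does not use. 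That identity (whose verification uses exactly the cancellation of $\log\alpha(s)+\log\beta(t)$ under the shared marginal constraints) simultaneously proves that an exponential-form copula is the unique global maximiser \emph{and} that at most one such copula exists, entirely sidestepping the smoothness problems in the Euler--Lagrange step. The only remaining obligation in your plan — producing one copula of the exponential form — is correctly delegated to the Schr\"odinger/Sinkhorn fixed point (which is exactly the content of the cited Bernton--Ghosal--Nutz result that Lemma \ref{lem:bgn} attributes to them). In short: the paper proves the factorised form is \emph{necessary} for a stationary point; you additionally prove it is \emph{sufficient} for global optimality via the entropy identity, which is the cleaner and more complete argument.
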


\begin{proof}
We note that since $\mathcal{G}[w,\gamma]$ is strictly concave in the $\gamma$ variable, there exists a unique maximum $\gamma_*$. We now show that any stationary point (in the $\gamma$ variable) of functional $\mathcal{G}[w,\gamma]$ takes the form \eqref{eq:cyclic}.

Let $\gamma_*$ be the maximiser of the energy functional $\mathcal{G}[w,\gamma] $. For suitable $\psi$, we will use the stationarity $\frac{\mathrm{d}}{d \varepsilon} \mathcal{G}[w,\gamma_*+\varepsilon \psi] = 0$ to deduce the form taken by $\gamma_*$. 

Let $\gamma$ be a copula. Then in order for a perturbation $\gamma+\varepsilon \psi$ to also be a copula, it must be the case that 
\begin{align}\label{eq:cons3}
0 = \int_0^1 \psi(s,t_0)\mathrm{d}s = \int_0^1 \psi(s_0,t)\mathrm{d}t
\end{align}
for all $s_0,t_0 \in [0,1]$. 

With this picture in mind, we will choose points $s_1,s_2,t_1,t_2$ of $(0,1)$, and consider functions $\psi$ that are positive at $(s_1,t_1)$ and $(s_2,t_2)$, and negative at $(s_1,t_2)$ and $(s_2,t_1)$ in such a way that \eqref{eq:cons3} holds. More specifically, let $\eta_0:\mathbb{R}^2 \to [0,\infty)$ be a bump function of radius centered at zero, i.e.\ a smooth bounded function satisfying $\eta_0(s,t)=0$, whenever $s^2 + t^2 \geq \delta$ for some small $\delta$, and with the property $\int_{\mathbb{R}^2} \eta_0(s,t)\mathrm{d}s\mathrm{d}t = 1$. For $(s,t) \in (0,1)$, write $\eta_{(s,t)}$ for $\eta_0$ recentered at $(s,t)$, i.e.\ $\eta_{(s,t)}(s',t') := \eta_{0}(s'-s,t'-t)$. We assume that $\delta$ is taken sufficiently small such that each $\eta_{s_i,t_j}$ is supported in $[0,1]^2$. Define 
\begin{align} \label{eq:testf}
\psi := \eta_{s_1,t_1} + \eta_{s_2,t_2} - \eta_{s_2,t_1} - \eta_{s_1,t_2}.
\end{align}
Then $\psi$ satisfies \eqref{eq:cons3}, and hence $\gamma_* + \varepsilon \psi$ is a copula.  


We now study the small-$\varepsilon$ asymptotics of $\mathcal{G}[w,\gamma_*+\varepsilon \psi]$. 
We have
\begin{align*}
\mathcal{G}[w,\gamma_*+ \varepsilon \psi] &= \int_{[0,1]^2} (w - \log( \gamma_*+\varepsilon \psi )) ( \gamma_*+\varepsilon \psi ) \\
&= \mathcal{G}[w,\gamma_*] - \int_{[0,1]^2} \log(1 + \varepsilon \psi/\gamma_*) (\gamma_* + \varepsilon \psi) + \varepsilon \int_{[0,1]^2} (w - \log(\gamma_* + \varepsilon \psi)) \psi\\
& = \mathcal{G}[w,\gamma_*] - \varepsilon \int_{[0,1]^2} \psi + \varepsilon \int_{[0,1]^2} (w - \log(\gamma_*)) \psi  + O(\varepsilon^2)\\
& = \mathcal{G}[w,\gamma_*] + \varepsilon \int_{[0,1]^2} (w - \log(\gamma_*)) \psi  + O(\varepsilon^2),
\end{align*}
where the last equality above follows from the fact that $\psi$ integrates to zero. 

It follows that if $\gamma_*$ is a stationary point of the functional $\mathcal{G}[w,\gamma_*]$, it must be the case that 
\begin{align*}
\int_{[0,1]^2} \psi ( w - \log (\gamma_*))  = 0 \qquad \text{for every test function $\psi$ of the form \eqref{eq:testf}}.
\end{align*} 
In order for the last line to hold for arbitrary $\psi$ of the form set out in \eqref{eq:testf}, it must be the case that, setting $G(s,t) := w(s,t) - \log \gamma_*(s,t)$, we have
\begin{align} \label{eq:Glin}
G(s_1,t_1 ) + G(s_2,t_2 ) - G(s_1,t_2) - G(s_2,t_1)= 0 \qquad \text{ for all $s_1,s_2,t_1,t_2$ in $[0,1]$}.
\end{align}
We now show that the this implies that $G(s,t)$ is a sum of autonomous functions of $s$ and $t$. To see this, differentiating \eqref{eq:Glin} with respect to $s_1$ we see that 
\begin{align*}
\frac{\partial G}{\partial s}(s_1,t_1) = \frac{\partial G}{\partial s}(s_1,t_2)  \qquad \text{ for all $t_1,t_2$ in $[0,1]$}. 
\end{align*}
In other words, $\frac{\partial G}{\partial s}(s,t) = \tilde{A}(s)$ for some function $\tilde{A}(s)$ not depending on $t$. Likewise $\frac{\partial G}{\partial t}(s,t) = \tilde{B}(t)$. It follows that there are functions $A(s)$ and $B(t)$ (with respective derivatives $\tilde{A}(s)$ and $\tilde{B}(t)$) such that
\begin{align*}
G(s,t) = A(s) + B(t).
\end{align*}
Using $G := w- \log \gamma_*$, this implies that $\gamma_*$ takes the form
\begin{align} \label{eq:fform}
\gamma_*(s,t) = A(s)B(t)e^{w(s,t)},
\end{align} 
as required. 

\end{proof}

\begin{proof}[Proof of Lemma \ref{lem:bgn}]
To complete the sketch proof of Lemma \ref{lem:bgn} using Lemma \ref{lem:bgn2}, set $w(s,t)\\
:= c(T_\mu(s),T_\nu(t))$ and use Lemma \ref{lem:flat}.
\end{proof}

\section{Outline of Theorem \ref{thm:KKRW} proof} \label{sec:appkkrw}

The proof of Theorem \ref{thm:KKRW} in the general $d \geq 2$ case is almost identical to that of $d = 2$ case, which is given in the appendix of \cite{KKRW}. In this brief appendix we outline the key computational aspect of the proof for general $d \geq 2$. 

The pivotal step we provide here is an entropy calculation estimating the number of $d$-tuples of permutations with a certain property. This is equivalent to appraising the probability that the random measure $\gamma_N$ defined in \eqref{eq:gamma} takes certain values when evaluated on a mesh of the unit cube $[0,1]^d$. In this direction, let $m \geq 1$ be an integer, and for the sake of simplicity we will consider the case where $N = N'm$ is an integer multiple of $m$. Let $[m] = \{1,\ldots,m\}$. Consider now the decomposition
\begin{align*}
(0,1]^d = \bigcup_{\mathbf{r} \in [m]^d } Q_{\bm r} ~\text{ where   $Q_{\bm r} := Q_{r_1,\ldots,r_d} := \left(\frac{r_1-1}{m},\frac{r_1}{m}\right] \times \ldots \times \left(\frac{r_d-1}{m},\frac{r_d}{m}\right]$}.
\end{align*}
Given a copula $\gamma$, write
\begin{align*}
\bar{\gamma}_{\bm r} := m^d \int_{Q_{\bm r}} \gamma(\mathrm{d}s)
\end{align*}
for the average of $\gamma$ on $Q_{\bm r}$. In particular, if $\gamma^{\bm \sigma_N}$ is the copula associated with a $d$-tuple $\bm \sigma_N = (\sigma_{N,1},\ldots,\sigma_{N,d})$ of permutations of $\{1,\ldots,N\}$, define the integer
\begin{align*}
N_{\bm r}^{\bm \sigma} :=  Nm^{-d} \gamma^{\bm \sigma}_{\bm r} = \# \{ 1 \leq i \leq N : \forall 1 \leq j \leq d, (r_j - 1)N < \sigma_{N,j}(i) \leq r_j N \}.
\end{align*}
We have the following lemma, whose proof is a combinatorial exercise which we leave to the reader.
\begin{lemma} \label{lem:comb}
Let $\gamma$ be a fixed copula that is constant on each $Q_{\bm r}$, and write $\gamma_r$ for the value of $\gamma$ on $Q_{\bm r}$. Suppose each $N_{\bm r} := Nm^{-d}\gamma_{\bm r}$ is an integer. Then 
\begin{align*}
\# \{  \bm \sigma_N := (\sigma_{N,1},\ldots,\sigma_{N,d}) \in \mathcal{S}_N^d : N_{\bm r}^{\bm \sigma} = N_{\bm r} ~ \forall \bm r \in [m]^d  \} = \frac{ N! ((N/m)!)^{md} }{ \prod_{ \mathbf{r} \in [m]^d } N_{\bm r}! } .
\end{align*}
\end{lemma}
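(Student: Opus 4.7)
The plan is to partition the set of admissible $d$-tuples $\bm \sigma_N$ according to an auxiliary coloring of $\{1,\ldots,N\}$, count the colorings, and then for each coloring count the compatible $\bm \sigma_N$.

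First, to each $\bm \sigma_N = (\sigma_{N,1},\ldots,\sigma_{N,d}) \in \mathcal{S}_N^d$ I would associate the coloring $c_{\bm \sigma}:\{1,\ldots,N\} \to [m]^d$ defined by $c_{\bm \sigma}(i) = \bm r$ if and only if $(r_j-1)N/m < \sigma_{N,j}(i) \leq r_j N/m$ for every $j=1,\ldots,d$. Under this correspondence, the condition $N_{\bm r}^{\bm \sigma} = N_{\bm r}$ for every $\bm r \in [m]^d$ is equivalent to the coloring $c_{\bm \sigma}$ having fiber sizes $\#c_{\bm \sigma}^{-1}(\bm r) = N_{\bm r}$. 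Thus the enumeration splits into counting such colorings and, for each coloring $c$ with the prescribed fiber sizes, counting the number of $\bm \sigma_N$ satisfying $c_{\bm \sigma} = c$.

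Second, the number of colorings $c:\{1,\ldots,N\} \to [m]^d$ with $\#c^{-1}(\bm r) = N_{\bm r}$ for every $\bm r$ is a standard multinomial count, equal to $N!/\prod_{\bm r \in [m]^d} N_{\bm r}!$. Third, I would fix such a coloring $c$ and count the compatible tuples. For each $j \in \{1,\ldots,d\}$ and $r \in [m]$, the restriction of $\sigma_{N,j}$ to the set $F_{j,r} := \{i : c(i)_j = r\}$ must be a bijection onto the block $B_{j,r} := \{(r-1)N/m+1,\ldots,rN/m\}$. Here is where the copula hypothesis enters: since $\gamma$ has unit marginals,
\begin{equation*}
\sum_{\bm r \in [m]^d : r_j = r} \gamma_{\bm r} = m^{d-1} \qquad \text{for every } j, r,
\end{equation*}
so $|F_{j,r}| = \sum_{\bm r : r_j = r} N_{\bm r} = N m^{-d} \cdot m^{d-1} = N/m = |B_{j,r}|$, and bijections therefore exist. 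Independently across $j \in \{1,\ldots,d\}$ and $r \in [m]$, there are $(N/m)!$ such bijections, and the choices for distinct $(j,r)$ can be made independently since the $F_{j,r}$ for different $r$ partition $\{1,\ldots,N\}$ (for each fixed $j$). This yields $((N/m)!)^{md}$ tuples per coloring. Multiplying gives the claimed formula.

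The main (very modest) obstacle is the verification that the copula property forces the fiber sizes $|F_{j,r}|$ to match the block sizes $|B_{j,r}| = N/m$; without this matching, no compatible $\bm \sigma_N$ exists and the count collapses. Once that marginal identity is noted, the rest is bookkeeping via the multinomial coefficient and the product of block permutation counts.
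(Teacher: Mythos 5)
Your proof is correct, and since the paper declares this lemma ``a combinatorial exercise which we leave to the reader,'' your argument fills in the intended gap rather than competing with a written proof. The decomposition into colorings of $\{1,\ldots,N\}$ by $[m]^d$ with prescribed fiber sizes, followed by counting block bijections $F_{j,r}\to B_{j,r}$, is the natural one, and you correctly identify the only point requiring the copula hypothesis: the marginal constraint forces $|F_{j,r}| = \sum_{\bm r':\, r'_j = r} N_{\bm r'} = N/m$, matching the block size so that the $(N/m)!$ bijections exist and the count does not vanish. (You have also silently corrected the paper's typo in the definition of $N_{\bm r}^{\bm\sigma}$, where $\sigma_{N,j}(i)\leq r_jN$ should read $\sigma_{N,j}(i)\leq r_jN/m$; this is clearly the intended definition given \eqref{eq:gamma}.)
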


Note from Lemma \ref{lem:comb}, given a copula $\gamma_r$ such that each $N_{\bm r} :=  Nm^{-d}\gamma_{\bm r}$ is an integer we have
\begin{align*}
\frac{1}{N} \log \mathbf{P} ( \bar{\gamma}^{\bm \sigma_N}_{\bm r} = \gamma_r ) = \frac{1}{N} \log \left( \frac{1}{N!^d} \frac{ N! ((N/m)!)^{md} }{ \prod_{ \mathbf{r} \in [m]^d } N_{\bm r}! }  \right).
\end{align*}
Using Stirling's formula we have
\begin{align*}
 \log \left( \frac{1}{N!^d} \frac{ N! ((N/m)!)^{md} }{ \prod_{ \mathbf{r} \in [m]^d } N_{\bm r}! }  \right) &= - (d-1)N(\log N - 1)+ md \frac{N}{m} ( \log N - \log m - 1 ) \\
&- \sum_{ \bm r \in [m]^d} N m^{-d} \gamma_{\bm r}  ( \log N - d \log m + \log \gamma_{\bm r} - 1 ) + O(1).
\end{align*}
In particular, using the fact that $\sum_{\bm r \in [m]^d} m^{-d} \bm \gamma_{\bm r} = \int_{[0,1]^d} \gamma = 1$, after accounting for all the terms we find that
\begin{align} \label{eq:xXy}
\frac{1}{N} \log \mathbf{P} ( \bar{\gamma}^{\bm \sigma_N}_{\bm r} = \gamma_r )  = - \int_{[0,1]^d} \gamma(s)\log \gamma(s)\mathrm{d}s + O(1/N).
\end{align}
The equation \eqref{eq:xXy} is the key idea in the proof of \ref{thm:KKRW} in the general $d \geq 2$. For the full technical details, see the appendix of \cite{KKRW}.


\end{document}